\newcommand{\R}{\mathbb{R}}
\crefname{Remark}{Remark}{Remark} % <- Preamble
\crefname{hypothesis}{Hypothesis}{Hypotheses}
\title{An Example Article\thanks{Submitted to the editors DATE.
\funding{This work was funded by the Fog Research Institute under contract no.~FRI-454.}}}
\author{Dianne Doe\thanks{Imagination Corp., Chicago, IL 
  (\email{ddoe@imag.com}, \url{http://www.imag.com/\string~ddoe/}).}
\and Paul T. Frank\thanks{Department of Applied Mathematics, Fictional University, Boise, ID 
  (\email{ptfrank@fictional.edu}, \email{jesmith@fictional.edu}).}
\and Jane E. Smith\footnotemark[3]}
\begin{document}
\title{A modified split Bregman algorithm for computing microstructure through Young measures}

\author{Gabriela Jaramillo \thanks{Department of Mathematics, University of Houston, Houston, TX (gabriela@math.uh.edu).}
\and Shankar C. Venkataramani \thanks{Department of Mathematics, University of Arizona, Tucson, AZ (shankar@math.arizona.edu).}}

\maketitle

% REQUIRED
\begin{abstract}
The goal of this paper is to describe the oscillatory microstructure that can emerge from minimizing sequences for nonconvex energies. We consider integral functionals that are defined on real valued (scalar) functions $u(x)$ which are nonconvex in the gradient $\nabla u$ and possibly also in $u$. To  characterize the microstructures for these nonconvex energies, we minimize the associated relaxed energy using two novel approaches: i) a semi-analytical method based on control systems theory, ii) and a numerical scheme that combines convex splitting together with a modified version of the split Bregman algorithm. These solutions are then used to gain information about minimizing sequences of the original problem and the spatial distribution of microstructure.
\end{abstract}

% REQUIRED
\begin{keywords}
 Split Bregman algorithm, microstructure, nonconvex energies, Young measures.
\end{keywords}

% REQUIRED
\begin{AMS}
  49J45, 65K10, 49J52.
\end{AMS}

\section{Introduction}

Macroscopic physical systems consist of large numbers of interacting (microscopic) parts, and are thus described by statistical mechanics \cite{greiner1995thermodynamics}.  A central tenet of statistical mechanics is that the equilibrium state, and the relaxation to equilibrium, are described by an appropriate free energy \cite{greiner1995thermodynamics}. Oftentimes the microscopic degrees of freedom ``self-organize" to spontaneously generate patterns and structures on mesoscopic scales \cite{glansdorff1971thermodynamic,gennes2003simple}. While the details differ, the free energies describing such spontaneous self-organization, a phenomenon also called {\em energy driven pattern formation} \cite{kohn2007energy}, have certain universal features {\em independent of the underlying physical system}. These include (1) nonconvexity of the free energy and the existence of  multiple (usually symmetry related) ground states for the system, and (2) regularization by a singular perturbation (``ultraviolet cutoff") to preclude the formation of structures on arbitrarily fine scales. These features are present in free energies that  describe many systems including liquid crystals \cite{Virga_Variational}, micro-magnetic devices \cite{DKMO:micromagnetics}, non-Euclidean elasticity \cite{efi} and solid-solid phase transitions \cite{kohn1994surface}. 

It is of great interest to develop methods that will lead to an understanding of 
microstructure in a variety of energy-driven systems. As an initial step towards this goal, in this paper, we consider an abstract and much simplified formulation given by the variational problem
\begin{equation}\label{e:original1}
 \min_A I[u] = \min_A \int_\Omega W(\nabla u) + V(x,u) \;dx \hspace{1.5cm} u \in A,
 \end{equation}
where $W(\xi)$ is a nonconvex potential, $V(x,u)$ is continuous in its arguments, and $u(x)$ is a real valued function in an admissible set, which we denote here by $A$. This energy is non-convex and thus has property (1) from above, but it is not regularized, so it does not have property (2). The functional is not, in general, lower semicontinuous, resulting in a lack of classical solutions as possible minimizers. Nonetheless, minimizing sequences for these problems encode useful information \cite{muller1999calculus}. These  minimizing sequences can exhibit finite-amplitude fine-scale oscillations, which in applications correspond to the emergence of microstructures. Indeed, our goal is to characterize spatially heterogeneous microstructures in the context of problems of the form \cref{e:original1}. 

One possible approach to analyze these problems is to consider their regularization via Young measures \cite{Young}. This means that we weaken the formulation through a generalized functional $\tilde{I}$ that depends on parametrized probability measures $\{\nu_x\}_{x \in \Omega}$ rather than on functions $u:\Omega\to\mathbb{R}$. The advantage now is that the Young measure minimizer $\nu_x$ of  $\tilde{I}$ captures the oscillations present in minimizing sequences of the nonconvex functional $I$ near a location $x$. In addition, the generalized functional $\tilde{I}$ is also related to the relaxation of the problem \cref{e:original1}, which is in turn given by  the quasiconvex envelope $\overline{I}$ of the original energy.  The connection between the three problems, the original nonconvex energy, the generalized functional, and the relaxation is given by a theorem by Pedregal \cite{pedregal1995} which states that the minimum of all these energies is the same, and provides a relation between the minimizing Young measure and the solution to the relaxed (quasiconvex) problem.  

The above discussion suggests a possible path for numerically computing microstructures:  Find solutions to the relaxed problem first, and then use Pedregal's theorem to infer the corresponding optimal Young measure. In the one dimensional case this process is straightforward since the quasiconvex envelope of the energy density coincides with its convex envelope. However, although this 1-d problem is easy to set up, the resulting energy density is often nonsmooth and this lack of smoothness is an impediment to computing minimizers.  In this work we present two methods for overcoming this difficulty and thus for finding optimal Young measures for regularized, 1-d, non-convex problems and indicate extensions to multi-dimensional problems.

The first method we present uses a generalized control Hamiltonian together with the Pontryagin Maximum Principle \cite{liberzon2012calculus} to find semi-analytic solutions. In addition, the control Hamiltonian also provides us with a means to check that solutions, found perhaps using a different approach, are indeed minimizers to the relaxed problem. 

Our second approach takes advantage of known algorithms in compressed sensing, where the energies are regularized by adding the (nonsmooth) $L^1$ norm. In particular, we use the split Bregman algorithm \cite{osher2005,goldstein2009} which is easy to code and provides fast convergence (see \cite{Bregman1966relaxation} for the initial formulation of the Bregman method to determine the joint feasibility of a collection of convex constraints,  and \cite{yang2010, goldstein2010, schaeffer6634, tran2015}
 for other applications of the split Bregman method). As in the original algorithm, our modified scheme also decouples the variable $u$ and its gradient $u_x$ via a constraint, allowing us to carry out the minimization in two steps. In the first step we use Gauss-Seidel to solve for the minimizers of the smooth component of our functional, while in the second step we use a proximal operator \cite{combettes2011proximal,parikh2013proximal} to minimize the non smooth component. In addition our scheme sets up the minimization problem through the associated gradient flow. This improves the stability properties of the variational equation associated with the smooth component of the energy functional, and also allows us to use convexity splitting in the case of problems with a nonconvex potential $V(x,u)$. 
 
We  note that while our numerical approach is novel, the idea of numerically minimizing the relaxed energy to find the optimal Young measure (and thus allowing us to understand microstructures) is not new. For energies defined over scalar valued functions, this concept was already exploited in the work of Nicolaides and Walkington \cite{nicolaides1993},  and expanded by Pedregal \cite{pedregal1995}.  In particular, Pedregal proved a relaxation theorem for the corresponding discretized problem, thus establishing a connection between the numerical solution of the relaxation and the optimal discretized measure. Moreover, he showed that for one dimensional problems with nonconvex potentials of the form used here, i.e. $W(\xi) = (\xi^2-1)^2$, the sequence of discretized Young measures converges to the true optimal measure if and only if the corresponding sequence of minimizer of the discretized relaxation converge strongly to the true solution \cite{pedregal1995}. 
 
The above results were later generalized to the case of vector valued functions by Roub{\'i}{\v c}ek, see for example \cite{roubicek1996}. In this paper the author uses the concept of Generalized Young measures (which is a larger class of measures that includes classical Young measures)  to develop a theory for non-quasiconvex problems. These results focus on integrands whose quasiconvexification is equivalent to their polyconvex envelope. This enables one to set up a relaxation of the problem, RP, and a corresponding discretization, RPd, via Finite Elements. The theory is also able to show existence of solutions to the discretized problem, $(u_d, \eta_d)$, with $u_d$ the minimizer of the relaxation and $\eta_d$ the corresponding generalized measure. Moreover, the author shows that the corresponding sequence of solutions converges to the solution of the relaxed problem, $(u,\eta)$ as the size of the mesh, $d$, goes to zero. Results that continue to build in this direction are in \cite{bartels2004, carstensen2000, chipot1991, kruzik1999, roubicek2011, roubicek2006}.
  
More generally, in higher dimensions the relaxation involves the quasiconvex envelope of the integrand, which is not always easy to find. For this type of problems it is possible to use instead a lower approximation to this object like the polyconvex envelope, or an upper approximation like the rank-one convex envelope \cite{muller1999calculus}. These notions are intimately related to the generalized functional and Young measures. For example, in terms of computational approaches, one can minimize the generalized functionals with additional constraints on the measure. Depending on these constraints one either finds minimizers of an approximate rank-one convexification, see \cite{nicolaides1993}, or as above, minimizers of the polyconvex envelope.

Alternatives to the Finite Element formulation used in the works cited above have also been developed to treat the more manageable case  of energies defined over real valued functions, i.e. $u: \Omega \subset \R^n \rightarrow \R$. Since in this case the measures are supported on a discrete set of points, they can be described as a convex combination of Dirac deltas, see \cite{meziat2006, pedregal1995} and others. This is connected to the fact that for real valued functions the different generalizations of convexity, i.e. rank-one convexity, polyconvexity, and quasiconvexity all coincide. In \cite{meziat2006, meziat2008} these ideas, together with the method of moments \cite{curto2000}, are used to derive an alternative approach for finding the optimal measure. The key point from these papers is that the relaxation can be written in terms of the moments of the measure and the minimization can be recast as a semidefinite programing problem.
 
  We also note that the more direct approach of computing minimizing sequences by directly optimizing the nonconvex energy, has a well developed theory, see \cite{luskin1996} for a review. Of course, with these methods it is not possible to obtain pointwise convergence of minimizers as the mesh is refined. Nonetheless, the results summarized in \cite{luskin1996, luskin1992}, and reference therein, guarantee that nonlinear functionals evaluated at these minimizers converge to the expected values of the probability measures that capture the asymptotic behavior of these solutions. In other words, as the mesh size goes to zero macroscopic quantities evaluated as limits along minimizing sequences. This allows one to compute the microstructure on a larger length scale than the physical length scale. Among the difficulties of this approach is that the mesh's orientation affects the size of the resulting microstructure. 

With the exception of the method of moments, most of the algorithms mentioned in the previous paragraphs treat nonconvex problems using Finite Elements. In contrast, our discretization of the relaxed problem is base on finite differences and a shrink-type operator to solve our minimization. This makes our algorithm very efficient and easy to implement. On the other hand, the disadvantage of our approach is that it does not carry over to energies defined over multivalued functions. 

{\bf Outline:} In the rest of this introduction we go over our notation and the assumptions we make.  In \cref{s:young} we recall key results that show that the relaxation of the functional $I[u]$ through Young measures is indeed given by $\overline{I}$. In \cref{s:controlH} we construct semi-analytic solutions to the relaxed functional using what is known as the control Hamiltonian. Finally in \cref{s:computations}  we describe our modified split Bregman algorithm. We defer the proofs of convergence of our algorithm to \cref{s:AppendixAA}.

{\bf Notation:}
$\Omega \subseteq \mathbb{R}^m$ is a $m$-dimensional domain. We set $\Omega =[a,b]$ except for our final example where $\Omega = [0,1]^2$. We take $u_0$ to be any function in $W^{1,p}(\Omega)$ (resp. \textit{BV}$(\Omega)$ )  that satisfies the desired boundary conditions. In addition, we will denote:
\begin{itemize}
\item The original problem as
\begin{equation}\label{e:original}
 \min_{A} I[u] = \min_A \int_\Omega W(\nabla u) + V(x,u) \;dx,
 \end{equation}
where $ A=\{ u \in W^{1,p}(\Omega) : u-u_0 \in W_0^{1,p}(\Omega)\}$.
\item The generalized problem as
\begin{equation}\label{e:generalized}
 \min_{\mathscr{A}} \tilde{I}[\nu,u] = \min_{\mathscr{A}} \int_\Omega \int_{\R^m} W(\xi) \;d\nu_x(\xi)  + V(x,u) \;dx ,
 \end{equation}
subject to the constraint $\nabla u = \int \xi d\nu_x(\xi)$ and $\mathscr{A} =$ set of all admissible parametrized measures $\nu=\{\nu_x\}$, see \cref{s:young}.
 \item The relaxed problem as
\begin{equation}\label{e:relaxation}
 \min_{A} \overline{I}[u]  =\min_A \int_\Omega \overline{W}(\nabla u) + V(u) \;dx ,
 \end{equation}
where again $A=\{ u \in W^{1,p}(\Omega) : u-u_0 \in W_0^{1,p}(\Omega)\}$ and $\overline{W}$ is the convex envelope of $W$.
\end{itemize}

{\bf Assumptions:} We also make the following assumptions.
\begin{hypothesis}\label{h:main}
 Let $p\geq 2$ and let $f(x,s,\xi)$ denote the integrand
\[ f(x,s,\xi) = W(\xi) +V(x,s).\]
Then:
\begin{enumerate}
\item The function $f(x,s, \xi)$ is a Carath\'eodory function. That is, $f$ is measurable in the variable $x$ and continuous on $(s,\xi)$.
\item Coercivity condition: There are constants  $M, K \geq 0$ and $\alpha >1$ such that
\[  f(x, s,\xi) \geq M |\xi|^\alpha -K.\]
\item Positivity and growth condition: There exists constants $\alpha_1 \in \R$, and  $\alpha_2, \alpha_3 \geq 0$, such that
\[ 0 \leq f(x,s,\xi) \leq \alpha_1+ \alpha_2 |s|^p + \alpha_3 |\xi|^p.\]
\end{enumerate}

\end{hypothesis}

\subsection{Young Measures}\label{s:young}

As we discuss above, our functional $I[u]$ is non-convex and the variational problem~\cref{e:original1} may not have solutions in the classical sense, that is solutions that belong to a Sobolev space. However, by enlarging the set of admissible functions to include solutions described by Young measures we are able to find minimizers for the generalized functional,
\[ \tilde{I}[\nu] = \int_\Omega \int_{\R^m} f(x,u, \xi) \;d\nu_x(\xi)  \;dx, \quad \Omega  \subset \R^m, \]
where the minimization is now over a set of admissible parametrized measures (defined on sets in $\R^m$), $\nu=\{ \nu_x\}_{x \in \Omega}$.  The optimal measure that minimizes the regularized problem is then related to minimizers of the relaxation, $\overline{I}[u]$. In this section we recall the definition of the relaxation, what it means to be an admissible parametrized measure, and state the relaxation Theorem from  Kinderlehrer and Pedregal \cite{kinderlehrer1991} which gives an explicit formula relating minimizers of both, the generalized and the relaxed problem. We then use this information to characterize optimal measures in the one dimensional case and give examples to consolidate all these ideas.

We start by describing the relaxation of a nonconvex functional. For a general minimization problem 
\[ \min_A I[u] = \min_A \int_\Omega f(x,u,\nabla u) \;dx, \quad \Omega \subset \R^m,\]
with integrand $f: \Omega \times \R^n \times \R^{n \times m} \rightarrow \R$, its relaxation is given by
\[ \min_A \overline{I} [u] = \min_A  \int_\Omega Qf(x,u, \nabla u) \;dx,\]
where $Qf$ represents the quasiconvexification of $f$. That is, for a.e. $x \in \Omega$ and for every $(u,\xi) \in \R^n \times \R^{n\times m}$, 
\[ Qf(x,u, \xi ) = \inf \left\{  \frac{1}{|D|} \int_D f(x,u,  \xi + \nabla \phi(y) ) \;dy: \phi \in W^{1,p}_0(D; \R) \right\}, \]
with $D\subset \R$ any bounded open set.
 \begin{Remark}\label{r:convex}
 Since we are working with functionals of real valued functions the quasiconvexification of $f(x,u, \xi)$ is the same as the convex envelope of $f(x,u, \xi)$  in the $\xi$ variable, \cite[Theorem 1.7 p. 10]{dacorogna2007direct}
 \end{Remark}
 
To characterize the set of admissible parametrized measures we first consider the following definition describing a class of parametrized measures.
\begin{definition}
A parametrized measure $\nu=\{ \nu_x\}$ is a $W^{1,p}$-parametrized measure if there is a sequence of gradients $\{ \nabla u_j\}$ such that:
\begin{itemize}
\item $|\nabla u_j|^p$ converges weakly in $L^1$ and  
\item for all $f \in X^p = \{ f \in C(\Omega) : | f(\xi)| \leq C(1 + |\xi|^p\}$ we have $f(\nabla u_j) \rightharpoonup \bar{f}$ in $L^1(\Omega)$ where
 \[ \bar{f}(x) = \int_{\R^{n \times m}} f(\xi) \; d\nu_x(\xi).\] 
\end{itemize}
\end{definition}

With this definition we can now describe the set of {\it admissible measures} $\mathscr{A}$, as the set of $W^{1,p}$-parametrized measures, $\nu$, generated by a sequence of gradients in $W^{1,p}(\Omega)$ subject to 
\[ \nabla u(x) = \int \xi \;d\nu_x(\xi),\qquad u- u_0 \in W_0^{1,p}(\Omega), \]
where $u_0 \in W^{1,p}(\Omega)$ satisfies the required boundary conditions.

Having defined the set $\mathscr{A}$, the characterization of the generalized problem is now complete. In addition, it is well known that if the integrand $f(u, \xi) $ satisfies the following growth conditions
\[ c(|\xi|^p - 1) \leq f(u,\xi) \leq C( 1+ |u|^p +|\xi|^p), \]
then the original problem, its generalization, and its relaxation, all have the same infimum:
\[ \inf_A I[u] = \inf_\mathscr{A} \tilde{I}[\nu] = \inf_A \overline{I}[u]. \]
Moreover, the following Theorem from Pedregal, see \cite{pedregal2012parametrized}, allows us to relate minimizers of $\overline{I}[u]$ to those measures in $\mathscr{A}$ that minimize $\tilde{I}$.

\begin{theorem}\label{t:pedregal}\cite[Corollary 4.6]{pedregal2012parametrized}
Let $\nu$ be a minimizer of $\tilde{I}$. If 
\begin{equation}\label{th:eq1}
 \nabla u(x) = \int_{\R^{n \times m}} \xi \;d\nu_x(\xi),\qquad a.e. \ x \in \Omega, \tag{*}
 \end{equation}
for $u \in W^{1,p}(\Omega)$, then $u$ is a minimizer of $\overline{I}$ and 
\begin{equation}\label{th:eq2}
Qf(x, u ,\nabla u) = \int_{\R^{n \times m}} f(x,u, \xi) \;d\nu_x(\xi), \qquad a.e.\  x \in \Omega. \tag{**}
\end{equation}
Conversely, if $u$ is  minimizer of $\overline{I}$ and $\nu$ is a $W^{1,p}$-parametrized measure such that \cref{th:eq1,th:eq2} hold, then $\nu$ is a minimizer of $\tilde{I}$.
\end{theorem}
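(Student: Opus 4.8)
The plan is to derive both implications from a single pointwise inequality of Jensen type, combined with the coincidence of the three infima recalled just before the statement. The inequality I would use is that for any admissible measure $\nu \in \mathscr{A}$ whose barycenter is $\nabla u(x) = \int_{\R^{n\times m}} \xi\,d\nu_x(\xi)$, one has
\[
 Qf(x,u,\nabla u(x)) \;\le\; \int_{\R^{n\times m}} f(x,u,\xi)\,d\nu_x(\xi)
 \qquad \text{for a.e. } x \in \Omega .
\]
Because we work with real-valued $u$, by \cref{r:convex} the quasiconvexification $Qf$ is simply the convex envelope of $f$ in $\xi$, so this is the classical Jensen inequality for the convex function $Qf(x,u,\cdot)$ together with the bound $Qf \le f$; in the general vector-valued case it is exactly the content of the gradient Young measure characterization of Kinderlehrer--Pedregal \cite{kinderlehrer1991}. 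Integrating this inequality over $\Omega$ gives $\overline{I}[u] \le \tilde{I}[\nu]$ for every admissible $\nu$ with barycenter $\nabla u$, and this is the only analytic input I need.

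For the forward direction, suppose $\nu$ minimizes $\tilde{I}$ and satisfies \cref{th:eq1}. Admissibility of $\nu$ guarantees $u \in A$ (its gradient is the prescribed barycenter and it meets the boundary condition), so integrating the Jensen inequality yields $\overline{I}[u] \le \tilde{I}[\nu]$. On the other hand, since $\nu$ is a minimizer, $\tilde{I}[\nu] = \inf_{\mathscr{A}} \tilde{I}$, which by the coincidence of infima equals $\inf_A \overline{I} \le \overline{I}[u]$. The chain $\overline{I}[u] \le \tilde{I}[\nu] = \inf_A \overline{I} \le \overline{I}[u]$ must therefore be a chain of equalities. Equality of the outer terms says $\overline{I}[u] = \inf_A \overline{I}$, i.e.\ $u$ minimizes $\overline{I}$. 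Equality $\overline{I}[u] = \tilde{I}[\nu]$ says the nonnegative integrand $\int f(x,u,\xi)\,d\nu_x(\xi) - Qf(x,u,\nabla u)$ has vanishing integral, hence vanishes a.e., which is precisely \cref{th:eq2}.

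For the converse, suppose $u$ minimizes $\overline{I}$ and $\nu$ is a $W^{1,p}$-parametrized measure satisfying \cref{th:eq1,th:eq2}. Then $\nu$ is admissible, since its barycenter is $\nabla u$ with $u - u_0 \in W_0^{1,p}$, and I would simply read off the identities
\[
 \tilde{I}[\nu] = \int_\Omega \int_{\R^{n\times m}} f(x,u,\xi)\,d\nu_x(\xi)\,dx = \int_\Omega Qf(x,u,\nabla u)\,dx = \overline{I}[u] = \inf_A \overline{I} = \inf_{\mathscr{A}} \tilde{I},
\]
where the second equality uses \cref{th:eq2}, the fourth uses minimality of $u$, and the last uses the coincidence of infima. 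Hence $\tilde{I}[\nu] = \inf_{\mathscr{A}}\tilde{I}$ and $\nu$ minimizes $\tilde{I}$.

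The main obstacle is the Jensen inequality of the first paragraph: in the vector-valued setting it is equivalent to the structural characterization of gradient Young measures (which measures $\nu$ are generated by gradients $\nabla u_j$ and the quasiconvexity duality they enforce), the genuinely deep ingredient packaged by the cited Corollary 4.6. Everything else is bookkeeping around the already-granted coincidence of the infima. In the scalar regime that the rest of the paper uses this obstacle largely dissolves, since quasiconvexity reduces to ordinary convexity (\cref{r:convex}) and the admissible measures are convex combinations of Dirac masses, for which Jensen is elementary. The only remaining care is measurability and integrability: one must check that $x \mapsto \int f(x,u,\xi)\,d\nu_x(\xi)$ is measurable and, using the coercivity and growth bounds of \cref{h:main}, that all the integrals above are finite, so that the ``zero integral of a nonnegative function implies a.e.\ vanishing'' step is valid.
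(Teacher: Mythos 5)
Your argument is correct, but note that the paper itself offers no proof of this statement: it is quoted verbatim as Corollary 4.6 of Pedregal's book, so there is nothing internal to compare against. What you have written is essentially the standard derivation of that corollary, and you have correctly isolated where all the real content lives: the pointwise Jensen-type inequality $Qf(x,u,\nabla u(x)) \le \int f(x,u,\xi)\,d\nu_x(\xi)$ for admissible measures with barycenter $\nabla u$, which in the vector-valued setting is equivalent to the Kinderlehrer--Pedregal characterization of gradient Young measures and in the scalar setting of this paper collapses to classical Jensen for the convex envelope. Granting that inequality and the coincidence of the three infima stated just before the theorem, both implications follow from the chain of (in)equalities exactly as you describe, with the a.e.\ identity coming from a nonnegative integrand with vanishing integral; the measurability and integrability caveats you raise are handled by the growth conditions in the standing hypotheses. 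The one point worth making explicit is that the function $u$ in the forward direction must actually lie in the admissible class $A$ (i.e.\ satisfy the boundary condition), which is built into the definition of $\mathscr{A}$ as you note; without that, minimality of $u$ over $A$ would not follow.
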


For the scalar case $n=1$, the Theorem gives us a method for determining the optimal measure $\nu$ from the minimizer $\overline{u}$, through the expression
\begin{equation}\label{e:mean}
 \overline{W}(\nabla\overline{u}(x)) = \int_{\R^m} W(\xi) \;d\nu_x(\xi), \quad a.e. \; x \in \Omega.
 \end{equation}
Where we used \cref{r:convex} to relate $QW$ to $\overline{W}$, the convex envelope of $W$. Notice as well that we made no assumptions on the function $V(x,u)$, so that these results are equally valid for functionals with potentials which are nonconvex in the variable $u$.

Our task for the rest of this section is to characterize more precisely those measures, $\nu$, that satisfy relation \cref{e:mean}. As shown in \cite{meziat2006}, for $m=n=1$, i.e scalar functions of one variable, it is enough to consider parametrized measures that can be described as the sum of at most two Dirac measures. This follows from the fact that the convex envelope of a function $f: \R \rightarrow \R$ is given by the function $f_e$ whose epigraph is the convex hull of the epigraph of $f$. Then by Carath\'eodory's theorem, any point  on the graph of the convex envelope, $(s, f_e(s))$, can be written as a convex combination of at most two points in the graph of $f$. In other words, one can find two numbers $p_1,p_2$, with $p_1+p_2=1$, and two points $s_1, s_2$ such that
\[(s, f_e(s)) = p_1(s_1, f(s_1)) + p_2(s_2, f(s_2)).\]
This is equivalent to requiring that the convex envelope $f_e$ satisfies
\[ f_e(s) = \int_\R f(\xi) \;d\mu(\xi),\]
where  $\mu$ is the probability measure with mean $s$ and described by $\mu = p_1\delta_{s_1} +p_2\delta_{s_2}$. This idea can also be extended to parametrized measures $\mu = \{ \mu_x\}$, so that for each $x$ we require
\[ f_e(s(x)) = \int_\R f(\xi) \;d\mu_x(\xi).\]
Consequently, the family of parametrized measures that satisfy the relation \cref{e:mean} can described at each $x$ as the sum of at most two Dirac measures. Form this result we can also infer regions of oscillatory behavior. For example, if for each $x$, the optimal measure is described by just one Dirac measure, i.e. $\mu_x = \delta_{u_x(x)}$, then the two problems, $\tilde{I}$ and $ \overline{I}$, are equivalent and the generalized solution is therefore just the function $\bar{u}$, which minimizes $\overline{I}$. On the other hand, if we find that for a particular interval the optimal measure is of the form $\mu = p_1\delta_{u_{x,1}} +p_2\delta_{u_{x,2}}$, then minimizing sequence exhibit oscillatory behavior. Moreover, the probabilities $p_1,$ and $p_2$ represent the fraction of this interval where gradient, $u_x$, is given by $u_{x,1}$ and $u_{x,2}$, respectively.

We end this section with an example that illustrates the ideas from above. Consider the well known Bolza problem
\[I[u] = \int_{-1}^1 (u_x^2-1)^2 + u^2 \;dx, \qquad u(-1) = u(1) =0.\]
It is easy to see that $\inf_u I[u] =0$ and saw-tooth functions with a vanishing amplitude and slopes alternating between $+1$ and $-1$ constitute a minimizing sequence. This sequence generates a Young measure $\mu_x = \frac{1}{2} \delta_1 + \frac{1}{2} \delta_{-1}$. The relaxed functional is 
\[\overline{I}[u] = \int \max(u_x^2-1,0)^2 + u^2 \; dx, \]
whose unique $W^{1,4}$ minimizer is $u = u_x = 0$. This allows us to conclude 
\begin{equation}\label{e:bolzameasure}
0 = \max(u_x^2-1,0)^2 = \int_\R W(\xi) \;d\mu_x(\xi), \quad 0 = u_x(x) = \int_\R \xi\; d\mu_x(\xi),
 \end{equation}
which immediately yields $\mu_x = \frac{1}{2} \delta_1 + \frac{1}{2} \delta_{-1}$, in agreement with the result from the minimizing sequence. Since $\mu_x$ is independent of $x$, the optimal Young measure is spatially homogeneous in this example. In this work, we develop methods that allow us to consider cases where the optimal Young measure $\mu_x$ {\em does depend} on $x$.

%%%%%%%%%%%%%%%%%%%%%%%%%%%%%%%%%%%%%%%%%%%%%%%%%%%%%%%%%%%%%%%%
%%%%%%%%%%%%%%%%%%%%%%%% CONTROL HAMILTONIAN%%%%%%%%%%%%%%%%%%%%%%%%%%%%%%
\section{Semi-analytic solution to the relaxation via the control Hamiltonian}\label{s:controlH}

In this section we describe a semi-analytical approach for finding minimizers of convex functionals of the form
\[ \overline{I}[u] = \int_\Omega \left[\overline{W}(u_x) + V(u)\right] \;dx, \qquad u-u_0 \in W^{1,p}_0(\Omega), \, \Omega \subset \R. \] 

 The approach comes from optimal control theory and the use of a {\it Control Hamiltonian} \cite{liberzon2012calculus}. In this section we will motivate the use of this method, which allows us to consider functionals or Lagrangians that are not smooth in the gradient $u_x$. This is not a new difficulty. Indeed, this is a feature of optimal control problems where one looks to maximize a revenue function, and where the set of admissible functions must also solve a dynamical system that depends on a time dependent control parameter. The goal is to not only find optimal trajectories, but to also find an optimal control parameter. In general these optimal solutions are not $C^1$, and this in turn implies that the revenue function, which depends on both the trajectory and the control, is also not a smooth function of these variables. As a result one cannot derive Euler-Lagrange equations or rewrite the system in Hamiltonian form.  Thus, to derive necessary conditions for the existence of optimal solutions one needs a more general theory that allows for non-smooth functionals. This is accomplished by the Pontryagin Maximum Principle \cite{Pontryagin2018the}, which provides necessary conditions for the existence of optimal trajectories and controls in terms of a {\it generalized Hamiltonian} \cite{liberzon2012calculus,sussmann1997}. Here the term generalized refers to the fact that this new Hamiltonian depends not only on the state variables and the control, but also on an additional variable called the {\it costate} \cite{liberzon2012calculus} that plays the role of a Lagrange multiplier. Moreover, with this method one makes no apriori  assumptions on the interdependence of these variables.

To make this idea more concrete consider our problem (in Lagrangian  form) \cref{e:original1} with $V(x,u) = V(u)$, and assume for the moment that the Lagrangian is smooth,
\begin{equation}\label{e:minimization}
 I[u] =  \int_a^b L(u,u_x) \;dx. 
 \end{equation}

From the classical theory, the two necessary conditions for a minimizer, $u: [a,b] \rightarrow \R$, of this functional to exist are that the first variation of this functional is equal to zero, i.e. $\dfrac{\delta I}{\delta u} =0$, and that its second variation is positive, i.e. $\dfrac{\delta^2 I}{\delta u^2}\geq 0$. The first condition leads to the Euler-Lagrange equations
\[ \frac{d}{dx} \left(\frac{\partial L}{\partial v}\right)  = \frac{d L}{du},\]
while the second condition can be expressed in terms of the Hessian of $L$, $ \displaystyle \frac{d^2 L }{dv^2} \geq 0.$

We also have an alternative formulation for the first condition via the Hamiltonian. Using the generalized momentum $p = \dfrac{\partial L}{\partial v}$, which is well defined since we are assuming for now that $L$ is smooth in $v$, one can write
\[ \mathcal{H}(u,p) = p \cdot v(u,p) - L(u,v(u,p)).\]
In this formulation the variable $v$ is defined implicitly through the equation for the generalized momentum and is viewed as a function of $u$, and $p$, i.e $v = v(u,p)$. The Euler Lagrange equations can then be expressed as a first order system
\[ u' = \frac{\partial \mathcal{H}}{\partial p} ; \quad p' = - \frac{\partial \mathcal{H}}{\partial u}. \]

The key insight from control theory is that we do not have to make the assumption that $v$ can be expressed as a function of $u$ and $p$. Rather, it is more natural to consider the Hamiltonian $H(u,v,p)$ as a function of these three {\em independent} variables and derive the generalize momentum equation as a necessary condition for the existence of minimizers. Indeed, this is precisely the content of the Pontryagin Maximum Principle, which we paraphrase in this next theorem (see also \cite{liberzon2012calculus,sussmann1997}) --

\begin{theorem}\label{t:controlH}
Given the minimization problem \cref{e:minimization}, define the associated control Hamiltonian as
\[ H(u,v,p) = p\cdot v - L(u,v).\]
If a curve $x \mapsto u(x)$ is a solution to \cref{e:minimization}, then there exists a function $x \mapsto p(x)$ such that the following conditions hold for all $x \in [a,b]$
\begin{enumerate}[i)]
\setlength \itemsep{1.5ex}
\item $ u'(x) = \displaystyle \frac{\partial H}{\partial p}( u,u',p)$
\item $ p'(x) = -\displaystyle \frac{\partial H}{\partial u} ( u, u',p)$
\item $ H(u,u',p) = \max_v H(u,v,p)$
\end{enumerate}
\end{theorem}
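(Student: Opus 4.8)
The plan is to read \cref{t:controlH} as the special case of the Pontryagin Maximum Principle for the control system $u'=v$ with running cost $L(u,v)$ and fixed endpoints, and to obtain the three conditions from variational arguments of two distinct kinds: weak (smooth) variations for i)--ii), and strong (needle) variations for iii). Crucially, the costate $p$ will be \emph{constructed} as the solution of an adjoint equation rather than assumed to exist, so that no differentiability of $L$ in the control variable $v$ is required.

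First I would recast the problem by adjoining the constraint $u'=v$ with a multiplier $p(x)$, forming $\tilde{I}[u,v,p]=\int_a^b [L(u,v)+p(u'-v)]\,dx=\int_a^b[pu'-H(u,v,p)]\,dx$, and integrating by parts to move the derivative off $u$. Stationarity of $\tilde{I}$ in $p$ returns the constraint, which is exactly condition i) since $\partial H/\partial p=v$; stationarity in $u$ under variations $\delta u$ vanishing at $a,b$ (the endpoints of $u$ are fixed because $u-u_0\in W^{1,p}_0$) produces, after the fundamental lemma of the calculus of variations, the adjoint equation $p'=-\partial H/\partial u$, which is condition ii). In the smooth case this reproduces the Euler--Lagrange equation once $p$ is identified with the momentum $\partial L/\partial v$, but writing it as an adjoint ODE lets me define $p$ directly as its solution, with the boundary term from the integration by parts fixing the admissible endpoint values of $p$.

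The heart of the argument, and the step I expect to be the main obstacle, is condition iii): the global maximality of $v\mapsto H(u,v,p)$. This cannot come from the weak variations above, both because it is a strong, Weierstrass-type condition and because $L$ is allowed to be nonsmooth in $v$ (the motivating feature of the whole section). Here I would use a needle variation: at a Lebesgue point $\tau$ replace $u'$ by an arbitrary value $w$ on $[\tau-\eps,\tau]$ and let the state respond through $u_\eps'=v_\eps$. The resulting state perturbation is $\rmO(\eps)$ and is constant after $\tau$; I would propagate it to $x=b$ using the linearized state equation and then cancel the leftover endpoint contribution against the boundary term using the adjoint variable $p$ from condition ii). What survives at first order is $\eps\,[H(u(\tau),u'(\tau),p(\tau))-H(u(\tau),w,p(\tau))]$, and minimality of $u$ forces this quantity to be nonnegative for every $w$, which is precisely condition iii).

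The genuine technical difficulty is the bookkeeping in this last step: keeping the competitor admissible (both endpoints of $u$ are fixed) by correcting the net endpoint shift created by the needle, while showing that this correction contributes only at order $o(\eps)$ once weighted by $p$, and justifying the $\rmO(\eps)$ expansion of the cost at a Lebesgue point of $u'$ without any smoothness of $L$ in the control. I would handle the endpoint correction either by spreading a compensating smooth variation over $[a,\tau-\eps]$, whose first-order cost is captured exactly by the adjoint, or by selecting the transversality condition on $p$ so that the propagated perturbation drops out; both routes reduce condition iii) to the pointwise inequality above. With iii) in hand, conditions i)--ii) are routine, so the real content of the theorem is concentrated in this maximization principle.
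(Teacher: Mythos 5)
The paper does not actually prove \cref{t:controlH}: it is stated explicitly as a paraphrase of the Pontryagin Maximum Principle, with the proof deferred to the control-theory literature (Pontryagin et al., Liberzon, Sussmann--Willems). Your proposal is therefore doing strictly more work than the paper --- you are sketching the classical proof of the PMP in the calculus-of-variations special case ($u'=v$, control set all of $\R$, fixed endpoints): weak variations for i)--ii) and needle variations for the maximality condition iii). That is the right decomposition, and your identification of iii) as the genuine content (a Weierstrass-type condition that cannot follow from weak variations alone) matches how the theorem is actually used in \cref{s:examples}, where condition iii) is what forces $p(x)\in\partial_v \overline{W}(u'(x))$ and hence determines the jump structure of $v$.

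One soft spot to flag. You assert that constructing $p$ from the adjoint ODE means ``no differentiability of $L$ in the control variable $v$ is required,'' but your needle argument still needs a first-order expansion of the cost of the \emph{compensating} variation $\eta$ that restores the right endpoint, and that expansion involves the term $\int L_v(u,u')\,\eta'\,dx$ (or an integration by parts that identifies $p$ with $\partial L/\partial v$ along the trajectory). If $L$ is genuinely nonsmooth in $v$ this term is not defined as stated; you need either (a) convexity of $L(u,\cdot)$ --- which holds for the relaxed Lagrangian $\overline{W}(v)+V(u)$ actually used in the paper --- so that one-sided directional derivatives exist and the argument runs with subgradients, yielding $p(x)\in\partial_v L(u(x),u'(x))$ a.e.\ (the du Bois--Reymond inclusion that also fixes the integration constant in $p$), or (b) a variation scheme (e.g.\ paired needles of opposite sign at two Lebesgue points) whose net endpoint shift vanishes identically, so no smooth compensator is needed. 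Without one of these, the cancellation ``against the boundary term using the adjoint variable'' is circular, since the identification of $p$ with the $v$-derivative of $L$ is exactly what is unavailable. This is a repairable gap, not a wrong approach; with the convexity route it closes cleanly for every example treated in the paper.
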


Notice that the first two conditions are just a reformulation of the Euler Lagrange equations, and that the last condition can also be expressed as 
\[ \frac{\partial H}{\partial v} = 0; \quad \frac{\partial^2H}{\partial v^2} \geq 0.\]
Moreover, in the case of smooth $L$ this last equation is equivalent to $p = \dfrac{\partial L}{\partial v}$, while the last inequality is the statement $\dfrac{\partial^2 L}{\partial v^2} \leq 0$. In other words, we recover the two necessary conditions for the existence of a minimization based on the first and second variations of $I[u]$. Finally, note that, for a Lagrangian $L(u,u')$, 
the Hamiltonian is a conserved quantity, i.e $\dfrac{d}{dx} H(u(x),u'(x),p(x)) =0$ along solutions.

For us, the principal advantage of using the Pontryagin Maximum Principle is that it allows us to relax the assumption on the smoothness of the Lagrangian $L(u,v)$, by dropping the requirement that $L$ is a smooth function of $v$, while still providing us with a set of conditions for solving the original minimization problem \cref{e:minimization}.

%%%%%%%%%%%%%%%%%%%%%%%%%%%%%%%%%%%%%%%%%%%%%%%%%%%%%%%%%%%%%%%%%%%%%%%%%%%%%%%%%%
%%%%%%%%%%%%%%%%%%%%%%%%%%EXAMPLES CONTROL%%%%%%%%%%%%%%%%%%%%%%%%%%%%%%%%%%%%%%%%%%%%%%%

\subsection{Examples}\label{s:examples}

In the rest of this section we illustrate the Pontryagin Maximum Principle with two examples. We refer to the solutions obtained through this method as \emph{semi-analytic} solutions, since in order to arrive at a complete description of minimizers of the relaxed problem $\overline{I}[u]$ we must numerically solve a 
system of ODEs. To tie these results to our previous discussion, we use these solutions to infer the optimal parametrized measure for the corresponding generalized problem $\tilde{I}[\nu]$.

\textit{ Example 1:} Consider the following Bolza problem
\[ I[u] = \int_0^1 (u_x^2-1)^2_* + u^2 \; dx \qquad u(0) = 0, \, u(1) = 1/2,\]
where the potential $(v^2-1)^2_*$ and its convex envelope $(v^2-1)^2_+$ are given by
\[(v^2-1)^2_*= \left \{ \begin{array}{c c c}
\infty & \mbox{for} & v<0,\\[2ex]
(v^2-1)^2 & \mbox{for} & v \geq 0,
\end{array} \right.
\]
\[
 (v^2-1)^2_+= \left \{ \begin{array}{c c c}
\infty & \mbox{for} & v <0,\\[2ex]
1- \frac{4}{3} \sqrt{\frac{2}{3}}v & \mbox{for} & 0\leq v < \sqrt{\frac{2}{3}},\\[3ex]
(v^2-1)^2 & \mbox{for} & v \geq \sqrt{\frac{2}{3}}.
\end{array} \right.
\]

\textit{Semi-analytic solution:} The control Hamiltonian can be written as
\[ H(u,v,p) = pv - (v^2-1)^2_+ - u^2,\] 
and the three conditions in \cref{t:controlH}  take the form of
\begin{equation}\label{e:ode}
 u' = v, \qquad p' = 2u,
 \end{equation}
 \begin{equation}\label{e:maxH}
\frac{\partial H}{\partial v}= p - \frac{\partial}{\partial v} (v^2-1)^2_+ =0, \qquad \frac{\partial^2 H}{\partial v^2} =- \frac{\partial^2}{\partial v^2}(v^2-1)^2_+\leq 0.
\end{equation}

The Hamiltonian, $H$, is not $C^2$ in $v$, but nonetheless $\frac{\partial^2 H}{\partial v^2}\leq 0$ in the sense of distributions. The requirement $\frac{\partial H}{\partial v}=0$ provides us already with a formula for the costate function, $p$, in terms of $v$, which we can then use to write the Hamiltonian in a more useful form:

\[ p(v) = \left \{ \begin{array}{c c c}
\infty & \mbox{for} & v<0,\\[2ex]
-\frac{4}{3}\sqrt{\frac{2}{3}}& \mbox{for} & 0\leq v < \sqrt{2/3},\\[2ex]
4v(v^2-1) & \mbox{for} & v \geq \sqrt{2/3},
\end{array} \right.\]

\[H(u,v,p(v))  = \left \{ \begin{array}{c c c}
\infty & \mbox{for} & v<0,\\[2ex]
-(1+u^2) & \mbox{for} & 0\leq v < \sqrt{2/3},\\[2ex]
3v^4-2v^2-1-u^2 & \mbox{for} & v \geq \sqrt{2/3}.
\end{array} \right.\]

To find the minimizer $u:[0,1] \rightarrow \R$, one can work out that the solution $(u,v,p)$ to \cref{e:ode,e:maxH} that satisfies the boundary conditions, $u(0) = 0$ and $u(1)=1/2$, must have $v(0) =0$. If this were not the case then $p' \neq 0$, which from the expression for $p(v)$ implies that $ v\geq \sqrt{2/3} >0.5$ forcing $u(1) >1/2$. 

Since trajectories travel along level sets of $H$ then  $$H(u(x),v(x),p(v(x))) = H(0,0,p(0)) = -(1+u(0)^2)=-1.$$ Using \cref{e:ode} and the definition for $H$ we infer that $u(x)=v(x)=0$. However, this solution does not satisfy the second boundary condition $u(1)=1/2$, so at some point $x =x^*$ the value of $v$ must jump to $v(x^*)\geq \sqrt{2/3}$. One can again use the fact that the Hamiltonian is a conserved quantity to find that $v(x^*) = \sqrt{2/3}$. Notice that for $v\geq \sqrt{2/3}$ the costate $p$ satisfies $p' = (12v^2-4)v' =2u$, so that we can use the values $u(x^*)=0, v(x^*) = \sqrt{2/3}$ as initial conditions of the dynamical system
\[ u' =v, \quad v' = \frac{u}{6v^2-2}.\]
Finally, to find $x^*$ we integrate this system and require that $u(1)=1/2$. This can be done numerically giving $x^* = 0.4039$. 

If we denote  the solution to the dynamical system by $u^*$, we see that the solution, $\bar{u}$, to the relaxed functional is given by,
\[ \bar{u} = \left \{ \begin{array}{c c c}
0 & \mbox{for} & 0< x< 0.4039,\\
u^*(x) & \mbox{for} & 0.4039\leq x \leq 1.
\end{array} \right.
\]
A plot of the solution is given in \cref{fig:sol1}. A direct computation of the energy of $\bar{u}$ shows that $\inf_{u \in A} I[u] = \overline{I}[\bar{u}] \approx 0.505445$.

\begin{figure}[h] 
   \centering
   \includegraphics[width=2.5in]{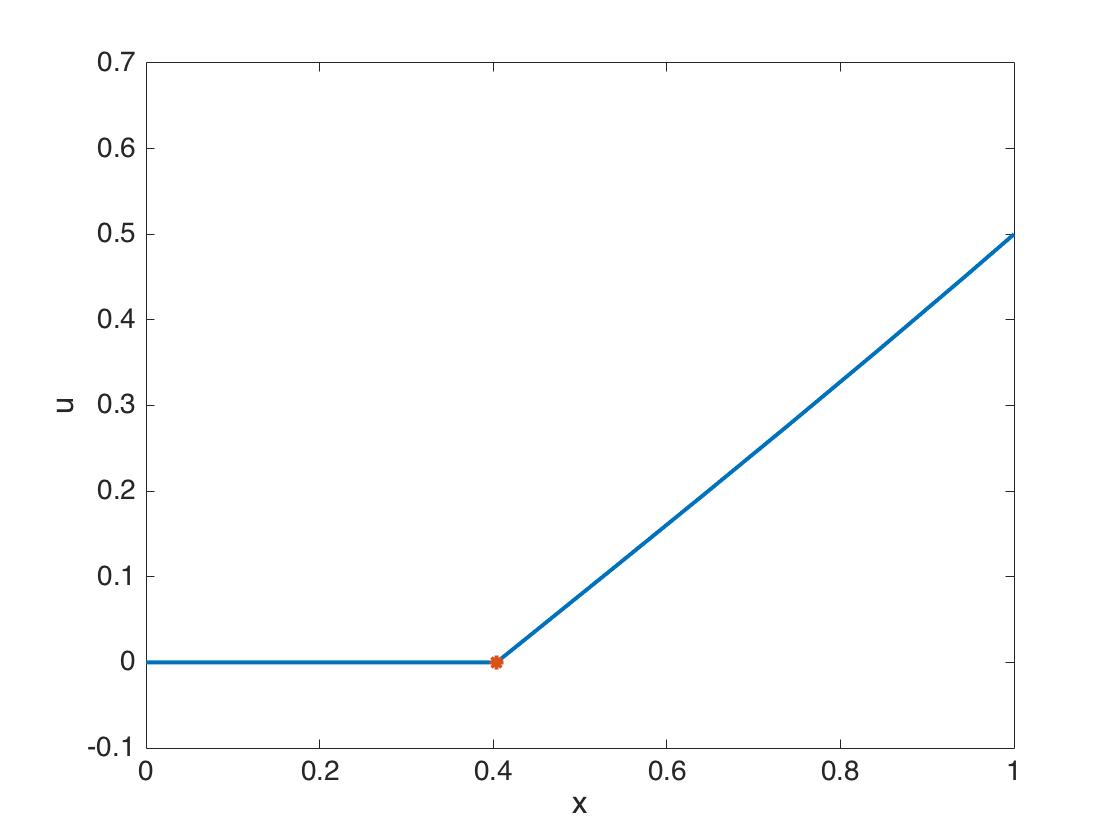} 
   \caption{Minimizer found using the control Hamiltonian for the relaxed problem in Example 1.}
   \label{fig:sol1}
\end{figure}
\begin{remark}

This example was also presented in \cite{meziat2006}, where the authors use a different method for finding minimizers of the relaxation. Starting from the generalized functional in terms of Young measures, they obtain its relaxation by rewriting this integral in terms of the moments of the measure. This leads to an optimization problem that seeks to minimize the relaxed functional over all possible vectors representing the moments of the measure, subject to a matrix inequality that guarantees that the moments come from a non-negative probability measure. Their method leads to the following solution
\[ u(x) = \left\{ \begin{array}{c c c}
0 & \mbox{for} & 0 \leq x \leq 2/5,\\
\frac{5}{6}x - \frac{1}{3} & \mbox{for} & 0< x \leq 1,
\end{array} \right. \]
which is not as precise as our result. Indeed, from the Pontryagin Maximum Principle we know that the Hamiltonian is a conserved quantity. Based on the initial conditions $u(0) = v(0) =0$, we know that the solution must be in the level set $H=1$. A short calculation shows that the solution obtained in \cite{meziat2006} does not stay on this level set.
\end{remark}

\textit{ Young measure result:} We now relate the semi-analytic results to the optimal parametrized measure of the generalized problem,
\[ \tilde{I}[\mu] = \int_0^1 \int_\R (\xi^2-1)^2_* \;d\mu_x(\xi)+ u^2 \; dx, \qquad u(0) = 0 \;u(1) = 1/2.\]

From \cref{t:pedregal}, we know that given a solution, $\bar{u}$, to the relaxed problem, $\bar{u}_x \geq 0$ and the optimal parametrized measure, $\mu$, satisfies $\overline{W}(u_x) = \int W(\xi) \;d\mu(\xi)$. Therefore, for this example the optimal measure is given by 
 \[ \mu_x = \left \{ \begin{array}{c c c}
\lambda(x) \delta_0 + (1- \lambda(x)) \delta_{a} & \mbox{for} & 0\leq u_x(x) < a,\\
\delta_{u_x(x)} & \mbox{for} & a\leq u_x(x),
\end{array} \right.
\]
where $a= \sqrt{2/3}$ and $\lambda(x) = \dfrac{a-u_x(x)}{a}$.

In addition, the optimal parametrized measure satisfies $\bar{u}_x = \int \xi \;d\mu(\xi)$. Since the derivative of $\overline{u}(x)$ is zero on the interval $x \in [0,0.4039)$, for these values of $x$ the measure $\mu_x = \delta_0$. On the other hand, on the interval $x \in [0.4039,1]$ the derivative satisfies $\overline{u}_x(x)> \sqrt{2/3}$ so that for these values of $x$ the measure $\mu_x = \delta_{\bar{u}_x}$. Since the optimal parametrized measured, $\mu_x$, are Dirac measures at each $x$, the solution to the relaxed problem, $\overline{u}$, is also a classical solution to the original problem $I[u]$. For this functional, we can conclude that minimizing sequences do not develop fine-scale oscillations with a nonvanishing amplitude.

%%%%%%%%%%%%%%%%%%%%%EXAMPLE 2 %%%%%%%%%%%%%%%%%%%%%%%

\textit{Example 2:} Consider the fully nonconvex Bolza problem,
\[ I[u] = \int_{-1}^1 (u_x^2-1)^2 + (u^2-1)^2\;dx, \qquad u(-1) =0,\quad u(1) =0.\]
Some natural test functions to consider are $u_0(x) = 0$ and $u_{\pm}(x) = \pm (1 - |x|)$ which satisfy $|u'_{\pm}|^2 = 1$ a.e. A direct computation shows that $I[u_0] = 4, I[u_{\pm}] = \frac{16}{15}$.

As before, we want to define a relaxation $\overline{I}$ such that $\min_u \overline{I}[u] = \inf_u I[u]$ and the minimizer of $\overline{I}$ encodes information about the optimal Young measure. Define $G[u]$ as the largest convex functional $\leq I$, it follows that --
\begin{enumerate} 
\item $G$ is coercive, since $ I[u] \geq \frac{1}{2}\int_{-1}^1 (u_x^4 + u^4) \; dx - 6,$
is a bound from below by a convex, coercive function. 
\item $I[u] = I[-u]$ and the maximum of two convex functions is convex, so $G[u] \geq \max(G[u],G[-u])$ and  $G[-u] \geq \max(G[u],G[-u])$ implying $G[u] = G[-u]$.
\item $u_0(x) = 0$ is a global minimum. Indeed, if $u_n$ is a minimizing sequence, so is $-u_n$ and by convexity $G[u_0] \leq \liminf_n \frac{1}{2}\left( G[u_n] + G[-u_n] \right)$. In particular, $u_0 = \frac{1}{2}u_+ + \frac{1}{2} u_-$ implies that $G[u_0] \leq \frac{1}{2} (I[u_+] + I[u_-]) = \frac{16}{15}$. 
\item If $u_n$ is any sequence (possibly with oscillatory microstructure) with uniformly bounded energy $I[u_n] \leq C$, that converges weakly to $u_0$, it follows from the compactness of the Sobolev embedding $W^{1,4}([-1,1]) \to L^4 ([-1,1])$ that we can extract a subsequence (not relabelled) $u_n \to u_0$ in $L^4$ implying that $\liminf_n I[u_n] \geq \int (u_0^2-1)^2 \;dx = 2 > \frac{16}{15} \geq G[u_0]$. 
\end{enumerate}

This argument shows that, the convex envelope of $I$ is {\em not the right object} to capture the limiting energy for weakly convergent sequences. There is a gap between $\liminf_n I[u_n]$ and $G(u_0)$ for sequences $u_n \rightharpoonup u_0$. This argument also suggests that we should compute the lower semi-continuous envelope with respect to weak convergence in $W^{1,4}$, and this functional is given by the partial convexification \cite[Theorem 1.7]{dacorogna2007direct}
\[ \overline{I}[u] = \int_{-1}^1 (u_x^2-1)^2_+ + (u^2-1)^2\;dx, \qquad u(-1) = 0,\, u(1) =0,\]
where we now define
\[ (v^2-1)^2_+= \left \{ \begin{array}{c c c}
0 & \mbox{for} & |v|<1,\\[2ex]
(v^2-1)^2 & \mbox{for} & |v| \geq1.
\end{array} \right.
\]
\textit{Semi-analytic solution:} The relaxed functional is not convex in $u$ and we do not expect to find unique minimizers. Nonetheless, we can write down the control Hamiltonian 
\[ H(u,v,p) = pv - (v^2-1)^2_+ - (u^2-1)^2,\]
and use \cref{t:controlH} to find the necessary conditions that lead to solutions:
\[ u' = v, \quad p' = 4u(u^2-1),\]
\[ \frac{\partial H}{\partial v} = p - \frac{\partial}{\partial v}(v^2-1)^2_+ =0, \qquad \frac{\partial^2 H}{\partial v^2} = -\frac{\partial^2}{\partial v^2}(v^2-1)^2_+ \leq 0.\]
As in the previous example the last condition is always satisfied (distributionally), while the requirement $\frac{\partial H}{\partial v} =0$ gives a formula for the costate function, $p$, in terms of $v$. This allows us to write the Hamiltonian in terms of $u$ and $v$,
\[ H(u,v,p(v) ) = \left \{\begin{array}{c c c}
 -(u^2-1)^2 & \mbox{for} & |v| <1,\\
 (v^2-1)(3v^2+1) - (u^2-1)^2 & \mbox{for} & |v| \geq 1.
 \end{array} \right.
 \]

There are two cases depending on the value of $v$ at the point $x =-1$.  If initially we assume that $|v(-1)|<1$, then the Hamiltonian 
$$H(u(-1),v(-1),p(v(-1))) = -(u(-1)^2-1)^2=-1.$$ Because the Hamiltonian is a conserved quantity, to stay on the level set $H=-1$ we need $v\equiv 0$. This corresponds to the trivial solution $u_0 = 0$ which has energy $\overline{I}[u_0] = 2$. 

If on the other hand $|v(-1)|\geq 1$ then $p' = (12v^2-4) v' = 4u(u^2-1) $, leading to the following dynamical system,
\[ u' = v, \quad v' = \frac{u(u^2-1)}{3v^2-1}.\]
Notice that this is a reversible system, so that if $(u(x),v(x))$ is a solution, then so is $(u(-x),-v(-x))$. 

Here again we have two options, $v< -1$ or $1<v $. In the case when $u' = v(-1)>1$ the function $u(x)$ must be initially increasing. So, there is a point $x^*$ where $u(x^*) =1$ and therefore $v'(x^*)=0$. 

To find the location of $x^*$ we notice that because the value $|v|\geq 1$, the derivative $u' \geq 1$. Integrating $u'$ from $x=-1$ to $x=x^*$ shows that $x^*$ is less than zero. Since the dynamical system is reversible, the solution is even with respect to the $x-$axis. This implies that the solution must satisfy $u=1$ and $v=0$ on the interval $(x^*,0]$ and that for values of $x \in (0,1]$ the solution must mirror what happens in the interval $[-1,0)$, allowing $u$ to satisfy the boundary  condition at $x=1$. In addition, since $u=1$ and $v=0$ on $(x^*,-x^*)$ the solution must lie on the level set $H=0$ and because we jump to values of $|v|\geq 1$, at $x=x^*$ we must have that $v(x^*) = 1$ and $u(x^*)=1$.

 To find the value of $x^*<0$ and the solution on the interval $[-x^*,1]$ we can integrate the above equations using the change of coordinates $y = x-x^*$ together with the initial conditions $u(y=0)=1$ and $v(y=0)=1$ and stopping as soon as $u(y^*)=0$. With this process we find numerically that $x^* = -0.0529$. 
 
 If we denote  the solution to the dynamical system by $u^*$, we can say that the solution, $\bar{u}$, to the relaxed functional is given by,
\begin{equation}\label{e:sol}
 \overline{u}(x)  = \left \{ \begin{array}{c c c}
u^*(-x) & \mbox{for} & -1\leq x \leq x^*,\\
1 & \mbox{for} & x^*< x< -x^*,\\
u^*(x) & \mbox{for} & -x^*\leq x \leq1,
\end{array} \right. 
\end{equation}
where $x^* \approx -0.0529$ and $u^*$ satisfies $|u^*_x|\geq 1 $. A plot of $\bar{u}(x)$ is shown in \cref{fig:sol3}. Computing the energies of $\bar{u}, u_{\pm}$ and $u_0$ yields 
\[
\inf_{u \in A} I[u] = \overline{I}[\bar{u}] \approx 1.0241 < \overline{I}[u_{\pm}] = I[u_{\pm}] = \frac{16}{15} < \overline{I}[u_0] = 2.
\]

\begin{figure}[h] 
   \centering
   \includegraphics[width=2.5in]{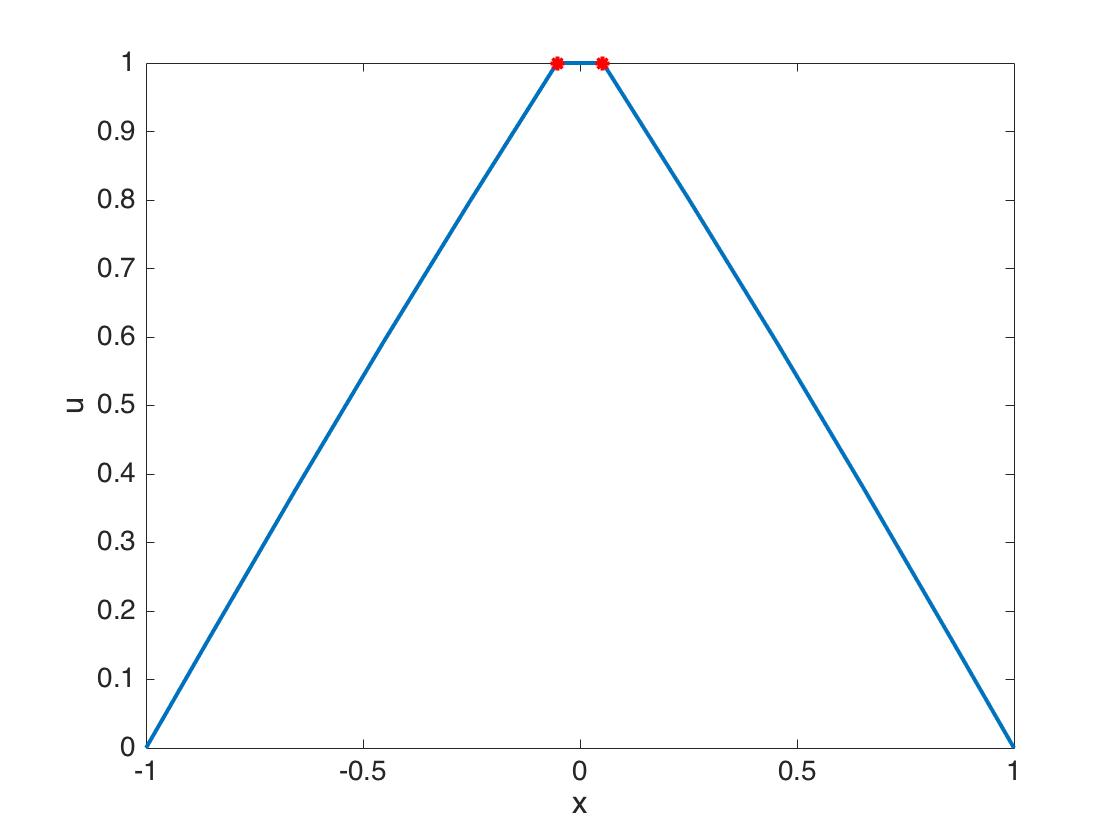} 
   \caption{Minimizer for the relaxed problem in Example 2 from the control Hamiltonian}
   \label{fig:sol3}
\end{figure}
For the second case when $v<-1$, the argument is very similar as the one presented above. The solution in this case is just $-u^*(x)$.

\textit{Young measure:} We now continue by relating the semi-analytic result given by \cref{e:sol} to the generalized functional,
\[ \tilde{I}[\mu] = \int_{-1}^1 \int_\R (\xi -1)^2 \;d\mu_x(\xi) + (u^2-1)^2\;dx \quad u(-1) =0,\; u(1) =0.\]
We know that the optimal parametrized measure must satisfy $$\overline{W}(u_x) = \int W(\xi) \;d\mu_x(\xi),$$ leading to
\[ \mu_x = \left \{ \begin{array}{c c c}
\lambda(x) \delta_{1}+ (1-\lambda(x)) \delta_{-1} & \mbox{if} & |u_x| < 1,\\
\delta_{u_x} & \mbox{if} & |u_x|\geq 1,
\end{array} \right. \]
where $\lambda(x) = \displaystyle \frac{ 1+ u_x}{2}$. Since the optimal measure must also satisfy  $\bar{u}_x = \int \xi \;d\mu(\xi)$, we look at the solution to the relaxed problem we found above.

First notice that for all $ x\in [-1,x^*] \cup [-x^*,1]$ the derivative $|\bar{u}'(x)| >1$, implying that  $\mu_x= \delta_{\bar{u}'(x)}$ on these intervals. On the other hand, for $ x\in (x^*,-x^*)$ we have that $\bar{u}'(x) =0$ and as a result $\mu_x = \frac{1}{2} \delta_{-1} + \frac{1}{2} \delta_{1}$ and we may conclude that minimizing sequences exhibit oscillations on this interval.

%%%%%%%%%%%%%%%%%%%%%%%%%%%%%%%%%%%%%%%%%%%%%%%%%%%%%%%%%%%%%%%%
%%%%%%%%%%%%%%%%%%%%%%%% SPLIT BREGMAN %%%%%%%%%%%%%%%%%%%%%%%%%%%%%%

\section{Computing the relaxation numerically}\label{s:computations}
While the semi-analytic method from the previous section is fast and very accurate, it is not robust and only applies to problems with special structure. In this section, we propose a robust, problem-independent, numerical scheme for finding minimizers of a (potentially) non-smooth relaxed energy. For notational convenience we reformulate the relaxed variational problem in a more compact form,
\begin{equation}\label{e:relaxed}
\mbox{minimize } \overline{I}[u] := \overline{\mathcal{W}}[u_x] + \mathcal{V}[u], \qquad \mbox{ subject to } u-u_0 \in W^{1,p}_0(\Omega),
 \end{equation}
where  $\overline{\mathcal{W}}[d] = \int_\Omega \overline{W}(d) dx$, and $\mathcal{V}$ is defined analogously. 

We pose the minimization of this functional as a gradient flow problem and look for steady solutions of  $u_t = -\dfrac{\delta \overline{I}}{\delta u}$. This will speed up the convergence of our algorithm, and more importantly it will allow us to incorporate a convex splitting scheme in order to treat the case when the potential $V[x,u]$  is nonconvex. In this latter case, we have to keep in mind that we will be finding local minimizers of $\overline{I}$.

To solve the gradient flow problem we use a modified version of the split Bregman algorithm. Using known properties of this scheme \cite{goldstein2009,osher2005}, we show in \cref{s:AppendixAA} that our algorithm convergences to a minimizer of the discretized relaxed problem. Then, a similar perturbation argument as in \cite{pedregal1996} shows that as the size of the mesh, $h$, goes to zero, the sequence of approximations $u_h$ converges strongly to a minimizer of the relaxed problem. In particular, this means that the solution to the relaxed problem and therefore its associated Young measure is a good approximation of the true optimal measure of the generalized problem, giving a good approximation for the location of microstructures.

We emphasize again that our goal is to use the solutions of the relaxed problem to infer the corresponding Young measure and consequently the location of microstructures. In \cref{s:SBalgorithm} we first review the examples from \cref{s:controlH} and find excellent agreement between the semi-analytic results and the numerical approximations computed using our algorithm. We also find numerical minimizers for  two example problems, examples 4 and 5 below, that do not have an easily computed semi-analytic solution, demonstrating the scope of our algorithm.

\subsection{A modified split Bregman algorithm}\label{s:SBalgorithm}

We first review the split Bregman algorithm \cite{goldstein2009}, which we use here to find minimizers of \cref{e:relaxed}, where both $\overline{\mathcal{W}}[d]$ and $\mathcal{V}[u]$ are convex energy densities.
An equivalent formulation of~\eqref{e:relaxed} is the constrained variational problem
\begin{equation}\label{e:constrain}
 \min_{u,d}  \overline{\mathcal{W}}[d] + \mathcal{V}[u] \quad \mbox{subject to} \quad u_x = d.
 \end{equation}
 We can impose the constraint (approximately) by recasting as an unconstrained problem with a ``large" penalty parameter $\gamma$.
\begin{equation}
 \min_{u,d}   \overline{\mathcal{W}}(d) + \mathcal{V}[u] + \frac{\gamma}{2} \| d- u_x\|^2.
 \label{e:augmented}
 \end{equation}
The advantage, of course, is that $u$ and $d$ are now decoupled, but the drawback is that the resulting variational equations are stiff if $\gamma$ is large and the convergence can be very slow \cite{goldstein2009}. Interestingly, the minimizers of~\eqref{e:constrain} can also be obtained by iterating the following {\em split Bregman} scheme \cite{goldstein2009} (see also appendix~\ref{s:AppendixAA}),
\begin{align}
(u^{k+1},d^{k+1}) & = \mbox{ argmin}_{u,d} \overline{\mathcal{W}}[d] + \mathcal{V}[u] + \frac{\gamma}{2} \| d -u_x -b^k\|^2, \nonumber\\
b^{k+1} & = \ b^k + (u_x^{k+1} - d^{k+1}).
\label{alg:split-Bregman}
\end{align}
The functionals $\overline{\mathcal{W}}$ and $\mathcal{V}$ are decoupled and we can carry out the minimization in two steps,
\begin{align*}
u^{k+1}  =& \mbox{ argmin}_{u,d}  \mathcal{V}[u] + \frac{\gamma}{2} \| d^k - u_x -b^k\|^2, \nonumber \\
d^{k+1} =& \mbox{ argmin}_{u,d} \overline{\mathcal{W}}[d] + \frac{\gamma}{2} \|d - u_x^{k+1}  -b^k\|^2.
\end{align*}
The first subproblem can be solved using for example a conjugate gradient method or Gauss-Seidel, while the second \emph{nonsmooth} subproblem can be solved by a piecewise shrink operator which we define in \cref{e:shrink}.

We remark on a few key features of the split Bregman algorithm~\eqref{alg:split-Bregman}
\begin{enumerate}
\item The update for $b^k$ is {\em not from minimizing} the augmented functional $E^k = \overline{\mathcal{W}}(d) + \mathcal{V}(u) + \frac{\gamma}{2}\|d - u_x - b^k\|^2$ that is defined  in~\eqref{alg:split-Bregman}.
\item $(u^{k+1},d^{k+1})$ are the minimizers of an augmented functional $E^k$. However, the variational equations for $E^k$ {\em are not the same} as those of the objective \eqref{e:constrain}, or the version with the soft constraint \eqref{e:augmented}. In particular, the functionals $E^k$ depend on $b^k$ which varies from one step to the next. Consequently, the energies $\overline{I}[u^k]$ {need not}, and in general do not, decrease monotonically when evaluated on the sequence $u^k$ (See Fig.~\ref{f:EnergyConverges}).
\item The split Bregman iteration has an {\em error forgetting} property \cite{yin2013}. Since the functional $E^k$ changes by an amount that depends on the change in $b^k$, any ``errors" $\|\tilde{u}^{k+1} - u^{k+1}\|$ and $\|\tilde{d}^{k+1} - d^{k+1}\|$ between approximate minimizers $\tilde{u},\tilde{d}$ and the true minimizers of $E^k$ are ``forgotten", once $b^k$ is updated, provided they are smaller than $\|b^{k+1}-b^k\|$. 
\item Under certain ``reasonable" hypotheses on $\overline{\mathcal{W}}$ and $\mathcal{V}$ (see  discussion in appendix~\ref{s:AppendixAA}) we can show that $\|u^k_x - d^k\| \to 0$ (Prop.~\ref{prop:hminimizing}). Prop.~\ref{prop:fixed}  implies that $b^k \to b^*, u^k \to u^*, d^k \to d^* = u_x^*$, a fixed point for the Bregman iteration, which is necessarily a minimizer for the constrained variational problem~\eqref{e:constrain}.
\item In contrast to constrained optimization methods, the split Bregman iteration converges to the minimizer of~\eqref{e:constrain} {\em for any choice} $\gamma > 0$. $\gamma$, therefore, need not be ``large"  and can be chosen to optimize the rate of convergence \cite{goldstein2009}. 
\end{enumerate}

Note that we cannot use the algorithm as formulated above to find minimizers of functionals with $V[x,u]$ nonconvex. As we discuss in the introduction, this can be remedied by recasting the problem as a gradient flow, using a convex splitting scheme, and then adapting the split Bregman algorithm to solve the resulting convex problem. 

In what follows, we will consider evolution in `time' for a gradient flow, as well as split Bregman iterations for minimizing a `time-independent' functional. To keep this distinction clear, we will use a superscript index $u^k$ for the Bregman iterations, and a subscript index $u_n \equiv U_n$ for time evolution.

To describe our method we first review the main ideas behind convex splitting schemes. As the name suggest, these numerical algorithms consist in splitting a nonconvex functional, $\overline{I}$, into a convex part, $\overline{I}_+$, and a concave part, $\overline{I}_-$. The weak formulation of the gradient flow is 
\[ \langle \partial_t u,w \rangle  = - \left( \frac{\delta \overline{I}+}{\delta u}[u],w\right) -\left( \frac{\delta \overline{I}-}{\delta u}[u],w\right),   \]
where $u \in u_0 + H^1_0(\Omega)$ and $\langle \cdot, \cdot\rangle$ is the inner product in $H^1_0(\Omega)$. The contribution of the nonconvex part $\overline{I}_-$ is treated explicitly in the time stepping, i.e. it is evaluated at a previous time step and treated as a forcing term. For a time step of $h$, the algorithm then  consists in solving,
\[ \left\langle \frac{u_{n+1}-u_n}{h}, w \right\rangle  =- \left(  \frac{\delta \overline{I}+}{\delta u}[u_{n+1}],w \right) - \left( \frac{\delta \overline{I}-}{\delta u}[u_n],w \right).\]
This equation is formally the Euler-Lagrange equation for the Rayleigh functional
\begin{align}
    \label{e:Rayleigh}
 R[v; u_n] & = \frac{1}{2h}\langle v-u_n, v - u_n \rangle + \overline{I}_+[v] + \left( \frac{\delta \overline{I}-}{\delta u}[u_n],v-u_n \right) + \overline{I}_-[u_n], 
 \end{align}
where the last two terms are the linearization of $\overline{I}_-$ at $u_n$. Our numerical scheme finds approximate minimizers of the Rayleigh functional $v \mapsto R[v;u_n]$ using the  split Bregman algorithm described above. Since $R$ is strictly convex in its first argument, minimizers exist and are unique. The update rule for the gradient flow is therefore $u_{n+1}  = \arg \min_v R[v;u_n]$.  As shown in \cite{glasner2016}, the sequence $\{u_n\}$, of minimizers of $R[\cdot\,;u_{n-1}]$,  converges to a local minimum of $\overline{I}$ to within an error of $O(h)$.

Since local minimizers for the (potentially) non-convex function $\overline{I}$ can be characterized as fixed points for the mapping $u \mapsto \arg \min_v R[v;u]$, it suffices to compute approximate minimizers $\tilde{u}_{n+1} \approx \arg \min R[v;\tilde{u}_n]$ provided that the sequence $\tilde{u}_n$ converges, $\tilde{u}_n \to u^*$, in a sufficiently strong sense that we can pass to the limit in $R$ to get $u^* =\arg \min_v R[v,u^*]$. 

The objective functional $R[ \cdot \, ; u_n]$ changes with $n$. This fits naturally within a split Bregman iteration framework, since the augmented objective function~\eqref{e:augmented} also changes with $b$. Consequently, all we require is that $|R[v; \partial_x \tilde{u}_{n+1}] - R[v;\partial_x \tilde{u}_n]|$ should be comparable to $\|b_{n+1} - b_n\|$ for all the `candidate minimizers' $v$ at step $n$. This, along with the error forgetting property of the split Bregman iteration will ensure convergence to a fixed point even with the approximate inputs $\tilde{u}_n$.

Our algorithm for finding the local minima of $\overline{I}$, using the modified split Bregman algorithm with convexity splitting, as motivated by the preceding discussion, is given in Algorithm~\ref{algorithm1}. A Matlab implementation of this algorithm is available at \url{https://github.com/gabyjaramillo/Bolza-SplitBregman} \cite{code2019}.

\begin{algorithm}
\caption{Split Bregman with convexity splitting \label{algorithm1}}
\begin{algorithmic}[1]

\STATE \textbf{Preliminary:} Nonconvex potential $W[d]$, interval $d \in [a,b]$,   
\STATE $\overline{\mathcal{W}}[d] \gets$  Beneath and Beyond $(W(d),[a,b],N)$

\STATE \textbf{Inputs:} Tolerance $\mathrm{tol}_1$, step-size $h$, parameter $\gamma > 0$, and SB iterations $K$.

\STATE \textbf{Initialize:} $n \gets 0$, $U_0 = u^0 \gets 0, d^0 \gets 0$, $b^0 \gets 0$ 
\REPEAT

\FOR{$k= 0$ \TO $K-1$}
\STATE $u^{k+1} \gets  \textrm{argmin}_{u} \left[\dfrac{\gamma}{2} \|d^k-u_x-b^k\|_2^2+ \frac{1}{2h} \|u-U_n\|_2^2  +\mathcal{V}_+[u]+(\delta \mathcal{V}_-[U_n],u)\right]$
\STATE $d^{k+1} \gets \textrm{argmin}_{d}\quad  \overline{\mathcal{W}}[d] +  \dfrac{\gamma}{2} \|d - u^{k+1}_x-b^k\|_2^2\quad $ implemented using \eqref{e:shrink}
\STATE $b^{k+1} \gets b^k + (u_x^{k+1}-d^{k+1})$ 
\ENDFOR
\STATE $n \gets n+1$, $U_{n} = u^0 \gets u^K$, $D_n=d^0 \gets d^K$, $B_n =  b^0 \gets b^K$

\UNTIL  $\|D_n - \partial_x U_{n}\|^2_2 \leq \mathrm{tol}_1$ 
\RETURN $U_n$
\end{algorithmic}
\end{algorithm}

In our first step we approximate the convex envelope of $W[d]$ 
following the implementation of the the Beneath and Beyond algorithm in \cite{lucet1997}.

This is followed by a gradient flow loop which minimizes the Rayleigh functional, $R[v;u_n]$, at each step using the split Bregman algorithm. In the examples shown in the next section we use five iterations of this scheme, i.e. we set {\bf $K=5$} in our algorithm. 

Although the proof for the convergence of the algorithm relies on the fact that the sequence of Bregman iterates converge to the minimizer of $R[v;U_n]$ as $K \to \infty$, the numerical algorithm does not need to run the split Bregman scheme to full convergence. It is enough to complete just a few split-Bregman iterations in order to guarantee that the sequence $\|D_n - \partial_x U_n\|_2$ decreases. Conversely, in iterating until an error $\|u^K-\bar{u}\|_2 < \|b^{K}-b^0\|_2$ is obtained, the extra level of accuracy is wasted at the next time step of the gradient flow when the values of $B_n,U_n,D_n$ are updated. We terminate algorithm~\ref{algorithm1} when the error in the constraint falls bellow a chosen tolerance, i.e.  $\| D_n - \partial_xU_n \|^2_2 < \mathrm{tol}$, which is the signature for convergence to a fixed point (see Prop.~\ref{prop:fixed} in the appendix).

As with the original split Bregman algorithm, the minimization of the Rayleigh functional can be carried out as two step process.
\begin{align*}
u^{k+1} = &  \;\mbox{ argmin}_{u}\, \frac{1}{2h} \|u-u^k\|_2^2  +\mathcal{V}_+[u]+(\delta \mathcal{V}_-[u^k],u-u^k) + \mathcal{V}_-[u^k]\\[2ex]
 & \hspace{10ex}   + \frac{\gamma}{2} \|d^k- u_x -b^k\|_2^2,\\[2ex]
d^{k+1} = & \;\mbox{ argmin}_{d}\,  \overline{\mathcal{W}}[d] +  \frac{\gamma}{2} \|d -u^{k+1}_x -b^k\|_2^2.
\end{align*}
To tackle the first subproblem we use Gauss-Seidel iterations to approximate the solution to the corresponding Euler-Lagrange equations. Thanks to the error forgetting property of the split Bregman scheme we don't have to compute this solution to full accuracy, with ten iterations being sufficient. 

To solve the second subproblem, we view the gradient of $\overline{W}$ as piecewise constant function,
\[ 
\partial \overline{W}(d) = \left \{ 
\begin{array}{l c l}
s_-  & \mbox{for} &d< d_0\\
s_i & \mbox{for}& d_{i-1}\leq d < d_i,\\
s_+  & \mbox{for} &d_N< d
\end{array}\right.
\]
where $d_i$ are points where $\partial\overline{W}$ is discontinuous, with $s_- <s_i < s_{i+1} < s_+, 0 \leq i < N$.  The minimization is given by the piecewise shrink operator $S_p$, defined as follows 
\begin{equation}\label{e:shrink}
S_p (i, z, \gamma) =
 \left \{  \begin{array}{ l c l}
z- \frac{s_i}{\gamma} & \mbox{for} & z < d_i+\frac{s_i}{\gamma}, \\
z- \frac{s_{i+1}}{\gamma} & \mbox{for} & z > d_i+\frac{s_{i+1}}{\gamma}, \\
d_i & \mbox{for} & d_i+\frac{s_i}{\gamma}\leq z\leq  d_i+\frac{s_{i+1}}{\gamma},
\end{array} \right.
\end{equation}
with $z = u_x^{k+1}+b^k$. We can allow $s_{\pm} = \pm \infty$ 
in which case $S_p(i,z,\gamma)$ equals $d_0$ for $z < d_0$ and $d_N$ for $z > d_N$. 

Numerical experiments looking at the rates of convergence of algorithm~\ref{algorithm1} for various example functionals and various choices of $\gamma,h$ and $\Delta x$ suggest the heuristic $\gamma \sim h \sim \Delta x$ to obtain the fastest convergence. We henceforth adopt this heuristic in this work. This heuristic can be justified, in part, by the following argument. The Euler-Lagrange equations for the first subproblem can be written abstractly as 
\[ \mathcal{L} u = \gamma u_{xx} - \left(\frac{1}{h} + c(x) \right)  u = f,\]
where the coefficient $c(x)$ depends on our choice of potential $V(x,u)$. For all examples considered here $c(x)$ is always a positive function. Using a centered difference approximation we find that the discretized operator has signature 

\[ \mathcal{L}_d u_i =\frac{\gamma}{\Delta x^2}\left ( u_{i-1} -2 u_i + u_{i+1} \right) - (1/h + c) u_i .
\]

By Gershgorin's Circle Theorem we know that all eigenvalues of the operator must lie in circles  centered at $C_i = -( 2\gamma/( \Delta x)^2 +1/h + c(x_i))$ and of radius $R_i = 2\frac{\gamma}{\Delta x^2}$. This allows one to approximate the condition number of $\mathcal{L}$
as 
\[ K(\mathcal{L}) = \frac{4\gamma/ (\Delta x)^2 + 1/h + c(x) }{1/h + c(x)} \sim \frac{ 4\gamma h}{( \Delta x)^2 (1+c(x)h)},
\]
which suggests that in order to reduce the condition number of the matrix $\mathcal{L}_d$, we must pick $\gamma$ and $h$ so that $\gamma h \sim (\Delta x)^2$, consistent with our heuristic $\gamma \sim h \sim \Delta x$.

%%%%%%%%%%%%%%%%%%%%%%%%%%%%%%%%%%%%%%%%%%%%%%%%%%%%%%%%%%%%%%%%%%%%%%%%%%%%%
% EXAMPLES
%%%%%%%%%%%%%%%%%%%%%%%%%%%%%%%%%%%%%%%%%%%%%%%%%%%%%%%%%%%%%%%%%%%%%%%%%%%%%

\subsection{Examples} 
 We conclude this section with some numerical examples. Unless indicated otherwise $\mathrm{tol}_1 = 1e^{-12}, \gamma =0.01, h =0.01, \Delta x= 2^{-7} \sim 0.0078$, with $K=5$ and 10 iterations of Gauss-Seidel for each iteration of gradient flow.
\begin{figure}[h]
\centering
    \includegraphics[width=2.5in]{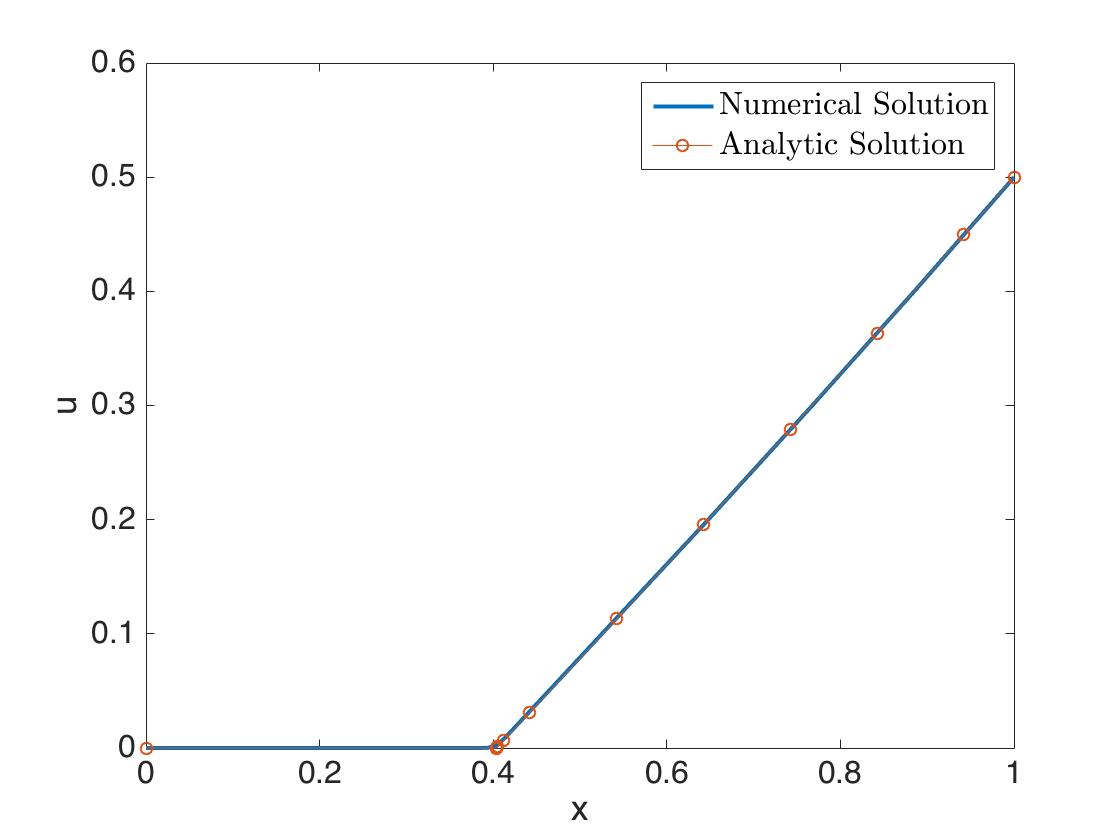}
 \caption{Numerical results for local minimizers of Example 1 using modified split Bregman together with gradient flow, but no convex splitting.  The approximate energy of the solution is $I[\bar{u}] = \overline{I}[\bar{u}]\approx 0.5013$.}\label{f:example1}
\end{figure}

\noindent\textit{Examples 1:}
We again consider the functional
\[ \overline{I}[u] = \int_{-1}^1 (u_x^2-1)_+^2 + u^2 \;dx, \quad u(0) = 0,\, u(1) = 1/2, \]
where $(d^2-1)^2_+$ represents the convex envelope of the piecewise function $(d^2-1)_*^2$, described in \cref{s:controlH}. In \cref{f:example1} the numerical results using the modified split Bregman algorithm are plotted against the analytic solutions found in \cref{s:controlH} showing that they are in excellent agreement. We also confirm that the energy corresponding to the minimizer obtained using our numerical scheme, $\overline{I} = 0.5013$, is in good agreement with the results found using the control Hamiltonian, $\overline{I} =0.505445 $. Towards the end of this section, we show  in \cref{t:energy} the energy, $\overline{I}$, corresponding to various minimizers found using different values of $\Delta x$.

\noindent\textit{Example 2:}
Next we consider a functional which is nonconvex in the variable $u$. 
\[ \overline{I}[u] = \int_{-1}^1 (u_x^2-1)_+^2 + (u^2-1)^2 \;dx \quad u(-1) = 0,\; u(1) = 0.\]
In this example the function $(d^2-1)^2_+$ now represents the convex envelope of the polynomial $(d^2-1)^2$. In \cref{f:example2}, we plot the two global minimizers against their semi-analytic counterpart found in \cref{s:controlH}. Again we find that the energy $\overline{I}$ corresponding to these minimizers is in good agreement with the results from \cref{s:controlH}. To see how the energy converges as $\Delta x$ goes to zero, see \cref{t:energy} at the end of this section.
\begin{figure}[h]
\centering
\subfloat[]
{
    \includegraphics[width=0.45\textwidth]{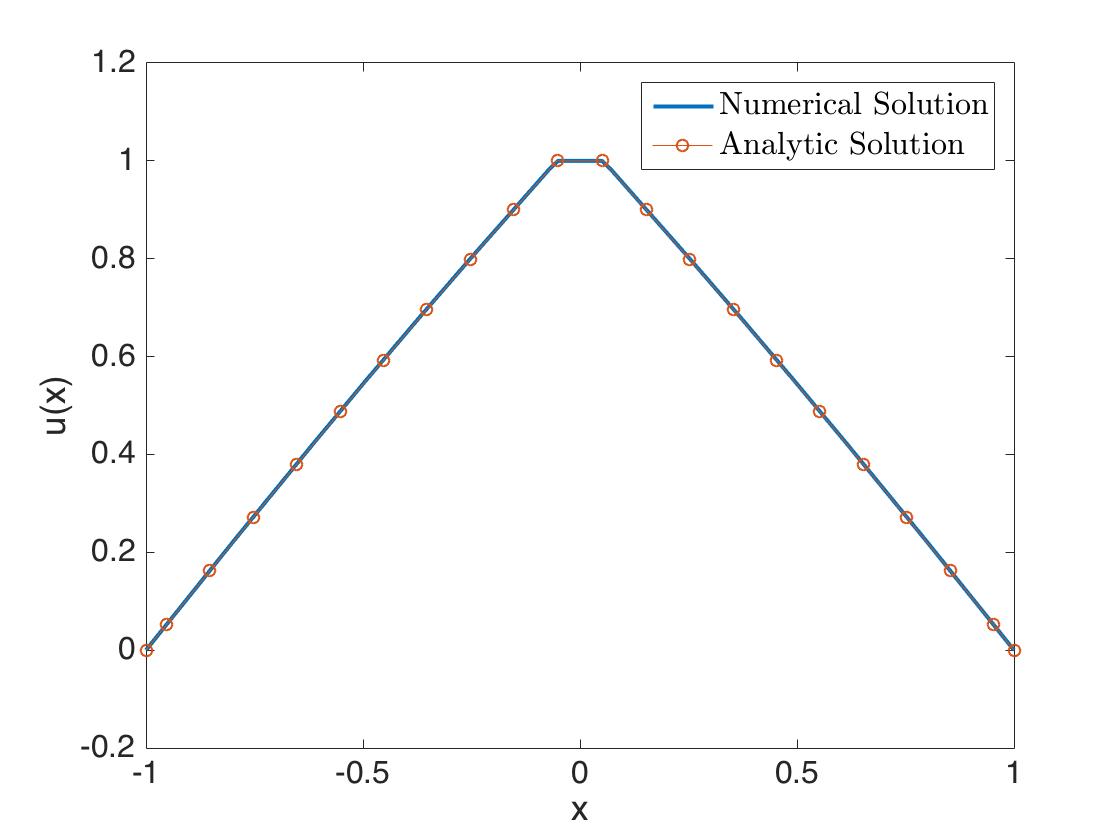}
}
\subfloat[]
{
    \includegraphics[width=0.45\textwidth]{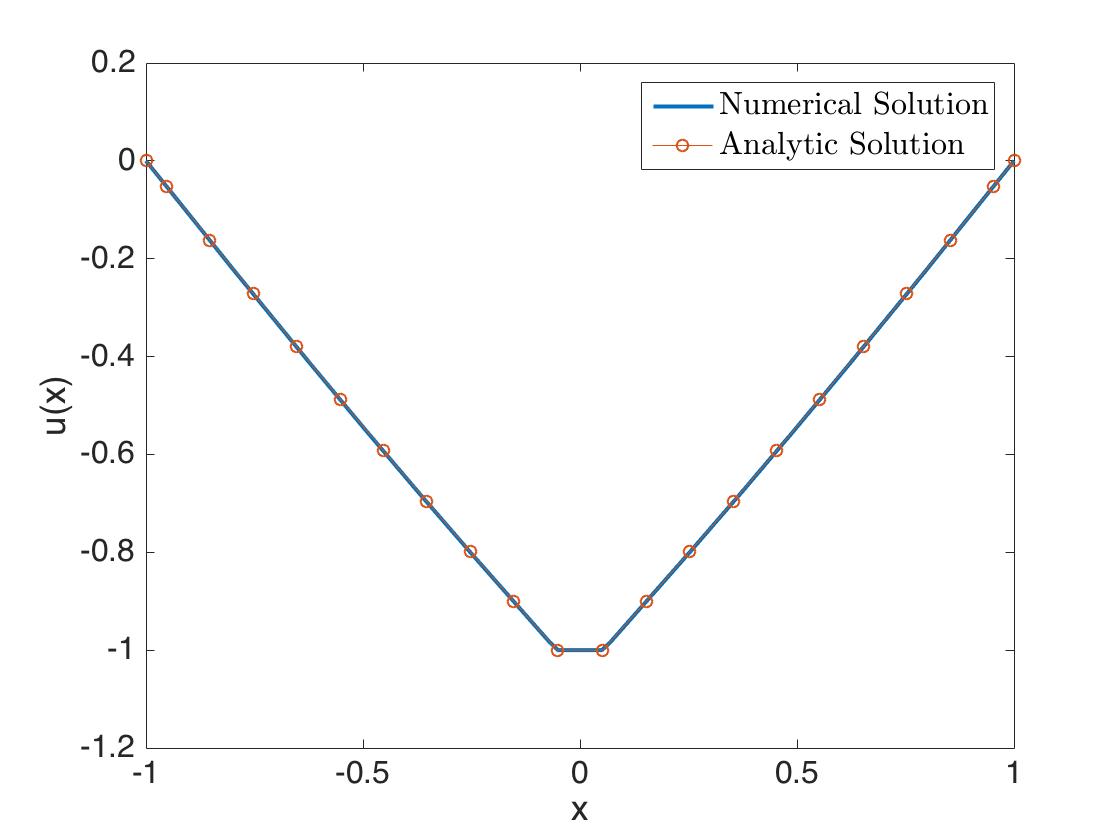}
}
 \caption{Numerical results for local minimizers of Example 2 using the convex splitting: $(u^2-1)^2 = 2au^2 + (u^4 -2(1+a)u^2 +1)$, $a=4$. Figure a) corresponds to an initial guess of $u =1$ and figure b) corresponds to an initial guess of $u =-1$. Both solutions have an approximate energy $ \overline{I}[u] = 1.0234$. Parameters: $\Delta x= 2^{-7}, h=\gamma = 0.01$ }\label{f:example2}
\end{figure}

\noindent \textit{Example 3}:
 We look at a variation of Example 2 with a triple well potential,
 \[ \overline{I}[u] = \int_{-1}^1 [(u_x^2-1)^2( (u_x-2)^2 -1)^2]^{**} + (u^2-1)^2 \;dx, \quad u(-1) = 0,\; u(1) = 0,\]
 where $[(u_x^2-1)^2( (u_x-2)^2 -1)^2]^{**}$ represents the convex envelope of $(u_x^2-1)^2( (u_x-2)^2 -1)^2$. A plot of this potential is given in \cref{f:example3} together with the numerical approximation for two minimizers of this functional.
 \begin{figure}[h]
 \centering
 \subfloat[]
 {
     \includegraphics[width=0.45\textwidth]{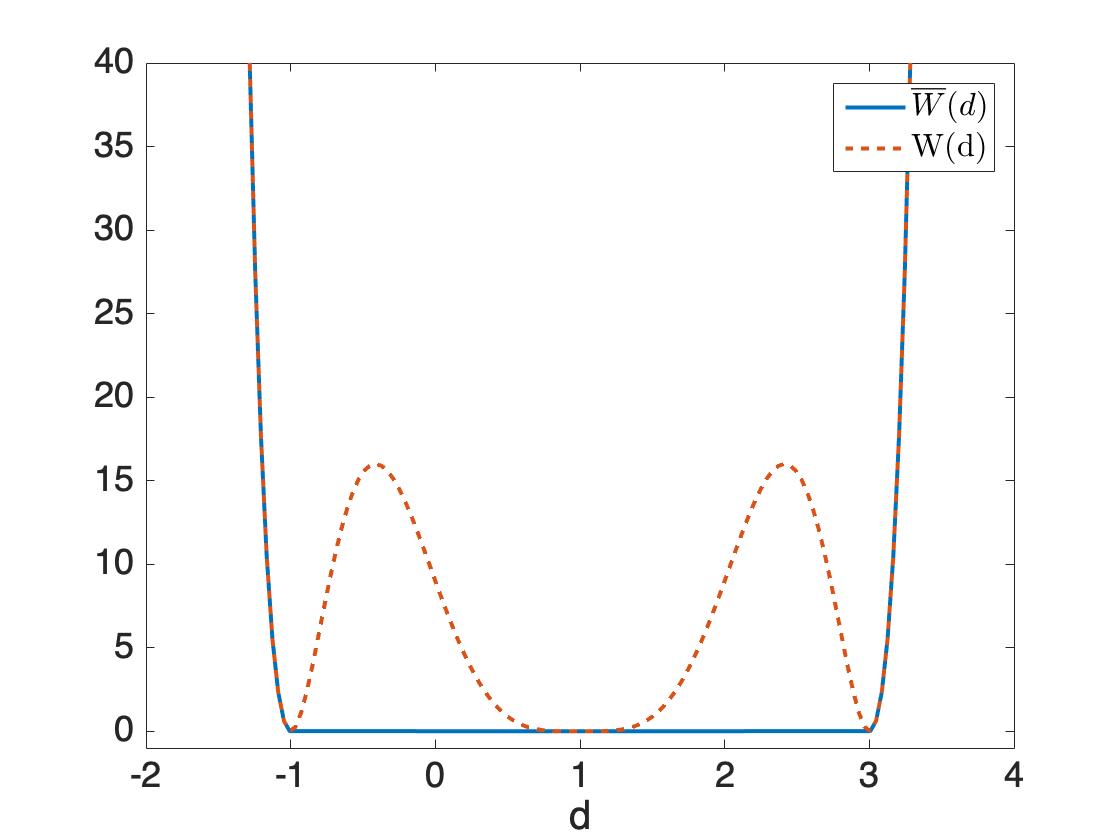}
 }
 \subfloat[]
 {
     \includegraphics[width=0.45\textwidth]{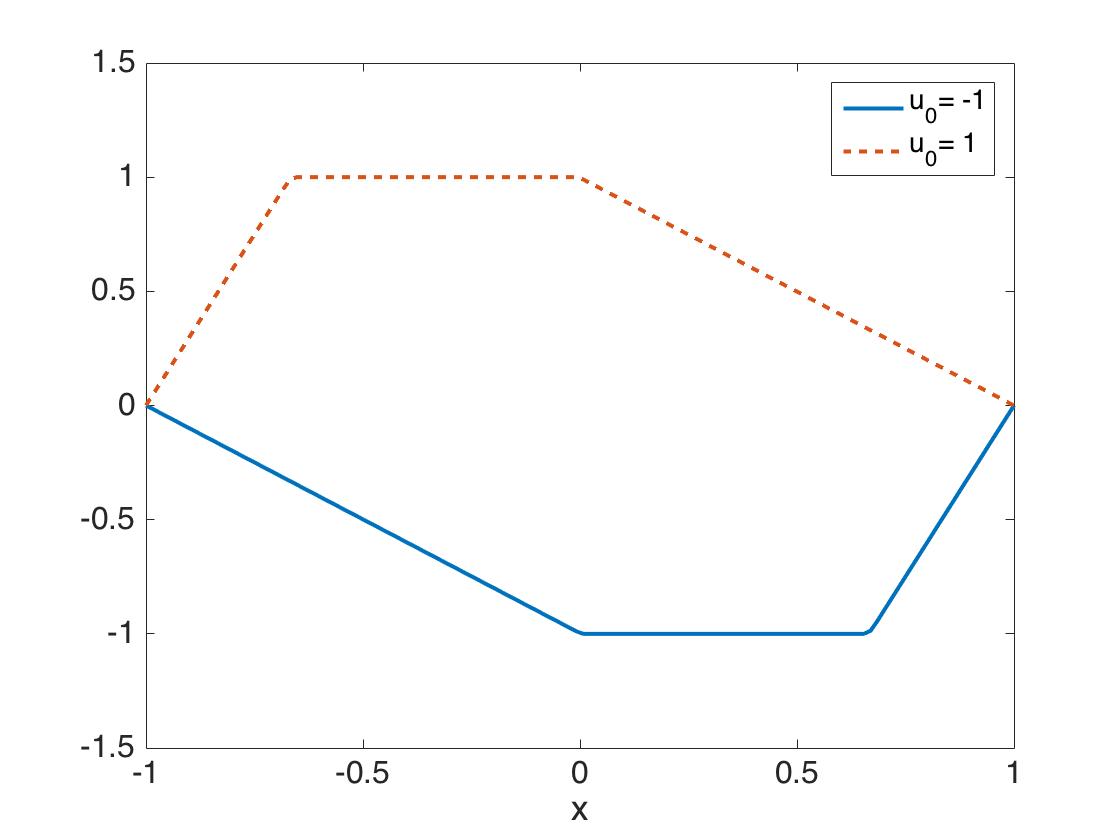}
 
 }
  \caption{a) Convex envelope for $W(d)= (d^2-1)^2( (d-2)^2-1)^2$. b) Two minimizers for the relaxed energy $\overline{I}[u]$ in  Example 3. Both solutions have the same energy. Same convex splitting for $\mathcal{V}[u]$ as in Example 2 with $a = 4$.
  }\label{f:example3}
 \end{figure}
 It is clear from \cref{f:example3} that there are at least two minimizers for this problem with energy $\overline{I}[u]=0.7216$. If we now consider their gradients, which are depicted in \cref{f:gradientE3} one is able to calculate the optimal measure for the generalized problem $\tilde{I}[\nu]$. Labeling the two minimizers of the relaxation as $u_+$ and $u_-$ for the positive and negative solutions, respectively, then their associated Young measures, $\mu_+, \mu_-$ are
 \begin{figure}[h]
 \centering
 \subfloat[]
 {
     \includegraphics[width=0.45\textwidth]{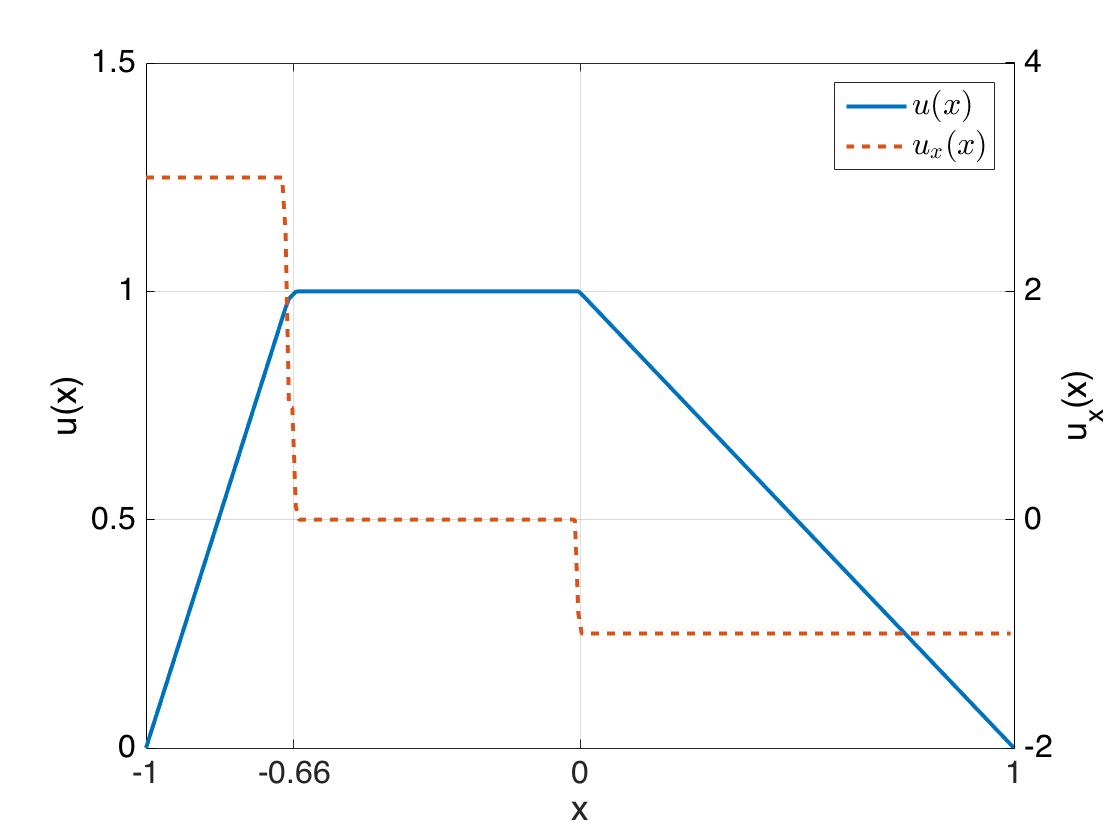}
 
 }
 \subfloat[]
 {
     \includegraphics[width=0.45\textwidth]{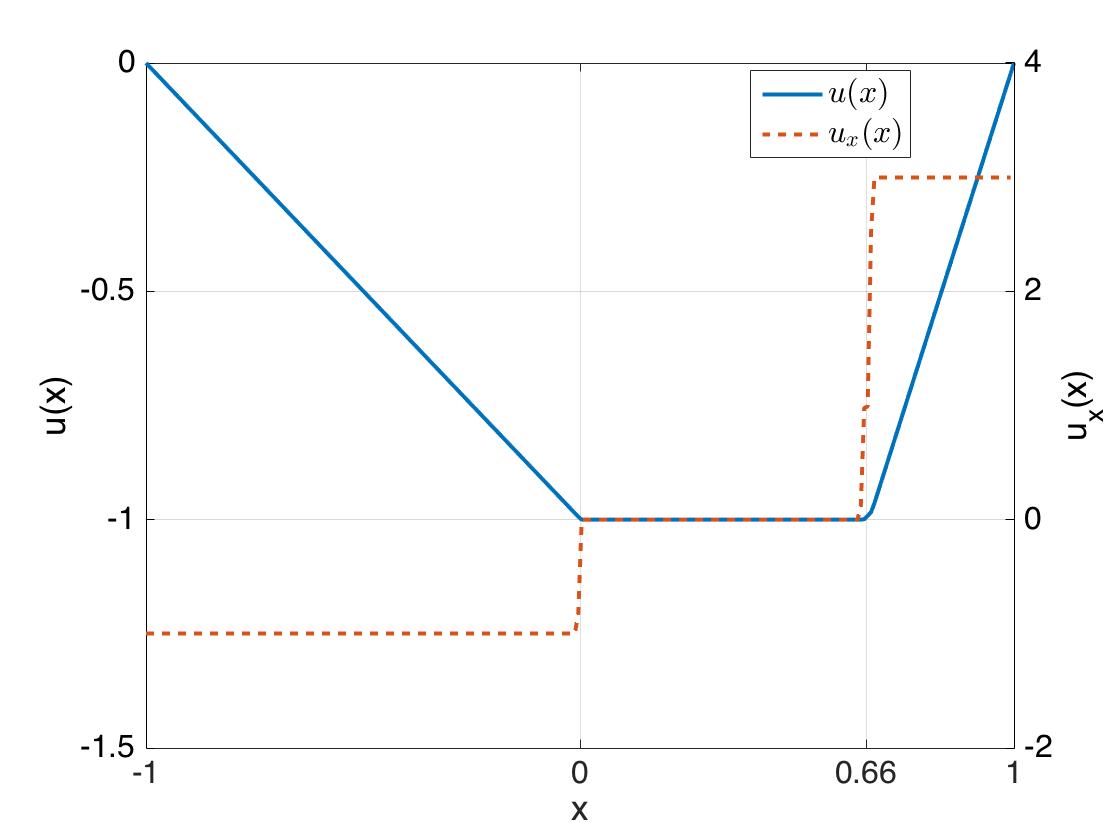}

 }
  \caption{Numerical results for Example 3. By plotting the minimizer, $u$, and its derivative, $u_x$, we are able to visualize the associated measures. Plot a) corresponds to an initial guess $u>0$, and plot b) corresponds to an initial guess of $u<0$.
  }\label{f:gradientE3}
 \end{figure}

 {\small
 \[ \mu_+ = \left \{ 
 \begin{array}{c c c}
 \delta_3 & \mbox{for}& -1\leq x\leq -0.66,\\
 \frac{3}{4} \delta_{-1} + \frac{1}{4}\delta_3 & \mbox{for} & -0.66< x < 0,\\
 \delta_{-1} & \mbox{for} & 0 \leq x \leq 1,
 \end{array} \right.
 \;
 \mu_- = \left \{ 
 \begin{array}{c c c}
 \delta_{-1} & \mbox{for}& -1\leq x\leq 0,\\
 \frac{3}{4} \delta_{-1} + \frac{1}{4}\delta_3 & \mbox{for} & 0< x < 0.66,\\
 \delta_{3} & \mbox{for} & 0.66 \leq x \leq 1.
 \end{array} 
 \right.
  \]
 }

\noindent\textit{Example 4}: We consider the energy $I[u] = \int_{-1}^1 (u_x^2-1)^2 + (u-g(x))^2 \; dx$ for which
 \begin{equation}
     \overline{I}[u] = \int_{-1}^1 (u_x^2-1)^2_+  + (u-g(x))^2 \;dx, 
     \label{e:naturalbc}
 \end{equation} 
with natural boundary conditions. We consider the case when $g(x) = \frac{1}{6}\sin(2 \pi x) +\frac{1}{2} \mathrm{e}^{x}$ and the function $(d^2-1)^2_+$ represents the convexification of the double well potential. In \cref{f:example4a}, we show the minimizer $u(x)$ together with the function $g(x)$ and in another plot we show both $u$ and its derivative $u_x$. We see that $u(x)$ tracks $g(x)$ over part of the interval, and $u_x = 1$ in the complement. We can now infer the Young measure associated with this solution and deduce that minimizing sequences for the nonconvex problem whose relaxation is~\cref{e:naturalbc} should develop oscillatory microstructure on the intervals $(-0.87, -0.3)$ and $(0.3,0.6)$. This feature is not easily predicted before actually solving the relaxed problem. 
\begin{figure}[h]
\centering
\subfloat[]
{
    \includegraphics[width=0.45\textwidth]{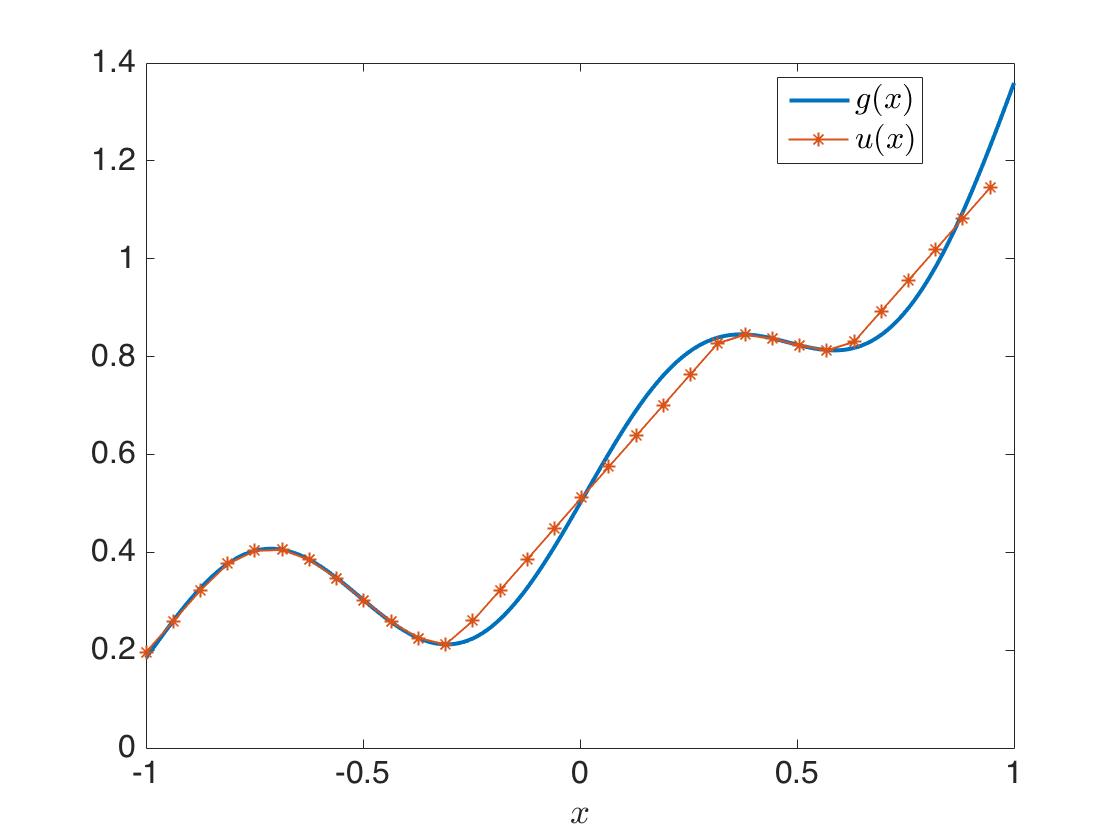}

}
\subfloat[]
{
    \includegraphics[width=0.45\textwidth]{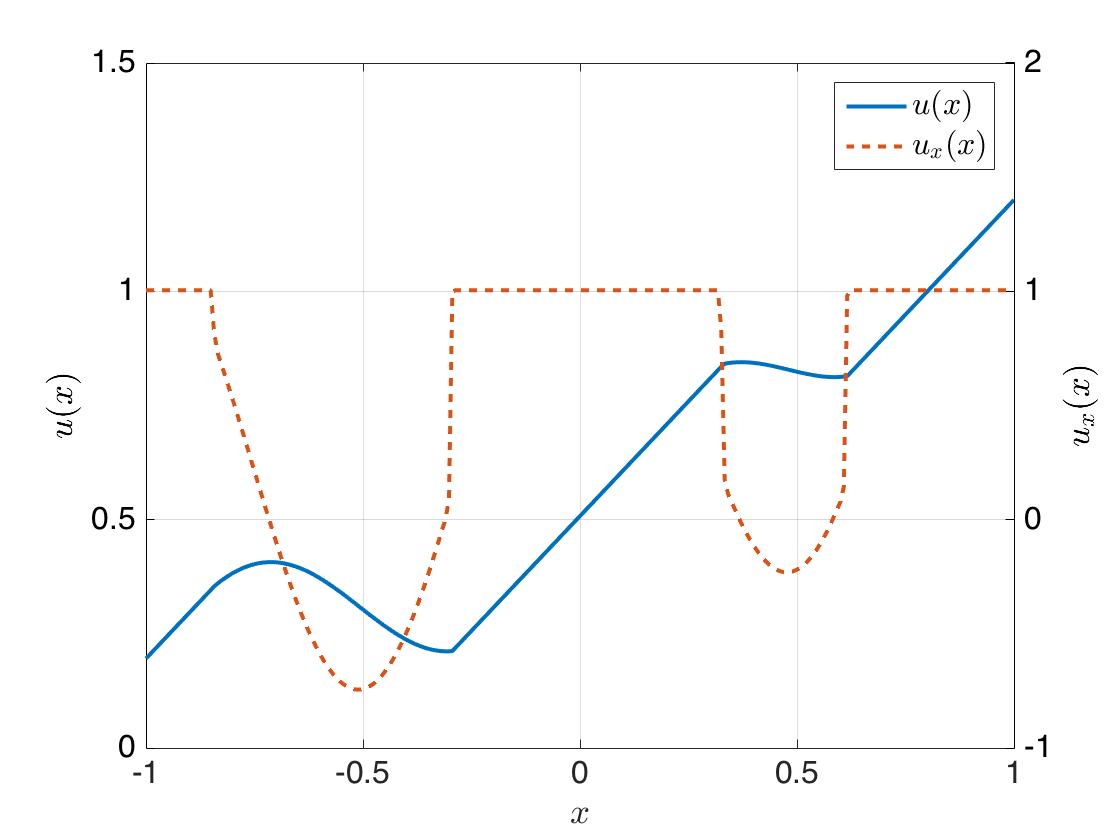}
 }
 \caption{Example 4: a) Plot of minimizer $u$ and the function $g(x)= \frac{1}{6}\sin(2\pi x) +\frac{1}{2} \mathrm{e}^{x}$.  b) Plot of local minimizer $u$ and its derivative $u_x$.  Parameters $\gamma = h = 0.01$ and $\Delta x = 2^{-8}$. For this example we use $K=5$ and 20 iterations of Gauss-Seidel per each iteration of gradient flow. }
 
\label{f:example4a}
\end{figure}

\begin{figure}[ht]
\centering
\includegraphics[width=0.45\textwidth]{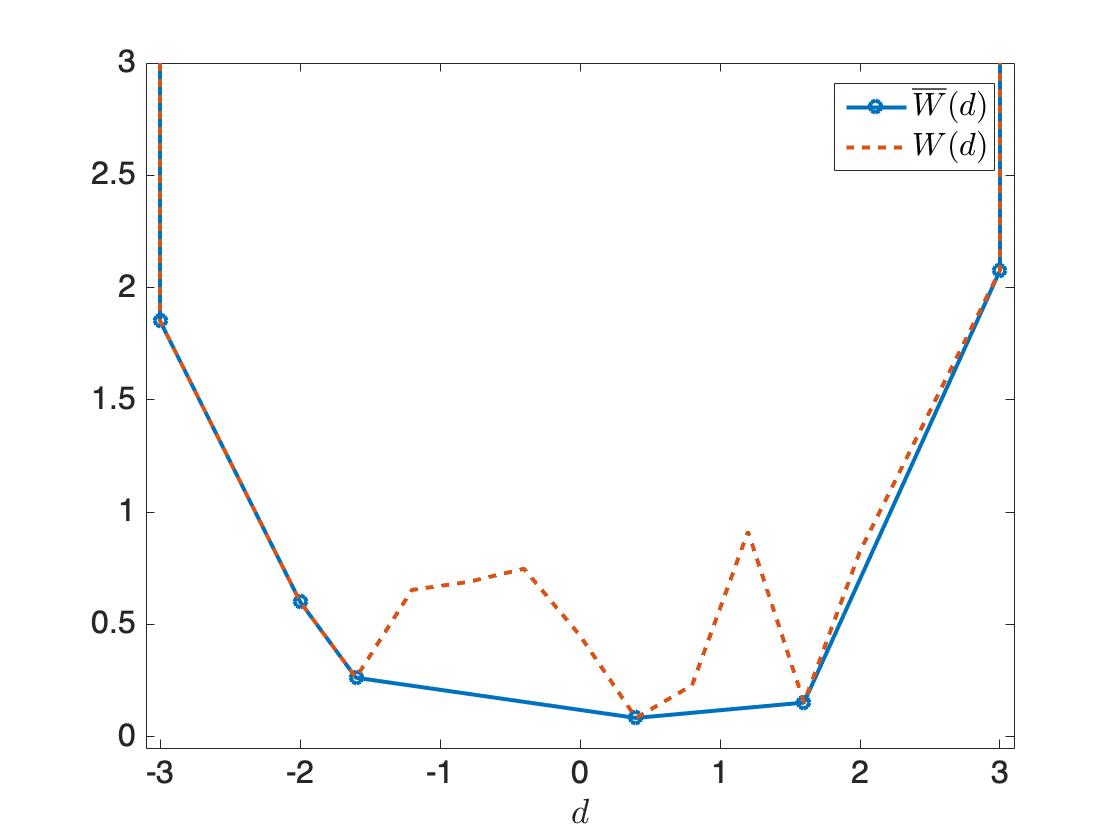}
\caption{ Example 5: Random potential $W[d]$ and corresponding convex envelope, $\overline{W}[d]$.}
\label{f:exampleRand}
\end{figure}

\noindent\textit{Example 5:} We now consider a ``fully numerical" example 
\[ \overline{I}[u] = \int_{-1}^1 \overline{W}[u_x] + (u^2 -g(x))^2 \; dx,\]
with natural boundary conditions, $g(x)= \frac{1}{4}\sin(2\pi x) + \frac{1}{2}$, and $\overline{W}[d]$ the convex envelope of a `random' function.  Here the values of $W(x_i)$ at given points $x_i$ are random samples from a uniform distribution (see \cref{f:exampleRand}). In \cref{f:example6}, we see that the solution $u$ tries to stay close (in absolute value) to the function $\sqrt{g(x)}$, while at the same trying to maintain a slope close to $0.4$. From this minimizer we can infer the associated Young measure and deduce that oscillations will be present in the intervals $(-0.66,-0.37), (0.33,0.62)$.

\begin{figure}[ht]
\centering
\subfloat[]
{ \includegraphics[width=0.45\textwidth]{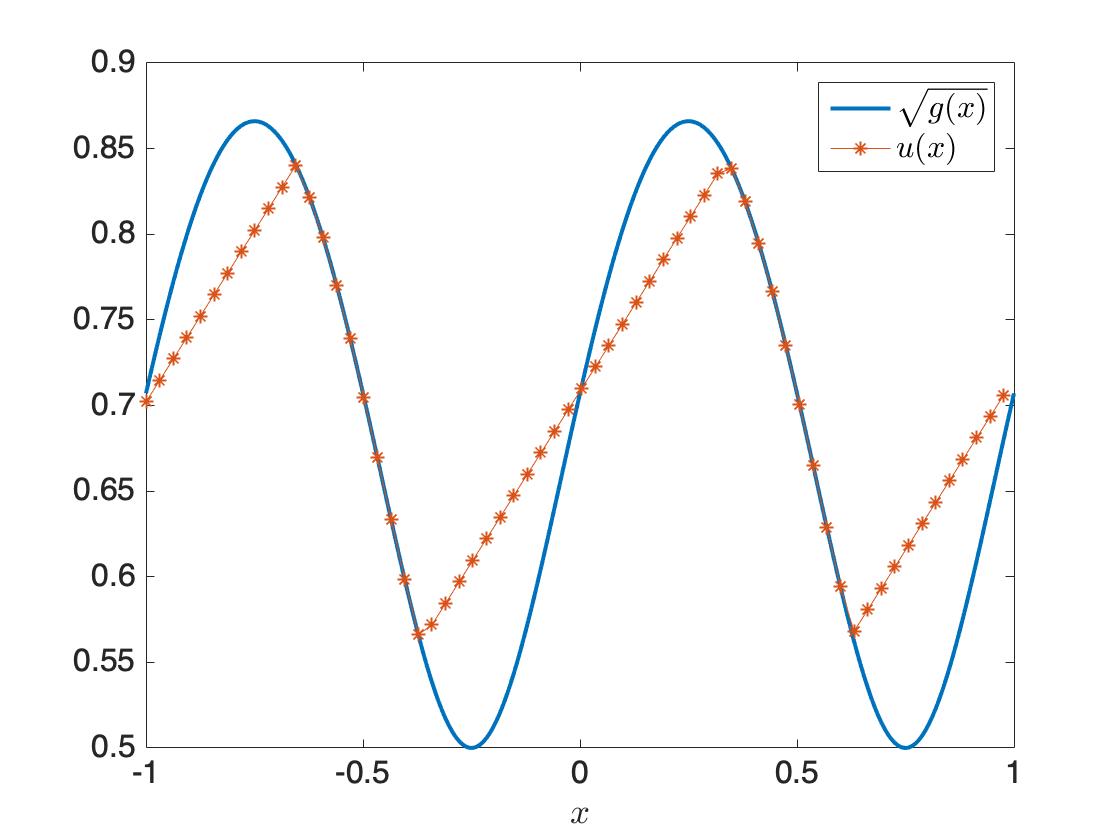}}
\subfloat[]
{\includegraphics[width=0.45\textwidth]{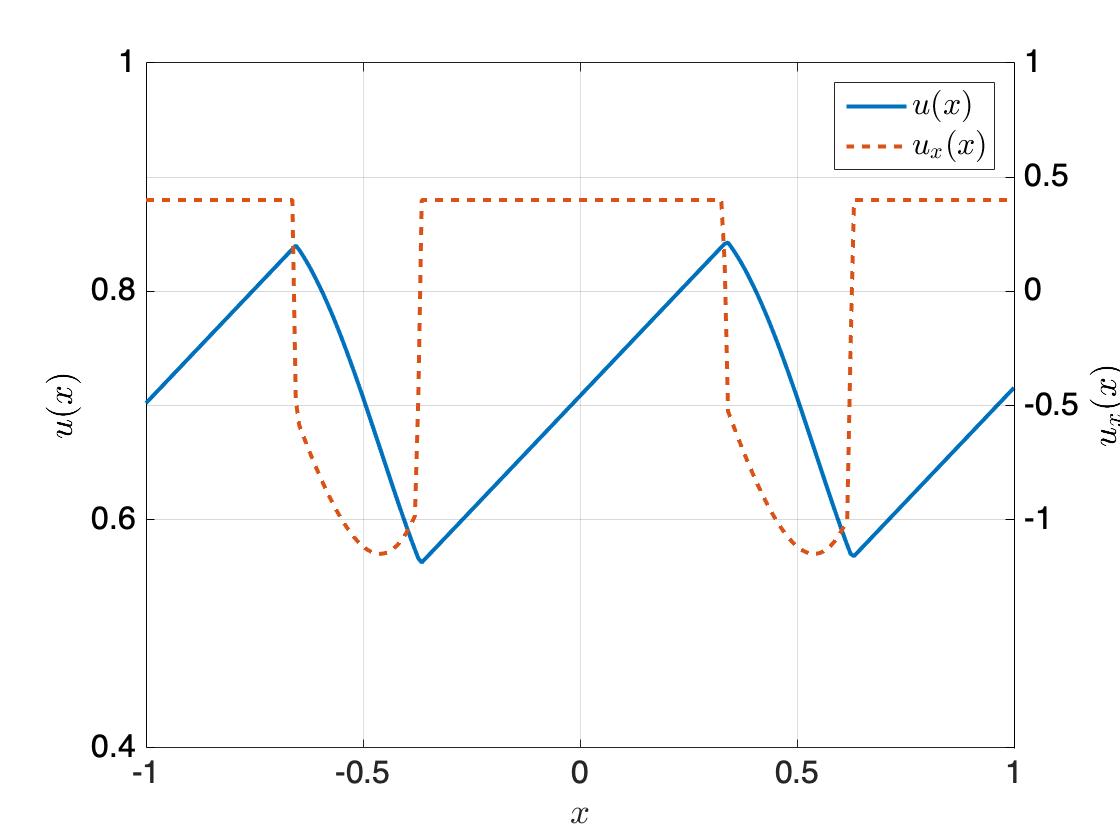}}
\caption{Example 5:  Parameter values used are: $\Delta x= 2^{-8}, h =0.01, \gamma =0.01$. Convex splitting: $(u^2- g)^2 = 2a u^2 + (u^4 -2(g+a) u^2 + g^2), a =4.1$  We also use {\color{blue} $K=10$} and 20 iterations of Gauss-Seidel per each iteration of gradient flow.}
 \label{f:example6}
\end{figure}

In \cref{t:runTime} we record running times for the modified split Bregman algorithm for the different examples presented in this section, and for different values of the grid spacing. Here we set $h=$ max$(\Delta x, 0.01)$ and $\gamma =h$. In  \cref{t:energy}, we also record the energy vs. $\Delta x$ corresponding to minimizers found using our algorithm for the functionals given in Examples 1 and 2. Lastly, in \cref{f:EnergyConverges} we plot the energy $\overline{I}[U_n]$ and the  constraint error $\|D_n - \partial_x U_{n}\|^2_2$ vs. $n$, the number of iterations of the gradient flow, illustrating the fast convergence of the algorithm. We note that the energy and error decay, but not monotonically. The inset shows that the non-monotonicity of the energy persists, albeit on a much smaller scale, even as $t$ gets large. Within each gradient flow step (the outer loop in ~\cref{algorithm1})  we don't need to iterate the split-Bregman steps (inner loop) until $u^k$ converges to the minimizer of the Rayleigh functional $R[\cdot\,;U_{n-1}]$. Precision in $U_n$ beyond the size of $\|b^0-b^K\|$ is ``wasted" \cite{goldstein2009}. In our numerical implementation, we find it sufficient to limit to $K = 5$ split Bregman iterations per gradient flow step.
{\small 
\setlength{\tabcolsep}{6pt}
\renewcommand{\arraystretch}{1.5}
\begin{table}[htbp]
\centering
\begin{tabular}{ c c c c c c }
 $\Delta x $ & {\bf Example 1} & {\bf Example 2} & {\bf Example 3}& {\bf Example 4} & {\bf Example 5} \\
 \hline
 $2^{-5}$ & 0.2414 & 0.5878 & 0.6938 & 0.1439  &0.2369  \\
$2^{-6}$ & 0.6421& 1.1019 & 1.2484 & 0.2782& 0.4538 \\  
$2^{-7}$ & 1.3452 & 2.8495 & 3.5546 & 0.6241  & 0.9542 \\ 
$2^{-8}$ & 1.7376 & 3.4116& 4.5111& 0.8229  & 1.1639 \\ 
$2^{-9}$ & 3.2981 & 3.9084 & 6.3592 & 1.2217 &  1.9043\\
$2^{-10}$ & 7.3790& 10.1703& 16.2610 & 2.4840  & 3.2975
\end{tabular}
\caption{Running times in seconds for our implementation the modified split Bregman algorithm \protect{\cite{code2019}} on a Macbook Pro laptop as measured using Matlab's tic-toc function. Parameters $\mathrm{tol} = 10^{-12}$, $h= \gamma =$ max$(\Delta x, 0.01)$, $K =5$ (except for example 6 where $K =10$), and 10 iterations of Gauss-Seidel.}
\label{t:runTime}
\end{table}
}

{\small
\setlength{\tabcolsep}{6pt}
\renewcommand{\arraystretch}{1.5}
\begin{table}[tbp]
\centering
\begin{tabular}{ c c c c c c c c}
 $\Delta x $ & $2^{-5}$ & $2^{-6}$ & $2^{-7}$ & $2^{-8}$ & $2^{-9}$  & $2^{-10}$ & Semi-analytic\\
 \hline
 {\bf Ex. 1} & 0.4885 &0.4971 &0.5013 & 0.5034 &0.5044 & 0.5049 & 0.50545 \\
 {\bf Ex. 2} &1.0208  &1.0227 &1.0234 &1.0238 &1.0240 & 1.0241& 1.02408
\end{tabular}
\caption{Energies, $\overline{I}[\bar{u}]$, of the minimizers, $\bar{u}$, found using our modified split Bregman algorithm \protect{\cite{code2019}} for different values of $\Delta x$, and using the control Hamiltonian. }
\label{t:energy}
\end{table}
}

 \begin{figure}[htbp]
 \centering
 \subfloat[]
 {
     \includegraphics[width=0.45\textwidth]{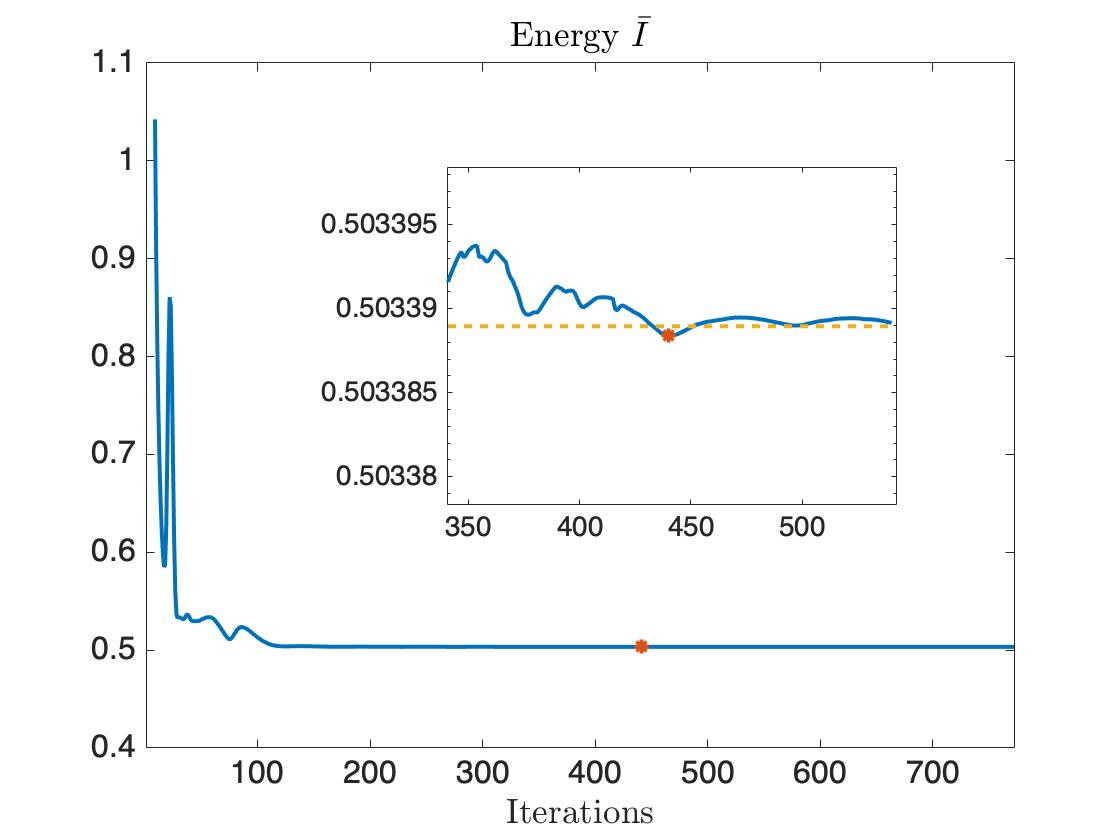}

 }
 \subfloat[]
 {
     \includegraphics[width=0.45\textwidth]{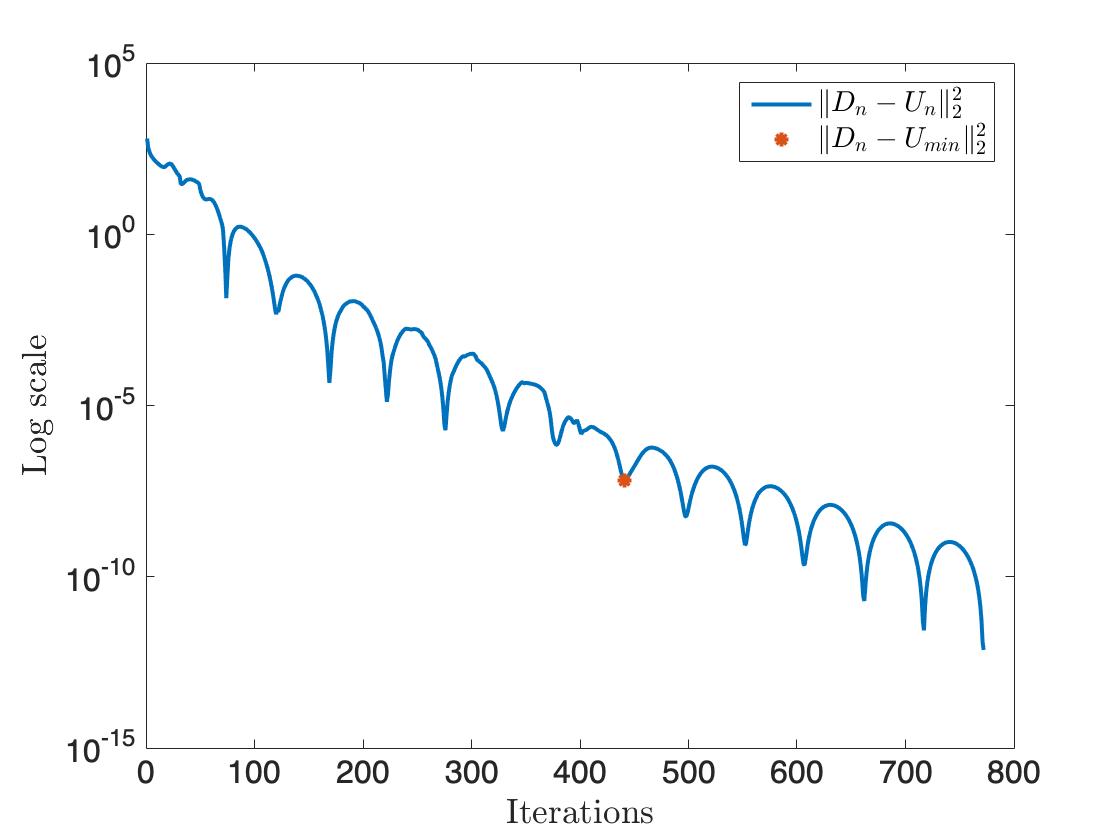}

 }

  \caption{Example 1. Plot of a) the energy $\overline{I}[U_n]$ and b) the $\log_{10}$ of the error $\|D_n - \partial_x U_{n}\|^2_2$ as a function of $n$, the number of gradient flow iterations. The highlighted point in figure a) represents $U_{min}$ where the minimum energy achieved, and the dashed line represents $\overline{I}[U_{last}]$, the energy at the termination of the algorithm. Parameters used in the computations are: $\Delta x = 2^{-8}, h = \gamma = 0.01,$ and $\mathrm{tol} = 1e^{-12}$, $K =5$, and 10 iterations of Gauss-Seidel.}
\label{f:EnergyConverges}
 \end{figure}

%%%%%%%%%%%%%%%%%%%%%%%%%%%%%%%%%%%%%%%%%%%%%%%%%%%%%%%%%%%%%%%%
%%%%%%%%%%%%%%%%%%%%%%%% CONCLUSION%%%%%%%%%%%%%%%%%%%%%%%%%%%%%%

\section{Conclusion}
In this paper we develop two methods for finding minimizers of the relaxation of a non-convex energy. We focused on the case of functionals that are defined over scalar valued functions, since for these energies the relaxation involves only the convexification of the energy density with respect to the gradient variable. The issues that we need to resolve include computing the convex envelope (generically non-smooth) and its associated proximal operator numerically, and working with noncovex lower order terms. 

Our first method uses concepts from optimal control theory. We first derive the generalized Hamiltonian for the relaxation of the original nonconvex functional. We analyze this Hamiltonian using the Pontryagin Maximum Principle. This analysis leads to a system of ODEs which give us semi-analytic solutions for the relaxed problem, and thus also for the Young measure associated with minimizing sequences for the original nonconvex energy. 

Our second method is entirely numerical, using modifications of the split Bregman algorithm. Recognizing the similarities between  a piecewise linear approximation of the convex envelope $\overline{W}$ and the $L^1$ norm, we use a split Bregman inspired algorithm to find the minimizers of $\int[\overline{W}(u_x) + V(x,u)] dx$. This energy is analogous to a $L^1$ norm of $u_x$ plus a $L^2$ norm of $u$, a canonical structure for the problems from image processing that motivated the initial development of the split Bregman method \cite{goldstein2009}. There are, of course, substantial differences between problems in image processing, and our motivating problems which come from studying microstructure in materials. These differences include the possibility of a noncovex lower order term $V(x,u)$, which precludes a direct application of methods from convex optimization. We have developed novel strategies to adapt the split Bregman method to these more general problems, for example, by recasting the minimization problem as a gradient flow and using convexity splitting methods.

Our interest in solving the relaxed problem comes from the fact that the nonconvex functionals considered in this paper are connected to their relaxation through the notion of Young measures. This connection allows us to obtain information about the microstructures that arise in the original nonconvex problem. In particular, the Young measure associated with a minimizer of the relaxed problem provides information about the nature and the spatial distribution of microstructure in the original nonconvex problem. We need to justify that the discrete approximations given by   algorithm~\ref{algorithm1} do indeed provide useful information about the microstructures, on {\em scales smaller than the grid spacing},  in  
original nonconvex problem.
  For this justification we recall the results from \cite{pedregal1995} which assert that  if the sequence of approximations, $\{ u_h\}$, converges strongly to a minimizer of \cref{e:relaxation} as the size of the mesh, $h$, goes to zero, then the corresponding sequence of Young measures $\nu_h$ is a macroscopic approximation of the optimal measure of the generalized problem, \cref{e:generalized}. In other words, as long as we have a good approximation to our relaxed problem, then the corresponding Young measure, and consequently the microstructure, are well approximated.

In general, showing the strong convergence of $\{u_h\}$  is difficult and some results in this direction are \cite{french1990, nicolaides1995}. A useful technique is modifying/truncating  the gradients of the sequence $u_h$, to obtain a ``nearby" sequence $\{ \tilde{u}_h\}$ which converges strongly \cite{pedregal1996}. Similar techniques can be used to show that our numerical solution, and the corresponding Young measure, provide enough information to obtain a good approximation of the microstructures present in the original problem.

Although we have largely focused on scalar problems in 1 dimension, the underlying methods are `dimension-independent'. They do, however, rely on computing the quasiconvex-envelope of `gradient' part of the functional. 
This is challenging for  multi-dimensional, vector valued problems, i.e. functionals defined on mappings $u: \Omega^n \subset \mathbb{R}^n \to \mathbb{R}^m$ \cite{muller1999calculus}. On the other hand, our methods extend to functionals defined on vector valued functions of one variable, $u:\Omega \subset \R \rightarrow \R^m$, and multi-dimensional scalar valued functions, $u: \Omega^n \to \mathbb{R}$.  In the latter cases, the quasiconvexification is given by the convex envelope. For vector valued functions,  a generalized Hamiltonian can be found for the relaxed problem along with an equivalent system of ODE. Similarly, the split Bregman algorithm can be extended using a multidimensional shrink operator. Our work along these lines, as well as the connection between these results and a $\Gamma$--development \cite{Anzellotti1993} for the regularized functional $\int [\epsilon^2 u_{xx}^2 + W(u_x) + V(x,u)] dx$, will be presented elsewhere \cite{JV2}. Here we outline a numerical example for minimizing a non-convex functional defined on multi-dimensional scalar functions, using a split-Bregman algorithm along with convexity splitting.

\textit{Example 6}: Minimize $\displaystyle{I[u] = \int_\Omega \left[ \left| \sqrt{u_x^2+u_y^2} - 1 \right| + \frac{1}{4}(1-u^2)^2\right] dx dy}$ over \textit{BV} functions $u: \Omega \to \mathbb{R}$ where $\Omega = [-1,1]^2$ and $u(x,y) = u_0(x,y) = x y$ on $\partial \Omega$.  

$I$ is nonconvex in the gradient $(u_x,u_u)$ and the lower order `Allen-Cahn' term $(1-u^2)^2$ is nonconvex in $u$. The quasiconvexification is obtained by taking the convex envelope of the gradient term \cite{dacorogna2007direct}, to yield
$$
\overline{I}[u] = \int_\Omega \left[ \left(\sqrt{u_x^2+u_y^2} - 1\right)_+ + \frac{1}{4}(1-u^2)^2\right] dx dy.
$$
As before, we use the convexity splitting $\displaystyle{\frac{1}{4}(1-u^2)^2 = \frac{1}{4} + \frac{a}{2} u^2 -\left((1+a) \frac{u^2}{2} - \frac{u^4}{4}\right)}$. The final ingredient is a multi-dimensional shrink operator \cite{goldstein2009,JV2} that computes 
$$
\displaystyle{\arg\min_{d^x,d^y} \left(\sqrt{{d^x}^2+{d^y}^2} -1 \right)_+ + \frac{\gamma}{2} (d^x - \partial_x u -b^x)^2 + \frac{\gamma}{2} (d^y -\partial_y u- b^y)^2 }.
$$ 
We discretize our domain using a square grid with uniform spacing $\Delta$. In our split-Bregman routine we find it optimal to do one Gauss-Seidel step per each time step \cite{goldstein2009}. As per our heuristic, we choose the Bregman parameter $\gamma$, the spatial discretization $\Delta$ and the time step $h$ to be equal to each other $\gamma = h = \Delta$. 
\begin{figure}[htbp]
 \centering
 \subfloat[]
 {
     \includegraphics[width=0.45\textwidth,trim={2cm 0cm 2cm 0cm},clip]{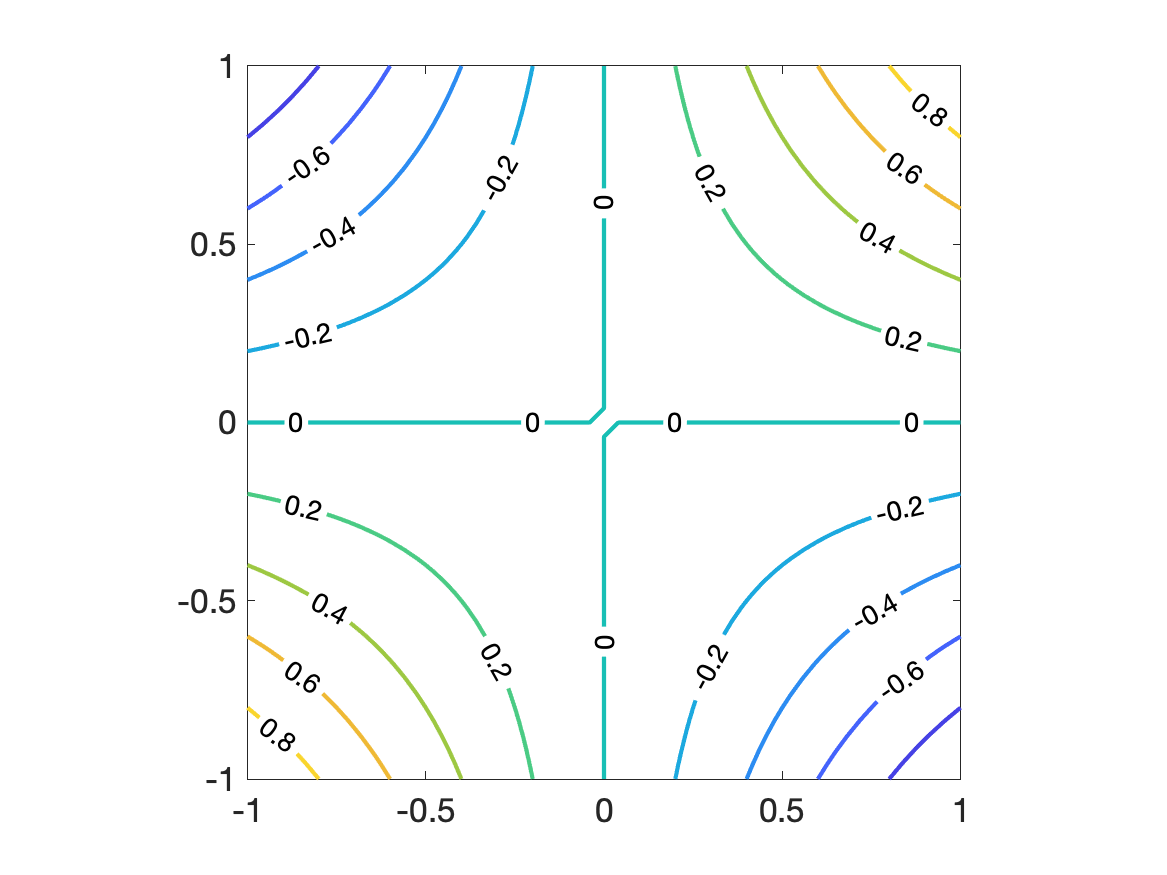}

 }
 \subfloat[]
 {
     \includegraphics[width=0.45\textwidth,trim={2cm 0cm 2cm 0cm},clip]{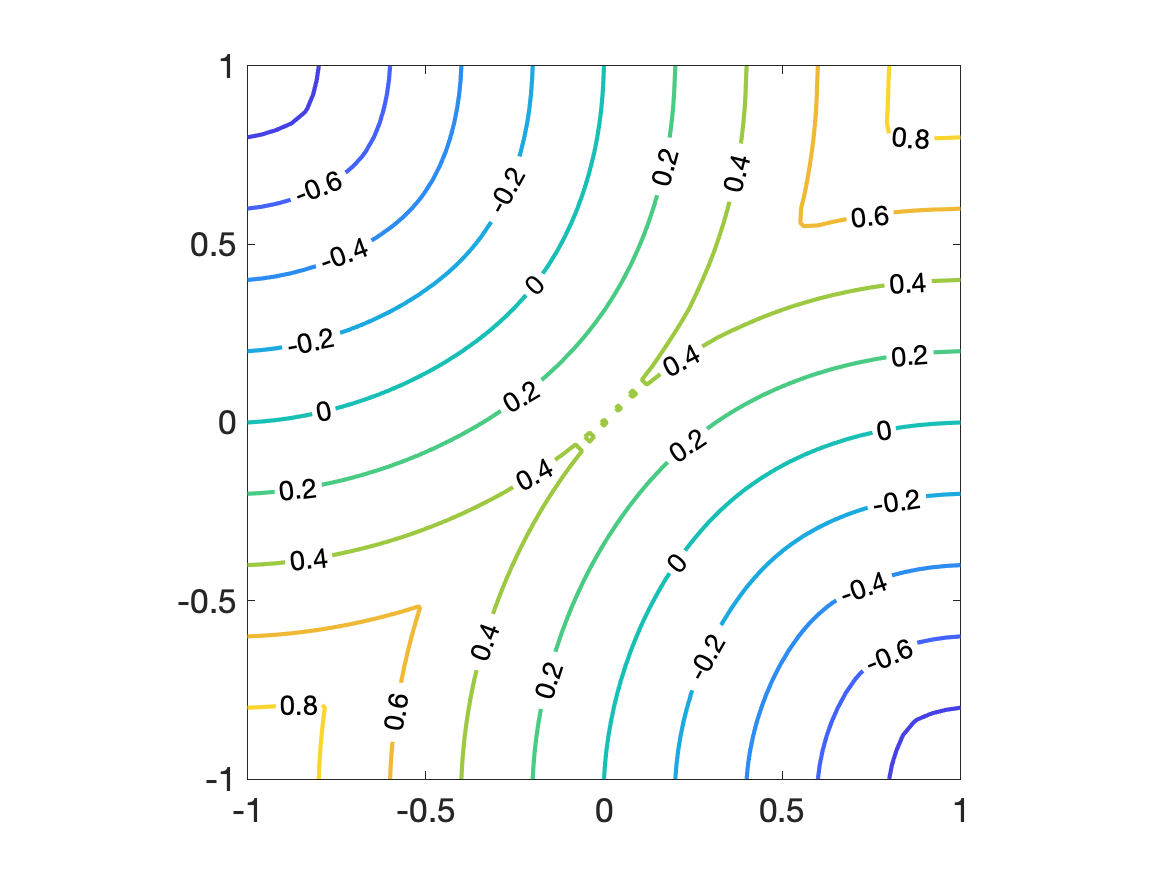}

 } \\
 \subfloat[]
 {
     \includegraphics[width=0.45\textwidth]{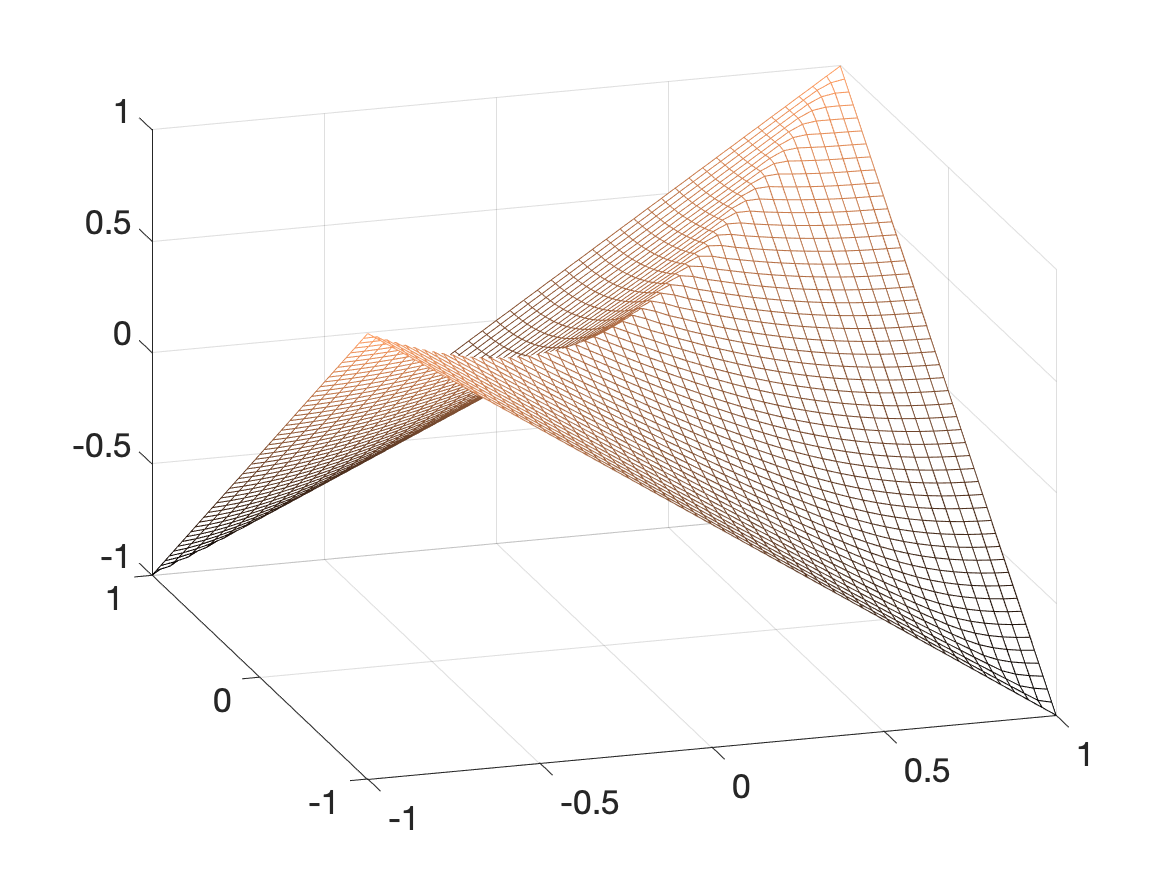}

 }
 \subfloat[]
 {
     \includegraphics[width=0.45\textwidth]{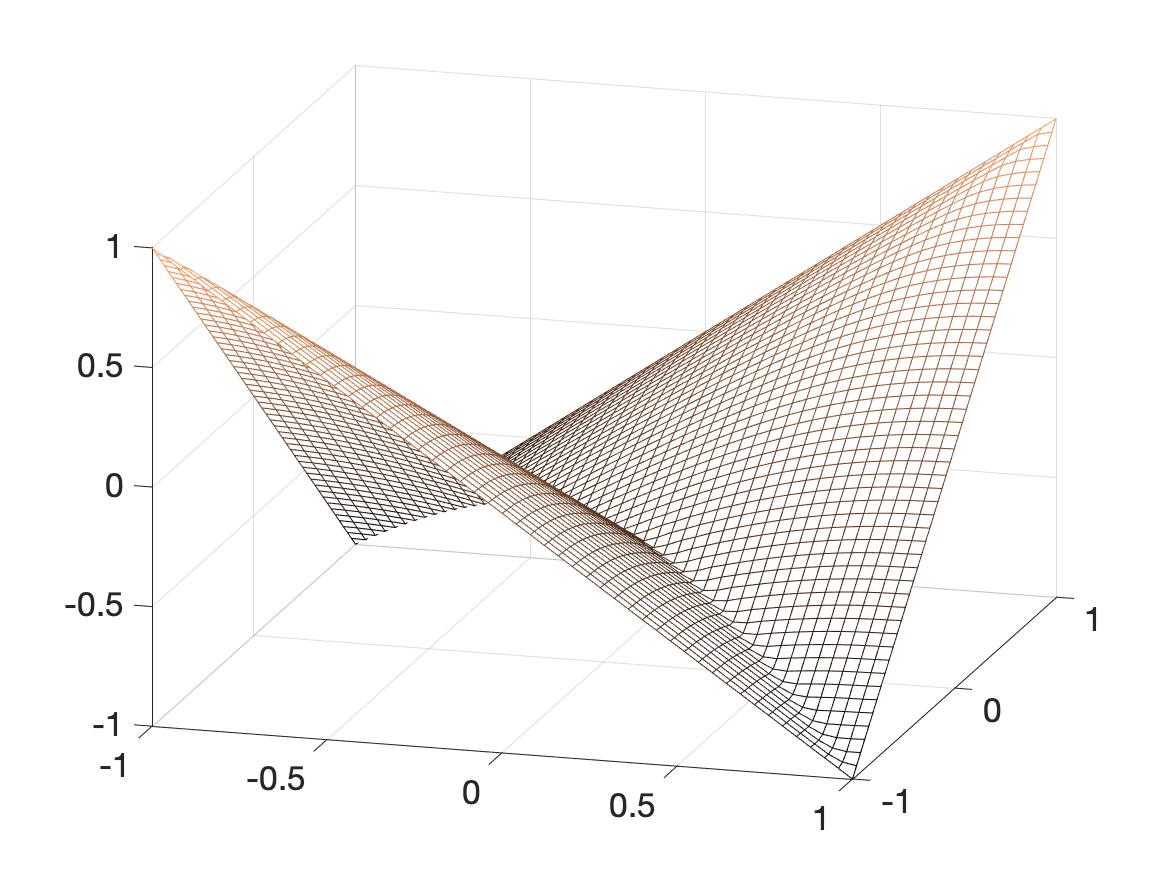}

 }
  \caption{Numerics for Example 6. (a) Contours of the initial condition $u_0(x,y) = xy$  (b) Contours of the minimizier $u_1(x,y)$ shown in (c). The energy $\bar{I}$ is nonconvex because of the lower order terms, and there are two energy minimizers, shown in (c) and (d) that are related by $u_2(x,y) = - u_1(x,-y)$.  The domain is $[-1,1]^2$ and parameters used in the computations are: $\Delta  = h = \gamma = 0.04,$ and $a =2.5$.}
\label{f:2dTVAllen-Cahn}
 \end{figure}
 
Putting everything together, the update for $u$ is
\begin{align*}
u^{k+1}_{i,j}  = & \frac{u^k_{i,j} + h\left[(1+a)u_{i,j}^k - (u_{i,j}^k)^3\right]}{1 + 4 \gamma h/\Delta^2 + a h} + \left(\frac{\gamma h}{\Delta^2 + a h \Delta^2 + 4 \gamma h} \right)\times \\
& \left[ u^{k+1}_{i-1,j} + u^{k+1}_{i,j-1} + u^k_{i+1,j} + u^k_{i,j+1} + \Delta(d^{x,k}_{i,j} - d^{x,k}_{i+1,j} + d^{y,k}_{i,j} - d^{y,k}_{i,j+1}) \right.\\
& - \left. \Delta(b^{x,k}_{i,j} - b^{x,k}_{i+1,j} + b^{y,k}_{i,j} - b^{y,k}_{i,j+1}) \right].
\end{align*}
The update for $d$ is given by a multidimensional shrink operator:
\begin{align*}
\nu^{x,k+1}_{i,j}  = & \frac{u^{k+1}_{i+1,j} - u^{k+1}_{i,j}}{\Delta} + b^{x,k}_{i,j}, \quad
\nu^{y,k+1}_{i,j}  =  \frac{u^{k+1}_{i,j+1} - u^{k+1}_{i,j}}{\Delta} + b^{y,k}_{i,j}, \\
\rho^{k+1}_{i,j} = & \sqrt{(\nu^{x,k+1}_{i,j})^2 + (\nu^{y,k+1}_{i,j})^2}, \\
(d^{x,k+1}_{i,j}, d^{y,k+1}_{i,j})= & \max\left(1-\frac{1}{\gamma\rho^{k+1}_{i,j}},\min\left(1,\frac{1}{\rho^{k+1}_{i,j}}\right)\right) (\nu^{x,k+1}_{i,j}, \nu^{y,k+1}_{i,j}).
\end{align*}
Note that $(d^x,d^y) = (\nu^x,\nu^y)$ for $0 \leq \rho \leq 1$, so there are no computational issues with overflow/underflow. The update for $b$ is given by ``adding back the noise" \cite{osher2005,goldstein2009}
\begin{align*}
(b^{x,k+1}_{i,j},b^{y,k+1}_{i,j}) & = (b^{x,k}_{i,j},b^{y,k}_{i,j}) + \left( \frac{u^{k+1}_{i+1,j} - u^{k+1}_{i,j}}{\Delta} -d^{x,k}_{i,j}, \frac{u^{k+1}_{i,j+1} - u^{k+1}_{i,j}}{\Delta} - d^{y,k}_{i,j} \right).
\end{align*}
Our numerical results are shown in Fig.~\ref{f:2dTVAllen-Cahn}.

Example 6 illustrates the application of our method for multi-dimensional problems in mechanics and microstructure formation. In a related vein, Zhou and Bhattacharya  \cite{Zhou2020} have developed an alternative method for multi-dimensional problems, that also employs a decoupling between the field $u$ and its gradient $\nabla u$. Their method uses the alternating direction method of multipliers (ADMM) in contrast to our approach using the split-Bregman method. Their method is parallelizable and uniquely suited to implementation on GPUs \cite{Zhou2020}. It will be interesting, for future work, to develop similar, parallelizable algorithms based on our methods. 

%%%%%%%%%%%%%%%%%%%%%%%%%%%%%%%%%%%%%%%%%%%%%%%%%%%%%%%%%%%%%%%%
% Appendix
%%%%%%%%%%%%%%%%%%%%%%%%%%%%%%%%%%%%%%%%%%%%%%%%%%%%%%%%%%%%%%%%

\appendix
\section{Convergence of the modified split Bregman algorithm} \label{s:AppendixAA}

Here we restate known results about the split Bregman algorithm \cite{osher2005, yin2008, goldstein2009} and adapt them to our setting. For convenience we use the following notation:  $U = (u,d) \in X$, $E(U) = \overline{\mathcal{W}}(d) + \tilde{V}(u)$, where again $\overline{\mathcal{W}}(d)$ is the convexification of $\int W(d) dx$, and $\tilde{V}$ is either equal to $\mathcal{V}$ if this potential is convex, or it is equal to $\mathcal{V}_+[u] + ( \delta \mathcal{V}_-[u^k], u - u^k) + \mathcal{V}_-[u^k]$ if we are using a convex splitting. We also consider the linear operator $BU =d- \partial_x u $ with the corresponding functional $H(U) = \frac{\lambda}{2} \|BU\|^2 $, and the corresponding (penalized) unconstrained variational problem
\begin{equation}\label{e:unconstrained}
 \min_U F(U) = \min_U E(U) + H(U). 
 \end{equation}
The main goal of this section is to show that  sequence of iterates generated by the split Bregman algorithm
converges to the solution of the original constrained problem,
$ \min_U E(U) \quad \mbox{subject to} \quad u_x = d$ or equivalently $H(U) = 0$.

In other words, the following results show that the modified split Bregman scheme, and consequently each iterate in our 'gradient flow' algorithm, is well defined. From this we can conclude that the solution, $u_h$, we obtain from our numerical scheme is indeed a minimizer of the discretized version of the relaxed problem, \cref{e:relaxation}. 

To accomplish this task we will need to consider the two algorithms presented in \cref{t:algorithms}, where the term $D^{P^k}_E(U,U_k)$ represents the Bregman distance given by 
$$ D^{P^k}_E(U,U_k) = E(U) - E(U^k) - \langle P^k, U -U^k\rangle.$$
\begin{table}[h]

    \begin{minipage}{0.48\linewidth}
    { \footnotesize
    \begin{tabular}{ | p{1.1cm} p{4.25cm}  |}
\hline
\multicolumn{2}{|c|}{ \bf Bregman Iteration}\\[2ex]
\hline
\hline
 &  $U^0 = 0 $\quad $P^0=0 $\\[3ex]
$U^{k+1} =$ & argmin$_U \;D^{P^k}_E(U,U^k) + \frac{\lambda}{2} \| BU\|^2$ \\[2ex]
 $P^{k+1} = $ & $P^k - \lambda B^TBU^{k+1}$\\[2ex]
 
\hline
\end{tabular}
 \label{t:split}}

 \end{minipage}
 \qquad
 \begin{minipage}{0.45\linewidth}
   
    {\footnotesize
    \begin{tabular}{| p{1.1cm} p{3.9cm}|}
\hline
\multicolumn{2}{| c|}{\bf Error Correcting Algorithm}\\[2ex]
\hline
\hline

  &$U^0 =0$ \quad  $b^0 =0$\\[3ex]
$U^{k+1} = $ & argmin$_U \;E(U) + \frac{\lambda}{2} \| BU-b^k\|^2$ \\[2ex]
 $b^{k+1} = $ & $b^k - BU^{k+1}$\\[2ex]

 \hline
\end{tabular}    
     \label{t:modified}}

     \end{minipage}

    \caption{\footnotesize A) Bregman iteration. B) Error correcting algorithm.}
    \label{t:algorithms}
     \end{table}

To prove the above claim we take the following steps.
\begin{enumerate}
\item Show equivalence between the Bregman Iteration and the Error Correcting Algorithm.
\item Show that the sequence of Bregman iterates $\{u^k\}$ is also a minimizing sequence of $H(u)$.
\item Use item 2) to show that the solutions to the Error Correcting Algorithm converge to a solution of the constrained problem, and thus from 1) so do the Bregman iterates.
\end{enumerate}

Here again we let $X $ denote a Banach space and we consider functionals $E$ and $H$ that satisfy the following assumptions.
\begin{hypothesis}\label{h:mainfunc}
Let $E: X \rightarrow \R $ and $H: X \rightarrow \R$ be convex functionals with the property that if we look at $F(U) = E(U) + H(U)$ then $F(U)$ is coercive. That is there exist constants $1 \leq q <p$, $1 \leq r$, $\alpha_1, \beta _1>0 $ and $\alpha_2, \alpha_3\in \R$ such that $$F(U) = F(u,d) \geq \beta_1 | d|^r + \alpha_1 | \nabla u |^p + \alpha_2 |u|^q + \alpha_3.$$
\end{hypothesis}

\begin{hypothesis}\label{h:constraint}
 Let $H(U) =(\lambda/2) \|BU\|^2 $, where $B: X \rightarrow L^2$ is a bounded linear operator, define a functional satisfying $\min_{U \in X} H(U) = 0$.
 \end{hypothesis}
 
\subsection{Equivalence between algorithms}
All proofs in this subsection are based on the results from
 \cite{yin2008}.

To prove the equivalence between the two algorithms we first need this next lemma.
\begin{lemma}\label{l:welldefined}
Suppose $E$ and $H$ satisfy \cref{h:mainfunc,h:constraint}. Then, for each Bregman iteration defined using these functionals and given by the algorithm in \cref{t:algorithms} there exists a minimizer $U_k$, and subgradients $P^k, R^K$ of $\partial E(U_k)$ and $\partial H(U_k)$, respectively such that $$P^{k-1} = P^k +R^k.$$ 
\end{lemma}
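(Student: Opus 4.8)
The plan is to recognize the Bregman subproblem as an unconstrained convex minimization with a linear ``tilt'', establish existence of its minimizer by the direct method, and then extract the subgradient identity from the first-order optimality condition. Unfolding the Bregman distance, the iterate $U^k$ (written $U_k$ in the statement) is, after discarding the additive constants $-E(U^{k-1}) + \langle P^{k-1}, U^{k-1}\rangle$ that do not depend on $U$, the minimizer of
$$\Phi(U) := E(U) + H(U) - \langle P^{k-1}, U\rangle, \qquad H(U) = \tfrac{\lambda}{2}\|BU\|^2.$$
For this to make sense I would first check, by induction, that $P^{k-1}$ is a genuine element of the dual space: this holds for $P^0 = 0$, and propagates because $B$ (hence its adjoint $B^T$) is bounded by \cref{h:constraint}, so each update subtracts the dual element $\lambda B^T B U^k$.

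Next I would prove existence of a minimizer $U^k$ of $\Phi$ by the direct method. Coercivity of $\Phi$ follows from coercivity of $F = E + H$ in \cref{h:mainfunc}: the extra term satisfies $-\langle P^{k-1}, U\rangle \geq -\|P^{k-1}\|\,\|U\|$, a lower-order perturbation dominated by the strictly superlinear growth $\alpha_1 |\nabla u|^p$ (recall $p \geq 2 > 1$) together with the quadratic penalty $H$, so that $\Phi(U) \to \infty$ as $\|U\| \to \infty$. Weak lower semicontinuity of $\Phi$ follows from convexity of $E$ and $H$ (a convex, lower semicontinuous functional is weakly lower semicontinuous) and weak continuity of the linear term. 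A minimizing sequence then admits a weakly convergent subsequence whose limit is a minimizer, giving the claimed $U^k$.

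Finally I would read off the subgradient relation from $0 \in \partial \Phi(U^k)$. Since the linear functional is Gâteaux differentiable, this reduces to $P^{k-1} \in \partial(E+H)(U^k)$, and the subdifferential sum rule yields
$$P^{k-1} \in \partial E(U^k) + \partial H(U^k),$$
so there exist $P^k \in \partial E(U^k)$ and $R^k \in \partial H(U^k)$ with $P^{k-1} = P^k + R^k$, which is the assertion. Because $H$ is a smooth quadratic, $\partial H(U^k) = \{\lambda B^T B U^k\}$ is a singleton, forcing $R^k = \lambda B^T B U^k$ and hence $P^k = P^{k-1} - \lambda B^T B U^k$; this reconciles the subgradient $P^k$ produced by optimality with the explicit update in \cref{t:algorithms} and closes the induction.

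I expect the main technical point to be justifying the sum rule $\partial(E+H)(U^k) = \partial E(U^k) + \partial H(U^k)$, which in a Banach space requires a constraint qualification. Here it is benign: $H$ is finite and continuous (indeed Fréchet differentiable) everywhere, so the Moreau--Rockafellar qualification holds automatically at any point of $\mathrm{dom}\,E$. A secondary point worth dispatching carefully is that coercivity survives the linear perturbation even when $q = 1$ or $r = 1$ in \cref{h:mainfunc}: this is handled by noting that the $p$-growth in $\nabla u$ controls $u$ through the fixed boundary data (a Poincar\'e-type estimate), while the quadratic term $H$ controls $d$ superlinearly, so all components enjoy strictly superlinear growth that dominates $\|P^{k-1}\|\,\|U\|$.
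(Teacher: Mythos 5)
Your proof is correct and follows essentially the same route as the paper: rewrite the Bregman subproblem as a convex, coercive, lower semicontinuous functional, obtain a minimizer by the direct method, and read off $P^{k-1}\in\partial E(U^k)+\partial H(U^k)$ from the first-order optimality condition together with the subdifferential sum rule. You are in fact somewhat more careful than the paper, which asserts coercivity and the sum rule without comment, whereas you justify the former against the linear tilt $-\langle P^{k-1},U\rangle$ and the latter via the continuity of $H$.
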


\begin{proof}
Since $H(U) =  \frac{\lambda}{2} \|B U\|^2$ we note that the functional in each Bregman iteration is given by
\begin{align*}
Q_k(U) &= D^{P^{k-1}}_E(U,U^{k-1}) + H(U),\\
Q_k(U) &= E(U) - E(U^{k-1}) - \langle P^{k-1}, U-U^{k-1} \rangle + \frac{\lambda}{2} \|B U\|^2.
\end{align*}
It is not hard to check, using the definition for $F(U) = E(U)+H(U)$ and properties of the Bregman distance, that the functional $Q_k(U): X \rightarrow \R $ is convex, coercive, bounded from below, and  lower semicontinuous. Consequently each Bregman iteration $Q_k(U)$ has a minimizer $U_k$ in $X$. Moreover, the subgradient optimality condition,
\[ 0 \in  \partial Q_k(U_k) = \partial E(U_k ) - P^{k-1} + \partial H(U_k), \]
gives us
\[ P^{k-1} \in \partial E(U_k ) + \partial H(U_k), \]
showing that there is $P^k \in  \partial E(U_k )$ and $R^k \in  \partial H(U_k)$ such that $P^{k-1}= P^k + R^k$.

\end{proof}

\begin{remark} Notice that because of the relation $P^{k-1} = P^k + R^k$ we also have that $P^k = -\sum_{m=1}^k R^m$. We will use this relation in~\cref{l:uniformbound}.
\end{remark}
The following proposition establishes the equivalence between the Error Correcting Algorithm and the Bregman Iteration.
\begin{lemma}
Suppose the functionals $E$ and $H$ satisfy  \cref{h:mainfunc,h:constraint}. Then, with these functionals the two algorithms from \cref{t:algorithms} are equivalent.
\end{lemma}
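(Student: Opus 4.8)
The plan is to show that, under a suitable identification of the auxiliary variables, the two schemes solve \emph{the same} convex subproblem at every iteration and therefore generate identical sequences $\{U^k\}$. The key is the correspondence $P^k = \lambda B^{T} b^k$ between the subgradient variable $P^k$ of the Bregman iteration and the residual variable $b^k$ of the Error Correcting Algorithm, which I would establish by induction on $k$. The base case is immediate, since both algorithms are initialized with $U^0 = 0$ and with $P^0 = 0$, $b^0 = 0$, so that $P^0 = \lambda B^{T} b^0$ holds trivially.

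For the inductive step, assume $P^k = \lambda B^{T} b^k$ and compare the two objective functionals minimized at step $k+1$. Writing out the Bregman objective using the definition of the Bregman distance gives $Q_k(U) = E(U) + \frac{\lambda}{2}\|BU\|^2 - \langle P^k, U\rangle + c_1$, where $c_1 = \langle P^k, U^k\rangle - E(U^k)$ is independent of $U$. Expanding the Error Correcting objective and using the defining adjoint relation $\langle BU, b^k\rangle = \langle U, B^{T} b^k\rangle$ together with the inductive hypothesis $\lambda B^{T} b^k = P^k$ gives $\tilde{Q}_k(U) = E(U) + \frac{\lambda}{2}\|BU\|^2 - \langle P^k, U\rangle + c_2$, with $c_2 = \frac{\lambda}{2}\|b^k\|^2$ again independent of $U$. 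Thus $Q_k$ and $\tilde{Q}_k$ differ only by an additive constant, so they share the same (nonempty, by \cref{l:welldefined}) set of minimizers, and the two algorithms may select the same $U^{k+1}$. Finally, the update rules preserve the correspondence: since $\partial H(U) = \lambda B^{T} B U$, I compute $P^{k+1} = P^k - \lambda B^{T} B U^{k+1} = \lambda B^{T}(b^k - B U^{k+1}) = \lambda B^{T} b^{k+1}$, closing the induction.

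I expect the only genuine subtlety to be the non-uniqueness of the minimizer when $E$ fails to be strictly convex: the argmin is then a set rather than a point, so ``equivalence'' must be read as equality of the two minimizer sets at each step rather than as a pointwise identity of iterates. Because $Q_k$ and $\tilde{Q}_k$ differ by a constant, these sets coincide \emph{exactly}, so any consistent selection rule applied to both schemes yields identical sequences, and the equivalence is unaffected. The remaining care is purely bookkeeping of the dual pairing, i.e.\ interpreting $P^k$ as an element of $X^{*}$ and $B^{T}: L^2 \to X^{*}$ as the adjoint of $B$, which is exactly the setting in which \cref{l:welldefined} was proved.
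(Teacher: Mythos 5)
Your proof is correct and follows essentially the same route as the paper's: an induction on $k$ establishing the correspondence $P^k = \lambda B^{T} b^k$ (the paper writes this as $P^k = \lambda B^{*}(b^{k-1}-BV^k)$, which is the same identity since $b^k = b^{k-1}-BV^k$) and concluding that the two objective functionals at each step differ by an additive constant. The only divergence is in how non-uniqueness of minimizers is handled: you assume a consistent selection rule so that both schemes pick the same iterate, whereas the paper proves in \cref{l:diff_min} that any two minimizers $U^k, V^k$ of the common objective satisfy $B^{T}BU^k = B^{T}BV^k$, so its induction closes regardless of which minimizers the two algorithms happen to select.
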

\begin{proof}

To show the equivalence between the Bregman iteration, with functional $F^k_B(U)$, and the Error Correcting algorithm, with functional $F^k_{EC}(U)$, we proceed by induction. We will denote by $U$ the solutions to the Bregman iteration and by $V$ the solutions to the Error correcting algorithm. Here $P^k$ again refers to the subgradient for $E(U)$ evaluated at the minimizer $U^k$ of the functional $F^k_B$.

It is straightforward to check that for $k=1$ both algorithms reduce to finding a minimizer of the same functional,
\[ \min_U E(U) + \frac{\lambda}{2} \| BU\|^2,\]
so the base case is trivial.

In order to prove the induction step we first need to show that
 \begin{enumerate}
 \item  $B^TBU^k = B^TBV^k$, and that
 \item $P^k = \lambda B^T(b^{k-1} - BV^k)$.
 \end{enumerate}
 Notice that even for the base case, where we already know that the functionals are equivalent, it is not immediately clear that the first results holds. Indeed, if $B$ has a nontrivial kernel, the minimizer for the functional $E(U) + H(U)$ is not unique. We leave the proof of this first item to \cref{l:diff_min} where it is shown that if for any $k$ the functionals $F^k_B(U)$ and $F^k_{EC}(U)$ differ by constant, and thus the two algorithms are equivalent, then any two minimizers, $U^k$ and $V^k$, satisfy $B^TBU^k = B^TBV^k$.

Next we prove item 2). Given that $B^*BU^1 = B^*BV^1$ and recalling the for the initial iterative step, $b^0=0$, it is immediate that $P^1 = \lambda B^*(b^0 - BV^1)$. Moreover, since we know $B^*BU^k = B^*BV^k$ holds we can use induction and the definition of $P^{k}$ to prove item 2):

\[  P^k = P^{k-1} - \lambda B^*BU^k   =  \lambda B^*(b^{k-2} - BV^{k-1}) - \lambda B^*BV^k  =  \lambda B^*(b^{k-1} - BV^k).\]

We now proceed to show the equivalence of the two algorithms via induction. To that end, suppose that items 1), and 2) above hold for some $k$. Then starting with the Bregman iteration
\begin{align*}
 \min_U E(U) & - E(U^k)  - \langle P^k, U - U^k \rangle + \frac{\lambda}{2} \|BU\|^2 \\
=& \min_U E(U) - \langle P^k, U \rangle + \frac{\lambda}{2} \|BU\|^2 + C\\
=& \min_U E(U) - \lambda \langle B^*( b^{k-1} - BV^k ) ,  U \rangle + \frac{\lambda}{2} \|BU\|^2 + C\\
=&\min_U E(U) - \lambda \langle ( b^{k-1} - BV^k ) , BU \rangle + \frac{\lambda}{2}\|BU\|^2 + \frac{\lambda}{2} \|  b^{k-1} - BV^k \|^2 +  \bar{C}\\
=&\min_U E(U) + \frac{\lambda}{2} \|  ( b^{k-1} - BV^k) - BU \|^2 +  \bar{C}\\
=&\min_U E(U) + \frac{\lambda}{2} \|  b^k- BU \|^2 +  \bar{C}.
\end{align*}

Where on the third line we used 2) from the induction hypothesis, and in the last line we used the definition of $b^k$.
Since the two functionals differ by a constant the two algorithms are equivalent.

\end{proof}

\begin{lemma}\label{l:diff_min}
Suppose the functionals $E(U)$ and $H(U)= \frac{\lambda}{2}\| b- BU\|^2$ satisfy \cref{h:mainfunc}. If $U$ and $V$ are two distinct minimizers of 
\begin{equation}\label{e:energy} E(U) +  \frac{\lambda}{2} \|b - BU\|^2 +C,
\end{equation}
 where $C\in \R$, $\lambda >0 $, and $b \in L^2$, then we must have $B^TBU = B^TBV$.
\end{lemma}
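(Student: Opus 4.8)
The plan is to exploit the fact that $G(U) := E(U) + \frac{\lambda}{2}\|b - BU\|^2$ is a convex functional: it is the sum of the convex $E$ (from \cref{h:mainfunc}) and the convex quadratic $U \mapsto \frac{\lambda}{2}\|b-BU\|^2$. Hence its set of global minimizers is convex, and in particular the midpoint $M := \tfrac12(U+V)$ of two minimizers $U,V$ is again a minimizer, with $G(M) = G(U) = G(V) =: G^*$. The point of the argument is that the quadratic term $H(U) = \frac{\lambda}{2}\|b - BU\|^2$ is \emph{strictly} convex in every direction $W$ with $BW \neq 0$, and it is exactly this strict convexity that will force $B(U-V)=0$.

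First I would record the behaviour of $H$ at the midpoint. Writing $b - BM = \tfrac12\big((b-BU)+(b-BV)\big)$ and applying the parallelogram law in $L^2$ to the vectors $a = b-BU$ and $c = b-BV$, together with $a-c = -B(U-V)$, I obtain
$$H(M) = \tfrac12 H(U) + \tfrac12 H(V) - \tfrac{\lambda}{8}\|B(U-V)\|^2.$$
This is the only genuine computation; the negative ``defect'' term $-\tfrac{\lambda}{8}\|B(U-V)\|^2$ is the crux of the whole proof.

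Next I would combine this with the convexity bound $E(M) \le \tfrac12\big(E(U)+E(V)\big)$ to get
$$G^* = E(M)+H(M) \le \tfrac12\big(E(U)+E(V)\big) + \tfrac12\big(H(U)+H(V)\big) - \tfrac{\lambda}{8}\|B(U-V)\|^2 = G^* - \tfrac{\lambda}{8}\|B(U-V)\|^2.$$
Since $\lambda > 0$, this forces $\|B(U-V)\|^2 \le 0$, hence $B(U-V)=0$, i.e. $BU = BV$. Applying $B^T$ then yields $B^TBU = B^TBV$, which is the claim (and in fact the stronger identity $BU=BV$ holds).

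The argument is short because the structure does the work, and I do not expect a serious obstacle. The two points to state carefully are (i) that $M$ is again a minimizer, which is immediate from convexity of $G$, and (ii) the parallelogram identity for $H$. The only subtlety worth flagging is that $G$ must be finite and convex along the entire segment joining $U$ and $V$, so that the midpoint comparison is legitimate; this is guaranteed since $E, H: X \to \R$ are real-valued convex functionals by \cref{h:mainfunc,h:constraint}. An alternative route would be to observe that $t \mapsto G(V + t(U-V))$ is constant while being a sum of two convex functions of $t$, forcing each summand to be affine and hence the leading coefficient $\tfrac{\lambda}{2}\|B(U-V)\|^2$ of the quadratic $H$ to vanish; but the midpoint computation above is cleaner and entirely self-contained.
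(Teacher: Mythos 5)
Your proof is correct, but it takes a genuinely different (and more direct) route than the paper's. The paper also restricts to the segment $Z(\alpha)=\alpha U+(1-\alpha)V$, but instead of the parallelogram identity it bounds the cross term via Cauchy--Schwarz, $\langle b-BU,\,b-BV\rangle \le \|b-BU\|\,\|b-BV\|$, which yields only the weaker defect $-\tfrac{\lambda}{2}\alpha(1-\alpha)\bigl(\|b-BU\|-\|b-BV\|\bigr)^2$; from this the paper can conclude only that $\|b-BU\|=\|b-BV\|$ and that the whole segment consists of minimizers, and it then needs a second step — differentiating the constant function $\alpha\mapsto\|b-BZ(\alpha)\|^2$ at both endpoints in the direction $W=V-U$ and subtracting — to arrive at $\langle B^TBU-B^TBV,\,W\rangle=0$. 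Your parallelogram computation replaces the inequality with an exact identity, so the defect term is $-\tfrac{\lambda}{8}\|B(U-V)\|^2$ and the conclusion $BU=BV$ (equivalent to $B^TBU=B^TBV$, since $\ker B^TB=\ker B$) drops out in one step, with no need for the directional-derivative argument. What the paper's version buys is essentially nothing extra here; your argument is shorter, self-contained, and avoids the slightly delicate "gradient of $H$ along the segment" step. The one hypothesis you rely on that should be stated explicitly is the convexity of $E$ itself (not merely of $E+H$), but this is exactly what the paper's hypotheses provide, and the paper's own proof uses it in the same way.
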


\begin{proof}
Given that $U \neq V$, consider a linear combination of these two elements $Z = \alpha U +(1-\alpha)V \in X$, with $\alpha \in [0,1]$. Letting $$m = \min_{U \in X} E(U) + \frac{\lambda}{2} \|b- BU\|^2 +C,$$
we see that
\begin{align*}
E(Z) +  &\frac{\lambda}{2} \|b- BZ\|^2 +C \\
 &  \leq \alpha E(U) + (1-\alpha) E(V) +C \\
& + \frac{\lambda}{2} \left( \alpha^2 \|b-BU\|^2 + 2 \alpha(1-\alpha) \|b-BU\|\; \|b-BV\| + (1-\alpha)^2 \|b-BV\|^2 \right)\\[1.5ex]
\leq & \alpha \left ( m - \frac{\lambda}{2} \|b-BU\|^2 \right ) + ( 1- \alpha)  \left ( m - \frac{\lambda}{2} \|b-BV\|^2 \right ) \\
& +  \frac{\lambda}{2} \left( \alpha^2 \|b-BU\|^2 + 2 \alpha(1-\alpha) \|b-BU\|\; \|b-BV\| + (1-\alpha)^2 \|b-BV\|^2 \right)\\[1.5ex]
\leq & m + \frac{\lambda}{2} \left( \alpha(\alpha-1) \|b-BU\|^2 + 2 \alpha(1-\alpha) \|b-BU\|\; \|b-BV\| \right. \\
& \left. + \alpha(\alpha-1) \|b-BV\|^2 \right)\\[1.5ex]
\leq & m - \frac{\lambda}{2} \alpha ( 1- \alpha) \left( \|b-BU \| - \|b-BV\| \right )^2.
\end{align*}
This last inequality implies that $\|b-BU \| = \|b-BV\|$ and that every element in the line $Z(\alpha) = \alpha U + (1-\alpha)V $ is also a minimizer.  In particular, it follows that $ \|b-BZ(\alpha) \|^2$ is constant for all $\alpha \in [0,1]$. Therefore, the gradient of $H(U) = \|b-BU\|^2$ at $U$ in the direction of $W = V-U \neq 0$ and the gradient at $V$ in the direction of $-W$ are both zero, i.e.
\begin{align*}
\partial H(U)\mid_W =& 2\langle B^T(b-BU), W \rangle =0,\\
\partial H(V)\mid_{-W} =& 2\langle B^T(b-BV), -W \rangle =0.
\end{align*}
Subtracting these results we see that $\langle B^TBU - B^TBV, W \rangle =0$.
\end{proof}

\subsection{Properties of Bregman Iteration}
The main goal of this section is to show that the sequence of Bregman iterates,$\{ U^k\}$, generated from the algorithm in \cref{t:algorithms},  is also a minimizing sequence of $H(U)$. We state this more precisely in the following proposition.

\begin{proposition}
Suppose we have functionals $E$ and $H$ that satisfy \cref{h:mainfunc,h:constraint}. Then, the sequence $\{U^k \}$ of iterates generated by the Bregman iteration is also a minimizing sequence for $H(U)$. In particular, the sequence converges weakly to a function $\tilde{U}$ satisfying $\|B\tilde{U}\|=0$.
\label{prop:hminimizing}
\end{proposition}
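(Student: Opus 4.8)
The plan is to follow the now-standard analysis of Bregman iterations (as in \cite{osher2005,yin2008}), adapted to the homogeneous constraint $BU=0$. The engine of the proof is a single monotonicity identity obtained by tracking the Bregman distance $D_E^{P^k}(U^*,U^k)$ from the iterates to an arbitrary \emph{feasible} point $U^*$, i.e.\ one with $BU^*=0$; such a point exists by \cref{h:constraint}, since $\min_U H(U)=0$. First I would record that, by \cref{l:welldefined}, for every $k\ge 1$ the iterate $U^k$ comes equipped with genuine subgradients $P^k\in\partial E(U^k)$ and $R^k=\lambda B^{T}BU^k\in\partial H(U^k)$ satisfying $P^{k-1}=P^k+R^k$; hence each $D_E^{P^k}(\cdot,U^k)$ is a bona fide (nonnegative) Bregman distance for $k\ge 1$.

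Using the update $P^k=P^{k-1}-\lambda B^{T}BU^k$ together with the feasibility $BU^*=0$, a short manipulation then yields the key identity
\[
 D_E^{P^{k-1}}(U^*,U^{k-1}) - D_E^{P^k}(U^*,U^k) = D_E^{P^{k-1}}(U^k,U^{k-1}) + \lambda\|BU^k\|^2 .
\]
The only inputs are the definition of the Bregman distance, the subgradient update, and $BU^*=0$, which is exactly what converts the cross term into $\lambda\|BU^k\|^2=2H(U^k)$. Since every term on the right-hand side is nonnegative for $k\ge 2$, the scalar sequence $k\mapsto D_E^{P^k}(U^*,U^k)$ is nonincreasing and bounded below by $0$; summing the identity and using the nonnegativity of $D_E^{P^1}(U^*,U^1)$ gives $\sum_{k\ge 2}\lambda\|BU^k\|^2\le D_E^{P^1}(U^*,U^1)<\infty$. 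Consequently $\|BU^k\|\to 0$, i.e.\ $H(U^k)\to 0=\min_U H(U)$, which is precisely the assertion that $\{U^k\}$ is a minimizing sequence for $H$.

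It then remains to promote this to weak convergence. I would first argue that $\{U^k\}$ is bounded in $X$. By the coercivity in \cref{h:mainfunc} it suffices to bound $F(U^k)=E(U^k)+H(U^k)$ from above; the term $H(U^k)$ is already controlled (it tends to $0$), so the task reduces to bounding $E(U^k)$. From $P^k\in\partial E(U^k)$ and convexity one has $E(U^k)\le E(U^*)-\langle P^k,U^*-U^k\rangle$, so everything hinges on the pairing $\langle P^k,U^*-U^k\rangle$. Here I would use the representation $P^k=-\sum_{m=1}^k R^m=-\lambda B^{T}B\sum_{m=1}^k U^m$ recorded in the Remark following \cref{l:welldefined}, which, after invoking $BU^*=0$, turns the pairing into $\lambda\langle\sum_{m=1}^k BU^m,\,BU^k\rangle$. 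Once boundedness of $\{U^k\}$ is secured, reflexivity of $X$ furnishes a weakly convergent subsequence $U^{k_j}\rightharpoonup\tilde U$; since $H$ is convex and continuous it is weakly lower semicontinuous, whence $H(\tilde U)\le\liminf_j H(U^{k_j})=0$ and therefore $\|B\tilde U\|=0$. (Passage from the subsequence to the full sequence can be upgraded afterwards via uniqueness of the limit, using the fixed-point characterization of the minimizer.)

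The main obstacle is exactly this boundedness step, namely controlling $\langle\sum_{m=1}^k BU^m,\,BU^k\rangle$. The crude estimate $|\langle\sum_{m=1}^k BU^m,BU^k\rangle|\le\bigl(\sum_{m=1}^k\|BU^m\|\bigr)\|BU^k\|\le C\sqrt{k}\,\|BU^k\|$, obtained from Cauchy--Schwarz and $\sum_k\|BU^k\|^2<\infty$, need \emph{not} remain bounded, so a finer, summation-by-parts type estimate exploiting the additive structure of the $R^m$ (the content of the separate uniform-bound argument) is required. I expect this to be the delicate part; the monotonicity identity and the passage to the weak limit are then routine.
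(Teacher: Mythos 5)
Your overall route coincides with the paper's: the telescoping control of $D_E^{P^k}(U^*,U^k)$ against a feasible point is exactly the content of \cref{l:properties} and \cref{l:minimizing} (you obtain an identity rather than an inequality because you use the actual gradient $R^k=\lambda B^{T}BU^k$ of the quadratic $H$, whereas the paper only invokes the subgradient inequality $\langle R^k,U-U^k\rangle\le H(U)-H(U^k)$; this is cosmetic), and the first half of your argument --- summability of $\lambda\|BU^k\|^2$, hence $H(U^k)\to 0$ --- is correct and complete.

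The genuine gap is the boundedness step, which you explicitly leave open; it is needed for the second assertion of the proposition, so as written the proof is incomplete. The missing ingredient is not a summation-by-parts argument but the monotonicity $H(U^k)\le H(U^{k-1})$ (item 1 of \cref{l:properties}), which upgrades the summability $\sum_{m}H(U^m)\le E(U^*)$ to the pointwise decay $kH(U^k)\le\sum_{m\le k}H(U^m)\le E(U^*)$, i.e.\ $\|BU^k\|=\rmO(k^{-1/2})$. With this, even your ``crude'' bound closes: $\bigl|\langle\textstyle\sum_{m=1}^k BU^m,BU^k\rangle\bigr|\le \sqrt{k}\,\bigl(\sum_m\|BU^m\|^2\bigr)^{1/2}\|BU^k\|\le C\sqrt{k}\cdot k^{-1/2}=C$, so $E(U^k)$ is bounded and coercivity (\cref{h:mainfunc}) bounds $\|U^k\|_X$. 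The paper's \cref{l:uniformbound} packages the same cancellation differently, via Young's inequality $\langle BU^m,BU^k\rangle\le\tfrac12(\|BU^m\|^2+\|BU^k\|^2)$, so that the cross term is controlled by $kH(U^k)+\sum_m H(U^m)\le 2E(\tilde U)$ using \cref{r:properties}, yielding $E(U^k)+H(U^k)\le 5E(\tilde U)$. One further caution: your closing remark that convergence of the full sequence follows ``via uniqueness of the limit'' is not justified as stated, since minimizers of $H$ are far from unique (any $U$ with $BU=0$ is one); but the proposition, like the paper, only needs weak (subsequential) convergence to \emph{some} $\tilde U$ with $\|B\tilde U\|=0$, which weak lower semicontinuity of $H$ delivers once boundedness is in hand.
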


We prove this proposition in a series of lemmas, which summarize the results from \cite{osher2005}. The first assertion follows from \cref{l:minimizing}, which uses the properties of the Bregman iteration stated in  \cref{l:properties}. The second assertion follows once we show that the sequence of iterates is uniformly bounded in $X$, since this implies that the sequence converges weakly to a minimizer $\tilde{U}$ of $H(U)$. In particular, to show the boundedness of the sequence:
\begin{enumerate}
\item We notice first that by \cref{h:constraint} the sum $E(U) +H(U)$ is coercive. It then follows from standard arguments and Poincar\'e's inequality that there are constants $c_1>0, c_2 \in \R$ such that the norm $\|U \|_X \leq c_1 (E(U) + H(U)+ c_2)$.
\item Then, we may conclude from \cref{l:uniformbound} that $E(U^k) +H(U^k) \leq E(\tilde{U}) $ for all $k$.
\end{enumerate}

We start with some properties of the Bregman iteration. Here we use the notation $Q_k(U)$ to represent the functional corresponding to the $k$th Bregman iteration
\[Q_k(U) = E(U) - E(U^{k-1}) - \langle P^{k-1}, U-U^{k-1} \rangle + H(U).\]
The following results follow the analysis in Osher et al \cite{osher2005}.
\begin{lemma}\label{l:properties}
Given functionals $E$ and $H$ satisfying \cref{h:mainfunc,h:constraint}, the sequence $\{U_k\} \subset X$ generated by the corresponding Bregman iteration satisfies:
\begin{enumerate}
\item Monotonicity: $H(U_k) \leq H(U_{k-1})$
\item If $E(U)< \infty$ then
\[ D^{P^k}_E( U,U^k) + D^{P^{k-1}}_E(U^k,U^{k-1})+H(U^k)-H(U)< D^{P^{k-1}}_E(U,U^{k-1}). \]
\end{enumerate}
\end{lemma}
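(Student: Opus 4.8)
The plan is to derive both assertions directly from the variational characterization of the iterates together with the subgradient decomposition furnished by \cref{l:welldefined}. Recall that $U^k$ minimizes $Q_k(U) = D^{P^{k-1}}_E(U,U^{k-1}) + H(U)$, and that \cref{l:welldefined} supplies $P^k \in \partial E(U^k)$ and $R^k \in \partial H(U^k)$ with $P^{k-1} = P^k + R^k$, so that $R^k = P^{k-1} - P^k$.

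For monotonicity (item 1), I would simply test the minimality of $U^k$ against the previous iterate. Since $D^{P^{k-1}}_E(U^{k-1},U^{k-1}) = 0$, one has $Q_k(U^{k-1}) = H(U^{k-1})$, while $Q_k(U^k) = D^{P^{k-1}}_E(U^k,U^{k-1}) + H(U^k)$. The inequality $Q_k(U^k) \le Q_k(U^{k-1})$ then reads
\[
D^{P^{k-1}}_E(U^k,U^{k-1}) + H(U^k) \le H(U^{k-1}),
\]
and because $E$ is convex the Bregman distance $D^{P^{k-1}}_E(U^k,U^{k-1})$ is nonnegative, which gives $H(U^k) \le H(U^{k-1})$ at once.

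For item 2, the key is a bookkeeping identity for the three Bregman distances. Expanding each $D_E$ according to its definition and forming the combination $D^{P^{k-1}}_E(U,U^{k-1}) - D^{P^k}_E(U,U^k) - D^{P^{k-1}}_E(U^k,U^{k-1})$, every occurrence of $E$ cancels and one is left with the pure pairing $\langle P^{k-1} - P^k,\, U^k - U\rangle = \langle R^k,\, U^k - U\rangle$. Since $H(U) = \tfrac{\lambda}{2}\|BU\|^2$ is quadratic and $R^k = \lambda B^TBU^k \in \partial H(U^k)$, the exact second-order expansion gives
\[
\langle R^k,\, U^k - U\rangle = H(U^k) - H(U) + \tfrac{\lambda}{2}\|B(U - U^k)\|^2 .
\]
Substituting this back yields the sharpened identity $D^{P^{k-1}}_E(U,U^{k-1}) = D^{P^k}_E(U,U^k) + D^{P^{k-1}}_E(U^k,U^{k-1}) + H(U^k) - H(U) + \tfrac{\lambda}{2}\|B(U-U^k)\|^2$, from which item 2 follows, the inequality being strict precisely when $BU \ne BU^k$.

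The main obstacle is not conceptual but organizational: one must carry the subgradient labels $P^{k-1}, P^k, R^k$ through the expansion without sign errors and recognize the telescoping cancellation of the $E$-terms. The only genuine inputs beyond algebra are the convexity of $E$ (for nonnegativity of its Bregman distance) and the quadratic structure of $H$ (to turn the subgradient pairing into an exact identity), both guaranteed by \cref{h:mainfunc,h:constraint}.
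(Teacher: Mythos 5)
Your proof is correct and follows essentially the same route as the paper's: minimality of $U^k$ together with nonnegativity of the Bregman distance for item 1, and the telescoping three-distance identity combined with the decomposition $P^{k-1}=P^k+R^k$ from \cref{l:welldefined} for item 2. The only variation is at the final step of item 2, where you exploit the quadratic form of $H$ to obtain an exact identity with slack $\frac{\lambda}{2}\|B(U-U^k)\|^2$ rather than invoking the subgradient inequality for $H$ as the paper does; this is marginally sharper, since it pinpoints that the inequality is strict exactly when $BU\neq BU^k$.
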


\begin{proof}

To prove item 1) let $U^{k-1}$ and $U^k$ represent the minimizers of the $(k-1)$th and $k$th Bregman iterations, and let $P^{k-1}$ be an element in the subgradient of $E(U)$ evaluated at $U^{k-1}$. Then by applying the definition of subgradient to $P^{k-1}$ we see that,
\begin{align*}
 \langle P^{k-1} , U^k - U^{k-1} \rangle +E(U^{k-1})  \leq &   E(U^k) \\
 H(U^k) \leq  & E(U^k) - \langle P^{k-1} , U^k - U^{k-1} \rangle - E(U^{k-1})   + H(U^k)\\
 H(U^k) \leq &Q_k(U^k)  \leq Q_k(U^{k-1}) = H(U^{k-1}).
 \end{align*}
 
Where the second inequality holds because $U^k$ minimizes $Q_k(U^k)$.

To prove item 2) we use the definition of the Bregman distance to simplify the following expression
\begin{align*}
 D^{P^k}_E(U, U^k) -& D^{P^{k-1}}_E(U,U^{k-1}) + D^{P^{k-1}}_E(U^k,U^{k-1})  \\
 =& \; E(U) -E(U^k) - \langle P^k, U-U^k \rangle  + E(U^{k-1}) - E(U) \\
 &+ \langle P^{k-1}, U-U^{k-1} \rangle +  E(U^k) -E(U^{k-1}) - \langle P^{k-1}, U^k-U^{k-1} \rangle\\
  = & - \langle P^k, U-U^k \rangle + \langle P^{k-1}, U-U^{k-1} \rangle - \langle P^{k-1}, U^k-U^{k-1} \rangle\\
  =&  \langle P^{k-1}- P^k, U-U^k \rangle.
\end{align*}
From \cref{l:welldefined} we know that $P^{k-1} = P^k +R^k$ , with  $R^k \in \partial H(U^k)$. This allows us to simplify the expression further leading to
\[  D^{P^k}_E(U, U^k) - D^{P^{k-1}}_E(U,U^{k-1}) + D^{P^{k-1}}_E(U^k,U^{k-1})    =  \langle R^k, U-U^k \rangle \leq H(U) - H(U^k). \]
After a rearrangement this gives the desired result,
\[ D^{P^k}_E( U,U^k) + D^{P^{k-1}}_E(U^k,U^{k-1})+H(U^k)-H(U)< D^{P^{k-1}}_E(U,U^{k-1}). \]

\end{proof}

 This next proposition implies that the sequence of Bregman iterates $\{U^k\}$ is a minimizing sequence for $H(U)$.

\begin{lemma}\label{l:minimizing}
Suppose $E(U)$ and $H(U)$ satisfy \cref{h:mainfunc,h:constraint} and that $\tilde{U}$ is a minimizer of $H(U)$, with $E(\tilde{U})<\infty$. Then, the sequence $\{U_k\} \subset X$ generated by the Bregman iteration in \cref{t:algorithms} satisfies $$H(U^k) \leq H(\tilde{U}) + \frac{E(\tilde{U})}{k}.$$
\end{lemma}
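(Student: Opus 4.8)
The plan is to run a telescoping argument on the Bregman distances to the minimizer $\tilde U$, extracting the $1/k$ decay rate from the accumulated decrease of $H$ along the iteration. Set $a_k := D^{P^k}_E(\tilde U, U^k)$. The starting point is item 2 of \cref{l:properties}, which I would apply with the test point $U = \tilde U$ (legitimate since $E(\tilde U) < \infty$ by hypothesis). This yields the one-step recursion $a_k + D^{P^{k-1}}_E(U^k, U^{k-1}) + H(U^k) - H(\tilde U) \le a_{k-1}$. Because each $P^{j} \in \partial E(U^{j})$ (this is exactly the content of \cref{l:welldefined}) and $E$ is convex, every Bregman distance appearing here is nonnegative; in particular I can discard the term $D^{P^{k-1}}_E(U^k, U^{k-1}) \ge 0$ to obtain the cleaner inequality $a_{k-1} - a_k \ge H(U^k) - H(\tilde U)$.

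Next I would sum this inequality over $m = 1, \dots, k$. The left-hand side telescopes to $a_0 - a_k$, and since $a_k \ge 0$ it is bounded above by $a_0$, so $a_0 \ge \sum_{m=1}^k \big(H(U^m) - H(\tilde U)\big)$. To convert the sum into the single quantity $H(U^k)$ I invoke the monotonicity from item 1 of \cref{l:properties}: $H(U^k) \le H(U^m)$ for every $m \le k$, hence each summand is at least $H(U^k) - H(\tilde U)$ and the whole sum is at least $k\big(H(U^k) - H(\tilde U)\big)$. Finally I evaluate the initial term using the initialization $U^0 = 0$, $P^0 = 0$ and the normalization $E(0) = 0$, which give $a_0 = D^{P^0}_E(\tilde U, U^0) = E(\tilde U) - E(0) - \langle P^0, \tilde U \rangle = E(\tilde U)$. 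Combining these bounds yields $k\big(H(U^k) - H(\tilde U)\big) \le E(\tilde U)$, which is the claimed estimate after dividing by $k$.

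The step I expect to require the most care is the identification $a_0 = E(\tilde U)$, since this is where the particular initialization of the algorithm and the normalization of the energy $E$ enter; if one did not have $E(0) = 0$, the constant in the rate would instead read $E(\tilde U) - E(0)$. The only other point needing attention is the nonnegativity of the Bregman distances, which is not automatic for an arbitrary reference element but holds here precisely because \cref{l:welldefined} guarantees that $P^k$ is a genuine subgradient of the convex functional $E$ at $U^k$; this is what licenses both discarding $D^{P^{k-1}}_E(U^k,U^{k-1})$ and bounding $a_k$ from below by zero. Everything else is the standard telescoping and monotonicity bookkeeping.
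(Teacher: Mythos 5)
Your proposal is correct and follows essentially the same route as the paper: both apply item 2 of \cref{l:properties} with the test point $\tilde U$, telescope over $m=1,\dots,k$, discard the nonnegative Bregman distances, use the monotonicity $H(U^k)\le H(U^m)$, and identify $D^{P^0}_E(\tilde U,U^0)=E(\tilde U)$ from the initialization. The only (harmless) difference is that you drop the term $D^{P^{m-1}}_E(U^m,U^{m-1})$ before summing rather than after, and you are slightly more explicit than the paper about the normalization $E(U^0)=0$ needed for the last identification.
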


\begin{proof}
The result follows from adding item 2) in \cref{l:properties} for integers 1 through $k$: 
\begin{equation}\label{e:niceineq}
 D^{P^k}_E(\tilde{U},U^k) + \sum_{m=1}^k \left[ D^{P^{m-1}}_E(U^m,U^{m-1}) + H(U^m) - H(\tilde{U}) \right] \leq D^0(\tilde{U},U^0).
 \end{equation}
Using the monotonicity property, i.e. $H(U^m)\leq H(U^{m-1})$, we can replace $H(U^m)$ with $H(U^k)$ for all $m=1,2,\cdots, k$. In addition because $D^{P^{m-1}}_E(U^m, U^{m-1}) \geq 0$ the above inequality can be simplified to
\[ D^{P^k}_E(\tilde{U},U^k) +  k \left [H(U^k) - H(\tilde{U}) \right] \leq  D^0(\tilde{U},U^0) =E(\tilde{U}). \]
Lastly, because the Bregman distance is always nonnegative we can rearrange the terms in this last inequality to obtain the desired result $$H(U^k) \leq H(\tilde{U}) + E(\tilde{U})/k.$$
\end{proof}

\begin{Remark}\label{r:properties}
From the inequality \cref{e:niceineq} one also obtains the following properties for the sequence of Bregman iterates:
\begin{enumerate}
\setlength \itemsep{2ex}
\item $ \sum_{m=1}^k D^{P^{m-1}}_E(U^m,U^{m-1}) \leq E(\tilde{U} )$.\\

 Since in addition the $\min H(U) =0$ over $X$, we also have that 
 
\item $ \sum_{m=1}^k H(U^m) \leq E(\tilde{U}) $ as well as
\item $kH(U^k) \leq E(\tilde{U})$.
\end{enumerate}
\end{Remark}

In this next lemma we show that if the functionals $E$ and $H$ satisfy the above hypothesis and $\{U^k\}$ is a minimizing sequence, then sequence of values  $a_k = E(U^k) + H(U^k)$ is uniformly bounded . Since $\|U\|_X \leq c_1( E(U)+H(U) +c_2)$ for some constants $c_1>0, c_2 \in \R$, it follows that the minimizing sequence $\{U^k\}$ is uniformly bounded and therefore converges weakly to an element in $X$.

\begin{lemma}\label{l:uniformbound}
Suppose $E(U)$ and $H(U)$ satisfy \cref{h:mainfunc,h:constraint} and that $\tilde{U}$ is a minimizer of $H(U)$, with $E(\tilde{U})<\infty$. Then, the sequence $\{U_k\} \subset X$ generated by the Bregman iteration in \cref{t:algorithms} satisfies $$E(U^k) + H(U^k) \leq C E(\tilde{U}).$$
\end{lemma}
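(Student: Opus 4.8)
The plan is to bound $E(U^k)$ from above by comparing it with the minimizer $\tilde U$ through the subgradient $P^k\in\partial E(U^k)$, and then to control the resulting error term using the decay estimates for $H(U^k)$ already collected in \cref{r:properties}. The $H$-part of the claim is essentially free: by \cref{l:minimizing} together with $\min_X H=0$ from \cref{h:constraint} (which also forces $H(\tilde U)=0$, hence $\|B\tilde U\|=0$ and $B\tilde U=0$) we have $H(U^k)\le E(\tilde U)/k\le E(\tilde U)$. So the entire difficulty lies in estimating $E(U^k)$. For this, the first step is to use convexity of $E$ and $P^k\in\partial E(U^k)$ to write
\[ E(U^k)\le E(\tilde U)+\langle P^k,\,U^k-\tilde U\rangle. \]

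Next I would invoke the telescoped representation $P^k=-\sum_{m=1}^k R^m$ from the remark following \cref{l:welldefined}, where $R^m\in\partial H(U^m)$. Since $H(U)=\frac{\lambda}{2}\|BU\|^2$ is smooth, $R^m=\lambda B^T B U^m$, and using $B\tilde U=0$ each term collapses to $\langle R^m,\,U^k-\tilde U\rangle=\lambda\langle BU^m,\,BU^k\rangle$. The crux is that these cross terms are indefinite and cannot be controlled by their sign, so I would bound them by Cauchy--Schwarz, $-\langle BU^m,BU^k\rangle\le\|BU^m\|\,\|BU^k\|$, and convert norms to energies via $\|BU^m\|=\sqrt{2H(U^m)/\lambda}$. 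This yields
\[ \langle P^k,\,U^k-\tilde U\rangle\le 2\sqrt{H(U^k)}\sum_{m=1}^k\sqrt{H(U^m)}. \]

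The final step combines the two \emph{independent} estimates of \cref{r:properties}: item 3 gives $\sqrt{H(U^k)}\le\sqrt{E(\tilde U)/k}$, while item 2 together with the elementary inequality $\sum_{m=1}^k\sqrt{H(U^m)}\le\sqrt{k}\,(\sum_{m=1}^k H(U^m))^{1/2}$ gives $\sum_{m=1}^k\sqrt{H(U^m)}\le\sqrt{k}\,\sqrt{E(\tilde U)}$. The $\sqrt{k}$ growth of the sum is cancelled \emph{exactly} by the $1/\sqrt{k}$ decay of $\sqrt{H(U^k)}$, so the product is bounded by $2E(\tilde U)$ uniformly in $k$. Hence $E(U^k)\le 3E(\tilde U)$, and adding $H(U^k)\le E(\tilde U)$ gives $E(U^k)+H(U^k)\le 4E(\tilde U)$, so the lemma holds with $C=4$.

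The main obstacle is precisely this last coupling. The cross terms $\langle BU^m,BU^k\rangle$ carry no sign, so summing $k$ comparable contributions naively would only yield a bound growing linearly in $k$, and Cauchy--Schwarz applied in isolation still leaves a $\sqrt{k}$. The resolution requires using the summability of $\{H(U^m)\}$ and the $1/k$ rate of $H(U^k)$ \emph{at the same time}; only their interplay produces a $k$-independent constant, which is what makes $\{U^k\}$ uniformly bounded (and hence weakly convergent, as needed in \cref{prop:hminimizing}).
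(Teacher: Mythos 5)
Your proof is correct, and it takes a cleaner route than the paper's. The paper starts from item 1 of \cref{r:properties}, namely $\sum_{m=1}^k D^{P^{m-1}}_E(U^m,U^{m-1}) \leq E(\tilde{U})$, telescopes the $E$-terms, performs an Abel summation on the inner-product terms to bring in $P^{k-1}=-\sum_m R^m$, and then controls the resulting cross terms $\langle BU^m, BU^k\rangle$ with Young's inequality $2ab \leq a^2+b^2$, which produces the combination $kH(U^k) + \mathrm{const}\cdot\sum_m H(U^m)$, each piece bounded by $E(\tilde{U})$ via items 2 and 3 of the remark; this yields the constant $5$ (plus an $E(U^0)$ term that the paper silently drops). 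You instead bypass the telescoping and Abel summation entirely by applying the subgradient inequality once, directly at $U^k$ against $\tilde{U}$, and you handle the same cross terms with Cauchy--Schwarz plus the $\ell^1$--$\ell^2$ bound on the sum, so that the $\sqrt{k}$ growth of $\sum_m\sqrt{H(U^m)}$ cancels exactly against the $1/\sqrt{k}$ decay of $\sqrt{H(U^k)}$. Both arguments ultimately rest on the same two estimates from \cref{r:properties} and on $R^m=\lambda B^TBU^m$ together with $B\tilde{U}=0$, but yours is shorter, avoids the summation-by-parts bookkeeping (and does not even need item 1 of the remark), and gives the slightly better constant $C=4$ without any leftover $E(U^0)$. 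Your closing observation about why neither the sign of the cross terms nor Cauchy--Schwarz alone suffices, and why the two decay estimates must be used in tandem, correctly identifies the crux that both proofs share.
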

\begin{proof}
To show the result we use item 1) from \cref{r:properties}
\begin{align*}
E(\tilde{U}) \geq & \sum_{m=1}^k D^{P^{m-1}}_E(U^m,U^{m-1})\\
\geq &  \sum_{m=1}^k  \left( E(U^m) - E(U^{m-1}) - \langle P^{m-1}, U^m-U^{m-1} \rangle  \right) \\
\geq & E(U^k) - E(U^0) -  \sum_{m=1}^k  \langle P^{m-1}, U^m-U^{m-1} \rangle\\
\geq & E(U^k) - E(U^0) - \left(  \sum_{m=1}^k  \langle P^{m-1}, U^m-\tilde{U} \rangle - \langle P^{m-1}, U^{m-1}-\tilde{U} \rangle \right)   \\
\geq & E(U^k) - E(U^0) - \langle P^{k-1}, U^k - \tilde{U} \rangle +   \sum_{m=1}^{k-1}  \langle P^m - P^{m-1}, U^m-\tilde{U} \rangle.
\end{align*}
Using the results from \cref{l:welldefined}, $P^{m-1} = P^m + R^m$ and $P^k = - \sum_{m=1}^k R^m$ we can write
\begin{align*}
E(\tilde{U}) \geq & E(U^k) - E(U^0) +  \sum_{m=1}^{k-1}  \langle R^m, U^k - \tilde{U} \rangle -  \sum_{m=1}^{k-1} \langle R^m, U^m - \tilde{U} \rangle\\
 \geq & E(U^k) - E(U^0) +  \sum_{m=1}^{k-1}  \langle R^m, U^k \rangle -  \sum_{m=1}^{k-1} \langle R^m, U^m  \rangle.
\end{align*}
\end{proof}
Since $R^m \in \partial H(U^m) = \lambda B^*BU^m$ we have 
\begin{align*}
E(\tilde{U}) \geq & E(U^k) - E(U^0)+  \lambda  \sum_{m=1}^{k-1} \langle BU^m, BU^k \rangle - \lambda \sum_{m=1}^{k-1} \| BU^m \|^2\\
 \geq & E(U^k) - E(U^0) -  \frac{\lambda}{2}  \sum_{m=1}^{k-1} \left (\|BU^m\|^2 + \|BU^k\|^2 \right ) - \lambda \sum_{m=1}^{k-1} \|BU^m\|^2\\
 \geq & E(U^k) +H(U^k) - E(U^0) - k H(U^k) -3  \sum_{m=1}^{k-1} H(U^m).
\end{align*}

Since $\min_{U \in X} H(U) =0$, we can use \cref{r:properties} to obtain
\[ E(\tilde{U}) \geq  E(U^k) +H(U^k) - E(U^0) - 4E(\tilde{U}),\]
which yields the result of the lemma
\[ E(U^k) +H(U^k) \leq 5 E(\tilde{U}). \]

\subsection{ Convergence to solution of constrained problem}

We have shown that the sequence $\{ U_k\}$ of Bregman iterates is a minimizing sequence for $H(U) = \frac{\lambda}{2} \|BU\|^2$. In particular this implies that the sequence converges weakly to a function $U^* \in X$ with the property that $\|BU^* \| =0$. Because the Bregman iteration and the Error correcting algorithm are equivalent we also have that $U^*$ is a solution to an iterate of the latter. In this next proposition we further show that if $U^*$ is a solution to the Error Correcting algorithm which satisfies $\| BU^*\| =0$, then it must also be a solution to the original constrained problem
\begin{align}\label{e:constrained} 
&\min_{U \in X} E(U) \quad \mbox{subject to} \quad \|BU \| =0,\\ \nonumber
&\min_{(u,d) \in X} \overline{W}_1(d) + \tilde{V}(x,u)  \quad \mbox{subject to} \quad \|\partial_x u - d \| =0.
\end{align}
The proof we present here follows the analysis in \cite{goldstein2009}.
\begin{proposition}
Suppose the functionals $E(U)$ and $H(U)$ satisfy \cref{h:mainfunc,h:constraint}. Consider the Error Correcting algorithm stated in \cref{t:algorithms} and suppose an iterate $U^*$ satisfies $\| BU^*\| =0$. Then $U^*$ is a solution to the original constrained problem \cref{e:constrained}.
\label{prop:fixed}
\end{proposition}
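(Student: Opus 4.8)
The plan is to extract the subgradient optimality condition satisfied by the iterate $U^*$ and then feed it into the definition of a subgradient against an arbitrary competitor that satisfies the constraint. Since $U^*$ is by assumption an iterate of the Error Correcting algorithm, there is some index $k$ with
$$ U^* = \arg\min_U E(U) + \frac{\lambda}{2}\|BU - b^k\|^2. $$
The penalty $U \mapsto \frac{\lambda}{2}\|BU - b^k\|^2$ is convex and differentiable with derivative $\lambda B^T(BU - b^k)$ because $B$ is bounded and linear by \cref{h:constraint}; since $E$ is convex, the subdifferential sum rule applies and the first-order optimality condition reads
$$ 0 \in \partial E(U^*) + \lambda B^T(BU^* - b^k). $$

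First I would use the hypothesis $\|BU^*\| = 0$, i.e.\ $BU^* = 0$, to simplify this to $\lambda B^T b^k \in \partial E(U^*)$. Writing $p^* = \lambda B^T b^k$, feasibility of $U^*$ for the constrained problem \cref{e:constrained} is immediate, since $\|BU^*\| = 0$ is precisely the constraint.

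Next I would establish optimality. Let $V \in X$ be any competitor satisfying the constraint $\|BV\| = 0$, i.e.\ $BV = 0$. Applying the definition of the subgradient $p^* \in \partial E(U^*)$ gives
$$ E(V) \ge E(U^*) + \langle p^*, V - U^* \rangle = E(U^*) + \lambda \langle b^k, B(V - U^*) \rangle = E(U^*) + \lambda \langle b^k, BV - BU^* \rangle. $$
Because both $BV = 0$ and $BU^* = 0$, the last inner product vanishes, yielding $E(V) \ge E(U^*)$. As $V$ was an arbitrary feasible point, $U^*$ minimizes $E$ subject to the constraint, which is the assertion.

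The main obstacle I anticipate is not the inequality chain, which is short, but the justification of the optimality condition in the Banach-space, nonsmooth setting: $E$ contains the generically nondifferentiable convexified density $\overline{W}_1$, so one must work with subgradients throughout and verify that the sum rule $\partial\bigl(E + \tfrac{\lambda}{2}\|B\cdot - b^k\|^2\bigr)(U^*) = \partial E(U^*) + \lambda B^T(BU^* - b^k)$ holds. This is standard once one notes that the penalty term is everywhere differentiable, but it relies on $E$ being proper, convex and (through the coercivity in \cref{h:mainfunc}) such that both the minimizer $U^*$ and an element of $\partial E(U^*)$ actually exist — the existence being exactly what the earlier well-definedness analysis in \cref{l:welldefined} supplies.
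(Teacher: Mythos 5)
Your proof is correct, but it takes a genuinely different route from the paper's. The paper argues by direct comparison of objective values: since $U^*$ minimizes $E(U) + \frac{\lambda}{2}\|BU - b^*\|^2$ and any feasible competitor $\bar{U}$ satisfies $B\bar{U} = 0 = BU^*$, the penalty terms at $U^*$ and $\bar{U}$ are both equal to $\frac{\lambda}{2}\|b^*\|^2$ and cancel, giving $E(U^*) \le E(\bar{U})$ immediately — no subdifferential calculus at all. You instead extract the first-order condition $\lambda B^T b^k \in \partial E(U^*)$ and observe that this particular subgradient, lying in the range of $B^T$, annihilates every feasible direction $V - U^*$ (since $B(V-U^*)=0$), so the subgradient inequality collapses to $E(V) \ge E(U^*)$. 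Your route costs you the Moreau--Rockafellar sum rule in a Banach space (harmless here, since the quadratic penalty is continuous and differentiable everywhere, but it is an extra thing to justify); in exchange it hands you the explicit dual certificate $\lambda B^T b^k$ — the Lagrange-multiplier interpretation of the Bregman variable $b$ — which the paper's cancellation argument never exposes. One cosmetic note: the paper's comparison is phrased against a presumed solution $\bar{U}$ of \cref{e:constrained}, whereas your argument ranges over arbitrary feasible $V$ and so does not even implicitly presuppose that the constrained problem attains its minimum; in that small respect your formulation is cleaner.
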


\begin{proof}
Since $U^*$ 
is a fixed point for the Error Correcting algorithm there is a $b^*$ such that $$U^* = \mbox{ argmin}_{U \in X} E(U) + \frac{\lambda}{2} \| BU -b^*\|.$$
Suppose now that $\bar{U}$ is a solution to the original constrained problem \cref{e:constrained}, then $\|B\bar{U} \| =0$. Because $U^*$ also satisfies the same constrain, we obtain the following relation $\|BU^* -b^* \| = \| B\bar{U} - b^*\|$. We can now use this to show that $U^*$ is a solution to  \cref{e:constrained}. Indeed because $U^*$ is a minimizer of the Error Correcting functional we see that
 \begin{align*}
E(U^*) + \frac{\lambda}{2} \|BU^* -b^*\| &\leq E(\bar{U}) + \frac{\lambda}{2} \|B\bar{U} -b^*\|\\
E(U^*) &\leq E(\bar{U}).
\end{align*}
The last inequality shows that $U^*$ is also a minimizer for $E(U)$ and thus solves \cref{e:constrained}.

\end{proof}

\section*{Acknowledgments}
GJ acknowledges the support from the National Science Foundation through grants DMS-1503115 and DMS-1911742. SV was partially supported by the Simons Foundation through awards 524875 and 560103 and also partially supported by the NSF through award DMR-1923922. Portions of this work were carried out when SV was visiting the Center for Nonlinear Analysis at Carnegie Mellon University and the Oxford Center for Industrial and Applied Math. 

\bibliographystyle{siamplain}
\bibliography{ModBregman}
\end{document}